\newtheorem{theo}{Theorem}[section]
\newtheorem{lm}{Lemma}[section]
\newtheorem{df}{Definition}[section]
\newtheorem{rmk}{Remark}[section]
\newtheorem{proposition}{Proposition}[section]
\numberwithin{equation}{section}
\def\d{\delta}
\def\g{\gamma}
\def\eps{\varepsilon}
\def\R{{\mathbb R}}
\def\Z{{\mathbb Z}}
\def\N{{\mathbb N}}
\def\SS{{\mathcal S}}
\def\Sp{{\mathbb{S}}}
\def\Exp{\mathbb{E}}
\def\Pr{\mathbb{P}}
\def\1{{\mathbf 1}}
\def\essinf{\mathop{{\rm ess~inf}}}
\def\esssup{\mathop{{\rm ess~sup}}}
\def\eps{\varepsilon}
\newcommand{\as}{{ \ \textrm{a.s.}}}
\def\be{{\mathbf e}}
\def\bb{{\mathbf b}}
\def\bx{{\mathbf x}}
\def\by{{\mathbf y}}
\def\bu{{\mathbf u}}
\def\0{{\mathbf 0}}
\newcommand{\ud}{{\mathrm d}}
\newcommand{\F}{{\mathcal F}}
\newcommand{\toas}{\stackrel{{\rm a.s.}}{\longrightarrow}}
\title{Random walk with barycentric self-interaction}
\author{Francis Comets\footnote{Universit\'e Paris 7 (Diderot), Case courrier 7012,
2 Place Jussieu,
75251 Paris Cedex 05, France. E-mail: \texttt{comets@math.jussieu.fr}.}
\and Mikhail V. Menshikov\footnote{Department of Mathematical Sciences, University of Durham, South Road, Durham DH1 3LE, UK. E-mail:
\texttt{mikhail.menshikov@durham.ac.uk}.}
\and Stanislav Volkov\footnote{Department of Mathematics, University of Bristol, University Walk, Bristol BS8 1TW, UK. E-mail:
\texttt{s.volkov@bristol.ac.uk}.}
\and Andrew R. Wade\footnote{Department of Mathematics and Statistics, University of Strathclyde, 26 Richmond Street, Glasgow G1 1XH, UK. E-mail: \texttt{andrew.wade@strath.ac.uk}.}\ \footnote{Corresponding author. Tel:  +44 (0)141 548 3663. Fax: +44 (0)141 548 3345.}}
\begin{document}

\maketitle

\begin{abstract}
We study the asymptotic behaviour
of a $d$-dimensional self-interacting
random walk $(X_n)_{n \in \N}$ ($\N := \{ 1,2,3,\ldots \}$)
which is
repelled or attracted by the centre of mass $G_n = n^{-1} \sum_{i=1}^n X_i$
 of
its previous trajectory. The walk's trajectory  $(X_1,\ldots,X_n)$
models a random polymer chain
in either poor or good solvent.
In addition to some natural regularity conditions, we assume that the walk has one-step mean
drift
\[ \Exp [X_{n+1} - X_n \mid X_n - G_n = \bx] \approx \rho \|\bx\|^{-\beta} \hat \bx \]
for $\rho \in \R$ and $\beta \geq 0$. When $\beta <1$ and $\rho>0$,
we show that $X_n$ is transient with a limiting (random) direction
and satisfies a super-diffusive law of large numbers:
$n^{-1/(1+\beta)} X_n$ converges almost surely to some  random vector.
When $\beta \in (0,1)$ there is sub-ballistic rate of escape.
For $\beta \geq 0$, $\rho \in \R$ we give almost-sure bounds on the norms $\|X_n\|$, which in the context of
the  polymer model reveal   extended  and collapsed phases.

Analysis  of the random walk, and in particular of
$X_n - G_n$,
 leads to the study of real-valued
 time-inhomogeneous
non-Markov
processes $(Z_n)_{n \in \N}$
 on $[0,\infty)$
with  mean drifts of the form
\begin{equation}
\label{star}
 \Exp [ Z_{n+1} - Z_n \mid Z_n = x ] \approx \rho x^{-\beta} - \frac{x}{n},  \end{equation}
where $\beta \geq 0$ and $\rho \in \R$. The study of such processes is
a time-dependent
variation on a classical problem of Lamperti; moreover, they arise naturally in the
context of the distance of simple random walk on $\Z^d$ from its centre of mass,
for which we also give an apparently new result.
We give a recurrence classification and asymptotic theory for processes $Z_n$
satisfying (\ref{star}), which enables us to deduce
the complete recurrence classification (for any $\beta \geq 0$)
of $X_n - G_n$ for our self-interacting walk.
\end{abstract}

\smallskip
\noindent
{\em Keywords:} Self-interacting random walk;
self-avoiding walk; random walk avoiding its convex hull; random polymer;
centre of mass; simple random walk; random walk average; limiting direction; law of large numbers. \/

\noindent
{\em AMS 2010 Subject Classifications:} 60J05 (Primary) 60K40; 60F15; 82C26 (Secondary)

  \section{Introduction}
  \label{sec:intro}

  We study a  self-interacting
  random walk.
  Self-interacting random processes,
 in which the stochastic behaviour  depends on the entire previous history
  of the process,
  present many challenges for mathematical
  analysis (see e.g.~\cite{blr,pemantle} and references therein)
  and are often motivated by real
  applications.

  Although not a random process in the same sense,  the {\em self-avoiding walk} is
  a  prototypical example
of a self-interacting random walk that gives rise to important and difficult problems.
  Random self-avoiding
   walks were introduced to
  model the configuration of
  polymer molecules in solution.
  The sites visited by the walk represent
  the locations of the polymer's constituent monomers;
  successive monomers are viewed as connected
  by chemical bonds.
  The classical self-avoiding walk
  (SAW) model
  takes uniform measure on $n$-step self-avoiding
  paths in $\Z^d$. In the important
  cases of $d \in \{2,3\}$, there are still
 major open problems for such walks: see for example
   \cite{lsw,ms} and \cite[Chapter 7]{hughes}, or \cite[Chapter 7]{rg}
   for a mathematical physics perspective.

   The {\em loop-erased random walk} (LERW), obtained by erasing chronologically the
loops of a random walk, was introduced in \cite{lawler} to study SAW, but it was
soon realized that the two processes belong to different universality classes.
For its independent interest, including applications to combinatorics and
quantum
field physics, LERW has received considerable attention
and now there is a more precise picture of its behaviour, which shows fine dependence on the spatial dimension.
In the planar case, the mean number of steps for LERW stopped at distance $n$ is of order
$n^{5/4}$ \cite{kenyon}, and the scaling limit is conformally invariant,
described by the radial Schramm--Loewner
evolution with parameter 2 \cite{lsw2}.

   A different perspective on polymer models concerns directed
polymers, where the self-interaction is reduced to a trivial form
but interesting phenomena arise from the interaction with the
medium: see \cite{giac,holl} for recent surveys for localization on interfaces (pinning,
wetting) possibly with time-inhomogeneities (e.g.~copolymers), and
 \cite{CSY04} for interactions with a time-space inhomogeneous medium
leading to localization in the bulk.

In the standard framework,  SAW cannot be interpreted as a
dynamic (or progressive) stochastic  process. There have been many attempts
to formulate genuine stochastic processes with similar
behaviour   to that of, or at least conjectured for, SAW.
A recent
model  is the random walk on $\R^2$   which
at each step avoids the convex hull of   its preceding values
\cite{abv,zern}.   Unlike the conjectured behaviour   of SAW, this
model is ballistic (see  \cite{abv,zern}), i.e.,   it has a
positive speed.
   The discrete version on $\Z^2$,
the dynamic prudent walk, has been studied in \cite{BFV09}: it is ballistic with speed
$3/7$ (in the $L^1$ norm), but, in contrast to the (conjecture for the)   continuous
model, it does not have a fixed direction (see \cite{BFV09}).
Ballisticity is known for other types
of self-interacting random walks: see \cite{chayes,iv}.

  In this paper we consider a self-interacting random walk model
  that is a tractable alternative to  SAW, and is distinguished
   from the models \cite{abv,BFV09,zern,chayes,iv} by exhibiting
   a range of possible scaling behaviour, including
   sub-ballisticity (i.e., zero speed) and super-diffusivity.
      Our model is tunable, with parameters that in principle can be estimated from real
data, and it can be used to represent polymers
     in the extended phase (for good solvent)
   or collapsed phase (poor solvent).
   The self-interaction in the model at time $n$ is mediated through the {\em barycentre}
   or {\em centre of mass} of the past trajectory  until time $n$.
   Specifically,
   our random walk will at each step have a mean drift
   (typically asymptotically zero in magnitude)
   pointing away from or towards
   the average of all previous positions. We now informally describe the probabilistic model;
    we give
  a brief description of the motivation and interpretation
  arising from polymer physics in Section \ref{poly}.

Let $d \in \N := \{1,2,3,\ldots\}$.
 Our random walk will be a discrete-time stochastic
  process $X = (X_n)_{n \in \N}$ on $\R^d$.
For $n \in \N$,
  set
  \begin{equation}
\label{com}
   G_n := \frac{1}{n} \sum_{i=1}^n X_i,
   \end{equation}
  the centre of mass (average)
  of $\{X_1,\ldots,X_n\}$.
  In addition to some regularity conditions on $X$
  that we describe later, our main assumption will be that
  the one-step mean drift of the walk after $n$ steps
   is of order $\| X_n - G_n \|^{-\beta}$
  in the direction  $\pm(X_n - G_n)$, where $\beta \geq 0$ is a fixed parameter;
  here and subsequently $\| \cdot \|$ denotes the Euclidean norm on $\R^d$.
Loosely speaking for the moment, we will
  suppose
   that for some $\rho \in \R$ and $\beta \geq 0$,
 \begin{equation}
 \label{drift0}
    \Exp [ X_{n+1} - X_n \mid X_n - G_n = \bx ] \approx \rho \| \bx \|^{-\beta} \hat \bx ,
    \end{equation}
 for any $n \in \N$ and $\bx \in \R^d \setminus \{ \0\}$,
 where $\hat \bx := \bx/\| \bx \|$ denotes a unit vector in the $\bx$-direction
 and $\0$ is the origin in $\R^d$. We attach no precise meaning to `$\approx$' in
 (\ref{drift0}) (or elsewhere); it indicates that we are ignoring some terms and
 also that we have not yet formally defined all the terms present.
  We describe the model
 formally and in detail in Section \ref{sec:model} below.

 The natural case of our model to compare to the walk that avoids its  convex hull \cite{abv,zern}
 has $\beta =0$ and $\rho >0$, when our walk has positive
 drift away from its current centre of mass. In our $\beta =0$, $\rho >0$ setting we   show that
 the walk has  an asymptotic speed and an asymptotic direction,
 properties which are conjectured but not yet proved for the walk avoiding its convex hull
 \cite{abv,zern}. Our results however cover much more than this special case. For example, the case of our model that we might expect to be in some sense
 comparable to SAW in $d=2$ has $\beta = 1/3$, $\rho >0$: see
 the
 discussion in Section \ref{poly} below.

 To give a flavour of our more general results,   described in more detail in  Section \ref{results} below,
 we now informally describe our results in
 the case where (\ref{drift0}) holds with $\rho>0$ and $\beta \in [0,1)$. Under
 suitable regularity conditions, we show that $X$
 is transient, i.e.~$\| X_n \| \to \infty$ a.s., and moreover
 we prove a strong law of large numbers that precisely
 quantifies this transience: $n^{-1/(1+\beta)} \| X_n \|$ is asymptotically
 constant, almost surely. In addition, we show that $X_n$ has a limiting direction,
 that is, $X_n / \| X_n \|$ converges a.s.~to some (random) unit vector.
 Thus we have, in this case, a rather complete picture of the asymptotic
 behaviour of $X_n$. For other regions of the $(\rho,\beta)$ parameter
 space we have other results, although we also leave some interesting
 open problems.

  The self-interaction in the model
  is introduced via the presence of $G_n$
  in (\ref{drift0}). If the condition $\{X_n - G_n = \bx\}$
  in (\ref{drift0}) is replaced by $\{ X_n = \bx\}$ then there
  is no self-interaction in the drift, which instead points away from a fixed origin.
  Such non-homogeneous `centrally biased' walks were
  studied by Lamperti in \cite[Section 4]{lamp1} and \cite[Section 5]{lamp3}; for
  more recent work see e.g.~\cite{flp,mmw,superlamp}.
Considering the
  process of norms $Z_n = \| X_n \|$ leads
  to a  process on $[0,\infty)$ with mean drift
   \begin{equation}
 \label{drift0a}
    \Exp [ Z_{n+1} - Z_n \mid Z_n  = x ] \approx \rho' x^{-\beta},
    \end{equation}
    ignoring higher-order terms.
  Such `asymptotically zero-drift' processes are of independent interest;
  the  asymptotic analysis of such (not necessarily Markov)
  processes
  is sometimes known as {\em Lamperti's problem} following pioneering work
of Lamperti \cite{lamp1,lamp2,lamp3}. From the point of view of the recurrence
classification of processes satisfying (\ref{drift0a}), the
case $\beta=1$ turns out to be critical, in which case the value of $\rho' \in \R$
is crucial: we give a brief summary of the relevant background in Section \ref{sec:lampsrw} below.

  We shall see below that considering the
  process  $Z_n = \| X_n -G_n \|$ with $X_n$ satisfying
   (\ref{drift0})
  leads to a more complicated form of (\ref{drift0a}).
 Loosely speaking, we will obtain
   \begin{equation}
 \label{drift0b}
    \Exp [ Z_{n+1} - Z_n \mid Z_n  = x ] \approx \rho' x^{-\beta} - \frac{x}{n}.
    \end{equation}
  We note that the two terms
    on the right-hand side of (\ref{drift0b}) are typically
    of the same order, as can be predicted by solving the corresponding differential equation,
     and so both contribute to the asymptotic behaviour.

 Comparing (\ref{drift0b}) with (\ref{drift0a}), we see that
  the drift is now {\em time}- as well as space-dependent.
 (A different variation on (\ref{drift0a})
  with this property was studied
  in \cite{mv2}, where processes with drift $\rho x^\alpha n^{-\beta}$ were considered.)
  Thus (\ref{drift0b}) is an interesting starting point for analysis
  in its own right. Additional motivation for (\ref{drift0b})
  arises naturally from simple random walk (SRW) and its centre of mass: if $Z_n = \| X_n - G_n \|$ where $X_n$ is a symmetric
 SRW on $\Z^d$ and $G_n$ its centre-of-mass
  as defined by (\ref{com}), $Z_n$ satisfies (\ref{drift0b}) with $\beta=1$
  and $\rho' = \rho' (d)$; see Section \ref{sec:com} below.

  Let us step back from the general setting for a moment
  to state one consequence of our results,
  which is a (seemingly new) observation  on SRW:

    \begin{theo}
    \label{srwthm}
    Let $d \in \N$.
  Suppose that $(X_n)_{n \in \N}$ is a symmetric SRW on $\Z^d$, and
  $(G_n)_{n \in \N}$ is its centre-of-mass process
  as defined by (\ref{com}).
  Then
  \begin{itemize}
  \item[(a)] $\liminf_{n \to \infty} \| X_n - G_n \| < \infty$ a.s.~for $d \in \{1,2\}$;
  \item[(b)] $\lim_{n \to \infty} \| X_n - G_n \| = \infty$ a.s.~for $d \geq 3$.
  \end{itemize}
    \end{theo}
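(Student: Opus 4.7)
The plan is to reduce the statement to the recurrence/transience classification for processes satisfying (\ref{star}) with $\beta=1$ that is developed in the body of the paper. Write $Y_n := X_n - G_n$, $Z_n := \|Y_n\|$, and $D_n := X_{n+1} - X_n$ for the (uniform nearest-neighbour) SRW increment, which is independent of $\F_n := \sigma(X_1,\ldots,X_n)$. Using $G_{n+1} - G_n = (X_{n+1} - G_n)/(n+1)$, I would first derive the clean recursion
\begin{equation*}
Y_{n+1} = \frac{n}{n+1}(Y_n + D_n).
\end{equation*}
Although $Z_n$ on its own is not Markov, $(X_n, G_n)$ is, and this is enough to carry out drift computations conditionally on $\F_n$.

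Next, using $\Exp[D_n \mid \F_n] = \0$, $\|D_n\| = 1$, and $\mathrm{Cov}(D_n) = d^{-1} I$, a Taylor expansion of $\|Y_n + D_n\|$ for $Z_n$ large gives, after multiplication by $n/(n+1)$,
\begin{equation*}
\Exp[ Z_{n+1} - Z_n \mid \F_n ] = \frac{d-1}{2d}\, Z_n^{-1} - \frac{Z_n}{n+1} + O(Z_n^{-3}),
\end{equation*}
and $\Exp[(Z_{n+1}-Z_n)^2 \mid \F_n] = d^{-1} + O(Z_n^{-2})$. Thus $Z_n$ lies in the framework of (\ref{star}) with $\beta = 1$, $\rho = (d-1)/(2d)$, and asymptotic increment variance $\sigma^2 = 1/d$. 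The verification of the auxiliary regularity hypotheses (uniformly bounded increments, non-degenerate variance, controlled higher moments) is immediate for SRW.

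I would then invoke the classification theorem for (\ref{star}) at $\beta = 1$. Following the classical Lamperti picture, the relevant critical parameter is $2\rho/\sigma^2 = d-1$: on the natural scale $Z_n \sim \sqrt{n}$, the deterministic drift $(d-1)/(2dZ_n) - Z_n/n$ is asymptotically inward when $d-1 \le 1$ and asymptotically outward otherwise. For $d \in \{1,2\}$ this yields recurrence of $Z_n$ in the sense of (a), i.e.~$\liminf_{n\to\infty} Z_n < \infty$ almost surely; for $d \geq 3$ the classification gives transience $Z_n \to \infty$ almost surely, which is (b).

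The main obstacle is the critical dimension $d = 2$, where $2\rho = \sigma^2$ and the standard $\beta=1$ Lamperti test is inconclusive: it is precisely the time-inhomogeneous term $-x/n$ in (\ref{star}) that tips the balance and forces recurrence, so one must appeal to the refined analysis of (\ref{star}) rather than to classical results. A minor secondary issue is that because $G_n$ is typically non-integer, $Z_n$ never reaches zero; one therefore has to phrase recurrence via return to bounded sets $\{Z_n \le K\}$ for some $K$ rather than to a single point, which is exactly the $\liminf$ form appearing in part (a).
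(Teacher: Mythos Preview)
Your proposal is correct and follows essentially the same approach as the paper: compute the drift and second moment of $Z_n = \|X_n - G_n\|$ from the recursion $Y_{n+1} = \tfrac{n}{n+1}(Y_n + D_n)$ (this is the paper's Lemma~\ref{srwlem}), then invoke the $\beta=1$ classification (Theorems~\ref{thm1} and~\ref{thm2}) with $\rho' = (d-1)/(2d)$ and $s^2 = 1/d$, giving transience iff $d>2$. Your remark on the borderline case $d=2$ is exactly the point: Theorem~\ref{thm1} covers the equality $2\rho' = s^2$, and it is the extra $-1/(n\log Z_n)$ contribution that makes the Lyapunov function $\log\log Z_n$ a strict supermartingale there.
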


  P\'olya's recurrence theorem says that $X_n$ is recurrent in $d \leq 2$
  and transient in $d \geq 3$, while results of Grill \cite{grill}
  say that the centre-of-mass process $G_n$ is recurrent only in $d=1$
  and transient for $d \geq 2$. Thus the asymptotic behaviour of
  $X_n - G_n$ is not trivial; Theorem \ref{srwthm} says that it is recurrent
  if and only if $d \in \{1,2\}$.
  In particular
 when $d=2$, $X_n$ and $X_n - G_n$ are both
 recurrent, but $G_n$ is transient;
 see Figure \ref{fig2} for a simulation.

 \begin{rmk}
Theorem \ref{srwthm}   exhibits an amusing feature. With the notation $\Delta_n := X_{n+1} - X_n$
   it is not hard to see from (\ref{com}) that we may write (with $X_0 := \0$)
   \[ G_n = \sum_{i=0}^{n-1} \left( 1 - \frac{i}{n} \right) \Delta_i ; ~~~
   X_n - G_n =  \sum_{i=0}^{n-1} \left( \frac{i}{n} \right) \Delta_i .\]
   It follows that (for {\em fixed} $n$) $X_n-G_n$ and $G_n$ are very nearly {\em time-reversals} of each other:
   writing $\Delta'_i := \Delta_{n-i}$ we see that
   \[ X_n - G_n = \sum_{i=1}^{n} \left(1 - \frac{i}{n} \right) \Delta'_i .\]
   Despite this, the two {\em processes} behave very differently,
   as can be seen by contrasting Theorem \ref{srwthm} with Grill's result \cite{grill}.
 \end{rmk}

 It is natural to ask whether a continuous analogue of Theorem \ref{srwthm}
 holds. In the one-dimensional case, we would take $B_t$ to be standard Brownian motion
 and $G_t = t^{-1} \int_0^t B_s \ud s$, and ask about the joint behaviour of
 $(B_t , G_t)$; in higher dimensions, writing the $d$-dimensional Brownian motion
 as $(B^{(1)}_t, \ldots, B^{(d)}_t)$, the $i$th component $G^{(i)}_t$ of $G_t$ is
 $t^{-1} \int_0^t B^{(i)}_s \ud s$, and different components are independent.
 We could not find a Brownian analogue of Grill's theorem for (compact set)
 recurrence/transience of $G_t$ explicitly stated
 in the literature. The process $(t G_t)_{t \geq 0}$ is
 {\em integrated Brownian motion}, or the {\em Langevin process}, see e.g.\
 \cite{ac,iw} and references therein. The two-dimensional process
 $(B_t, tG_t)_{t \geq 0}$ is   the {\em Kolmogorov diffusion}
 \cite{iw}. Theorem \ref{srwthm} gives basic information about the
 joint behaviour of a discrete version of this process, under a re-scaling of the
 second coordinate.

\begin{figure}[h!]
\begin{center}
\includegraphics[angle=0, width=0.8\textwidth]{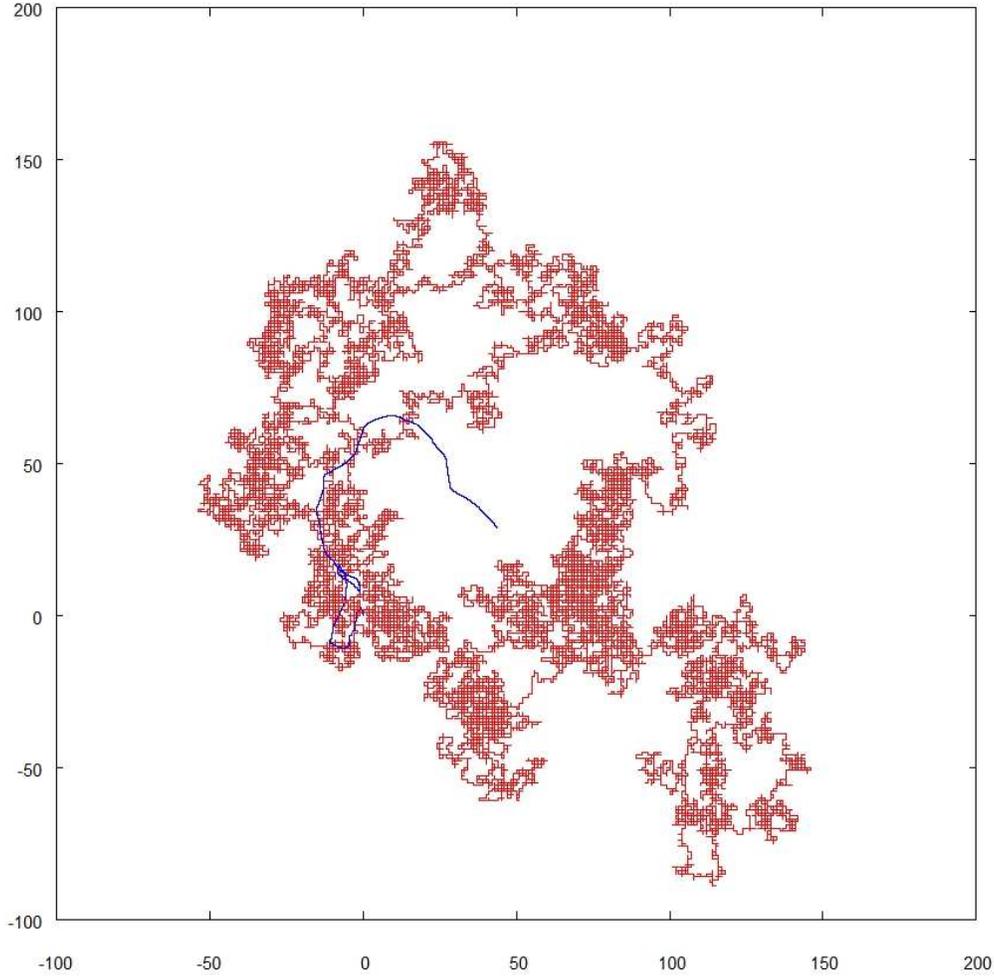}
\end{center}
\caption{Simulation of $4 \times 10^4$ steps of symmetric
SRW starting at the origin of $\Z^2$ (path shown in red) and its centre of mass
process (blue).}
\label{fig2}
\end{figure}

      In Section \ref{sec:model} we formally define our self-interacting random walk
  and state our main results.
  In Section \ref{sec:motiv} we discuss some more of the motivation
  behind
  our model (coming from the physics of polymers and also
  purely theoretical considerations)  and also the one-dimensional problems
  associated with (\ref{drift0a}) and (\ref{drift0b}), and
  explain how SRW (and Theorem \ref{srwthm})
  fits into our picture. The subsequent sections are devoted to the proofs.

  We finish this section with some comments
  on the relation of our model to the existing
  literature. We are not aware of any self-interacting
  random walk models similar to the one studied here (i.e.,
  interacting with the previous history of the process, as
  summarized through the barycentre).
  In broad outline, our model is related to
  the vertex-reinforced random walk (see \cite[Section 5.3]{pemantle})
  in that the evolution of the walk depends on the sites
  previously visited. A significant difference is that in vertex-reinforced random walk
  this self-interaction is local, in that only the occupation of
  nearest-neighbours of the current
  site affects the law of the increment, whereas our interaction,
  mediated by the barycentre, is global.
  In the continuous setting, self-interacting
  diffusions (or `Brownian polymers') with similar
  flavour and motivation to those of our model
  have also been studied over the last two decades or so, but
  are rather different in
  detail to the model considered here: see e.g.~\cite{nrw,dr,mt,blr} and
  references therein;
  some recent work on processes with self-attracting drift defined through a potential
 includes \cite{ck}.
   In the self-interacting
  diffusion setting, most of the results in the literature are concerned with the ergodic case;
  questions of recurrence/transience seem to have received little attention
  (particularly  in dimensions greater than 1), and we do not know of any
  results on asymptotic directions. Also, it is typically assumed that
  the vector consisting of the process and its empirical average are Markovian, whereas our model
  is more general. See \cite[Section 1]{mt} for a short survey.

 \section{The model and main results}
 \label{sec:model}

 \subsection{Definitions and assumptions}

We now define the stochastic
 process $X := (X_n)_{n \in \N}$ on $\R^d$ ($d \in \N$)
 that is our main object of study.
 (We start at time $n=1$ only so that (\ref{com}) has the neatest form.)
  The process $X$ will not be Markovian, as   the distribution of
  $X_{n+1}$ will depend on the entire history $X_1, \ldots, X_{n}$,
  although to a large extent this dependence will be mediated through
  the current centre of mass
 $G_n$ defined at (\ref{com}). Formally,
 we suppose that $(X_n)_{n \in \N}$ is adapted to the filtration
 $(\F_n)_{n \in \N}$;
 note that by (\ref{com}) $G_1,\ldots,G_n$ are $\F_n$-measurable.
  We use the notation $\Pr_n [ \, \cdot \,  ]  := \Pr [ \, \cdot \mid \F_n ]$
 and $\Exp_n[\, \cdot \, ] := \Exp [ \, \cdot \mid \F_n ]$.
 Throughout the paper we understand $\log x$ to mean $\log x$ if $x \geq 1$ and $0$ otherwise.

  We impose some specific assumptions on the law of
  $\Delta_n := X_{n+1} - X_n$ given $\F_n$.
We assume  that
for some $B \in (0,\infty)$ and all $n \in \N$,
  \begin{equation}
  \label{bound}
    \Pr_n [ \| \Delta_n \| \leq B   ] =1, \as.\end{equation}
    The assumption of uniformly bounded jumps can be replaced by an assumption on higher
    order moments at the expense of additional technical complications, but (\ref{bound})
    is  natural  when   the increments  represent  chemical bonds
    in a model for a polymer molecule.

        Our next assumption will be a precise
    version of (\ref{drift0}).
    We suppose that for some $\rho \in \R$ and $\beta \geq 0$,
    for any $n \in \N$,
       writing  $\bx = X_n - G_n $ for convenience,
 \begin{equation}
 \label{drift}
    \Exp_n [ \Delta_n  ] = \rho \| \bx \|^{-\beta} \hat \bx
    + O ( \| \bx \|^{-\beta} (\log \| \bx \|)^{-2} ), \as, \end{equation}
as $\| \bx \| \to \infty$,
 where $\hat \bx := \bx/\| \bx \|$.
 (In (\ref{drift}) the exponent $-2$ on the logarithm is chosen for simplicity;
 it could be replaced with any exponent strictly less than $-1$.)
 In equation (\ref{drift}) and similar vector equations in the sequel,
 terms such as $O( \, \cdot\,)$ indicate the presence of a vector
 whose norm satisfies the given $O(\, \cdot\,)$ asymptotics (similarly for $o(\,\cdot\,)$);
 error terms not involving $n$ are understood to be uniform in $n$. To be clear,
 (\ref{drift}) is to be understood as, with $X_n - G_n = \bx$,   as $\| \bx \| \to \infty$,
 \begin{align*}
  \sup_{n \in \N} \esssup  \| \Exp_n [ \Delta_n  ] - \rho \| \bx \|^{-\beta} \hat \bx \|
& = O ( \| \bx \|^{-\beta} (\log \| \bx \|)^{-2} ) . \end{align*}

  We also need to assume a uniform ellipticity condition, to ensure
  that our random walk does not get `trapped' in some subset of $\R^d$.
  Let $\Sp_d := \{ \be \in \R^d : \| \be \| =1 \}$ denote
  the unit-radius sphere in $\R^d$. We suppose that there
  exists $\eps_0>0$ such that
  \begin{align}
  \label{ue}
   \essinf_{\be \in \Sp_d}
    \Pr_n [ \Delta_n \cdot \be
    \geq \eps_0   ] \geq \eps_0.
  \end{align}

  Write $\Delta_n = (\Delta_n^{(1)}, \ldots, \Delta_n^{(d)})$
  in Cartesian components.
  An immediate consequence of (\ref{ue}) is the
  following   lower bound on  second moments: a.s.,
  \begin{equation}
  \label{var}
  \min_{i \in \{1,\ldots,d\}}
  \Exp_n [ (\Delta_n^{(i)})^2   ] \geq 2 \eps_0^3 > 0 .\end{equation}
 Our primary standing assumption will be the following.

   \begin{itemize}
   \item[(A1)] Let $d \in \N$. Let $X := (X_n)_{n \in \N}$
   be a stochastic process on $\R^d$ and $G:= (G_n)_{n \in \N}$
   its associated centre-of-mass process defined by (\ref{com}).
   For definiteness, take $X_1 \in \R^d$ to be fixed.
   Suppose that for some $B < \infty$, $\eps_0 >0$, $\rho \in \R$, and $\beta \geq 0$
   the conditions (\ref{bound}), (\ref{drift}), and (\ref{ue})  hold.
   \end{itemize}

In the examples discussed later (see Section \ref{sec:exam}), $(X_n,G_n)_{n \in \N}$
will be a Markov process, but we do not assume the Markov property in general.

   When $\beta =1$, as in the Lamperti case \cite{lamp1,lamp3}
   the value of $\rho$ in (\ref{drift})
  will turn out to be crucial. As in Lamperti's problem, the recurrence classification
   depends on the relationship between $\rho$ and the covariance structure
   of $\Delta_n$. To obtain an explicit criterion, we  impose additional regularity conditions on that covariance structure.
  Specifically, we sometimes
   suppose that (a)   there exists $\sigma^2 \in (0,\infty)$ such that,
    a.s.,
   \begin{equation}
   \label{cov1} \Exp_n [ ( \Delta_n^{(i)})^2  ] = \sigma^2 + o((\log \| X_n - G_n \| )^{-1}), ~~~( i \in \{1,\ldots,d\});\end{equation}
and (b) for $i, j$ distinct elements of $\{1,\ldots,d\}$,   a.s.,
   \begin{equation}
   \label{cov2} \Exp_n [ \Delta_n^{(i)} \Delta_n^{(j)} ] = o((\log \| X_n - G_n \| )^{-1}).\end{equation}
   Thus for $\beta \geq 1$,
   when necessary we will impose the following additional assumption.

    \begin{itemize}
   \item[(A2)] The conditions (\ref{cov1}) and (\ref{cov2})  hold for some $\sigma^2 \in (0,\infty)$.
   \end{itemize}

   \subsection{Results on self-interacting walk}
   \label{results}

   Our first result, Theorem \ref{ythm1}, constitutes the first part of our
   complete recurrence classification for $X_n - G_n$. Since we are dealing with non-Markovian
   processes, we first formally define what we mean by recurrence and transience in this context.

   \begin{df}
   \label{def1}
   An $\R^d$-valued stochastic process $(\xi_n)_{n \in \N}$ is said to be
   {\em recurrent} if $\liminf_{n \to \infty} \| \xi_n \| < \infty$ a.s.~and
   {\em transient} if $\lim_{n \to \infty} \| \xi_n \| = \infty$ a.s..
   \end{df}

      Define
  \begin{equation}
  \label{rho0}
   \rho_0 := \rho_0 (d,\sigma^2) := \frac{1}{2} (2-d) \sigma^2  .\end{equation}

  \begin{theo}
  \label{ythm1}
   Suppose that (A1) and (A2) hold
   with $d \in \N$, $\beta \geq 1$, and $\rho \in \R$.
   \begin{itemize}
   \item[(i)] Suppose that $\beta =1$.
  Let $\rho_0 = \rho_0 (d,\sigma^2)$ be as defined at (\ref{rho0}).
   Then $X_n - G_n$ is recurrent if $\rho \leq \rho_0$ and
     transient if $\rho > \rho_0$.
   \item[(ii)] Suppose that $\beta > 1$. Then $X_n - G_n$ is
   recurrent if $d \in \{1,2\}$ and transient if $d \geq 3$.
   \end{itemize}
  \end{theo}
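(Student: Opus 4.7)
The plan is to reduce both parts of the theorem to a one-dimensional problem by studying the scalar process $Z_n := \|X_n - G_n\|$. The paper has advertised a recurrence classification for processes of the form (\ref{star}), so the whole game is to verify that $Z_n$ fits that framework with the correct coefficient and then to read off the critical threshold.

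First I would expand the increment of $Y_n := X_n - G_n$. A direct computation from (\ref{com}) gives $Y_{n+1} - Y_n = \frac{n}{n+1}\Delta_n - \frac{1}{n+1} Y_n$, so $Y_n$ is driven by the one-step increment $\Delta_n$ together with a mean-reverting pull of magnitude $Y_n/(n+1)$. Squaring and taking conditional expectations, using (\ref{drift}) to handle $\Exp_n[\Delta_n]$ and (A2) to give $\Exp_n[\|\Delta_n\|^2] = d\sigma^2 + o(1)$ and $\Exp_n[(\hat Y_n \cdot \Delta_n)^2] = \sigma^2 + o(1)$, yields
\[
\Exp_n[Z_{n+1}^2 - Z_n^2] = \frac{2n\rho}{n+1} Z_n^{1-\beta} - \frac{2Z_n^2}{n+1} + d\sigma^2 + \text{errors}.
\]
A second-order Taylor expansion of the norm and the identity $Z_{n+1}^2 - Z_n^2 = 2 Z_n(Z_{n+1}-Z_n) + (Z_{n+1}-Z_n)^2$, together with $\Exp_n[(Z_{n+1}-Z_n)^2] = \sigma^2 + o(1)$, then converts this into the scalar mean drift
\[
\Exp_n[Z_{n+1}-Z_n] = \rho\, Z_n^{-\beta} + \frac{(d-1)\sigma^2}{2} Z_n^{-1} - \frac{Z_n}{n} + \text{errors}.
\]

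The key observation is then that for $\beta \geq 1$ this collapses to the critical form (\ref{star}) with exponent $1$. When $\beta = 1$ the effective coefficient of $Z_n^{-1}$ is $\rho + (d-1)\sigma^2/2$; when $\beta > 1$ the term $\rho Z_n^{-\beta}$ is absorbed into the error and the coefficient is $(d-1)\sigma^2/2$. In both cases the one-step variance of $Z_n$ is $\sigma^2 + o(1)$. Invoking the paper's recurrence classification for processes satisfying (\ref{star}) with $\beta = 1$, for which the critical threshold (as in Lamperti's classical criterion, unaffected by the $-x/n$ perturbation at the threshold level) is $\sigma^2/2$, gives: recurrence iff $\rho + (d-1)\sigma^2/2 \leq \sigma^2/2$ in the case $\beta=1$ — that is $\rho \leq (2-d)\sigma^2/2 = \rho_0$ — and iff $(d-1)\sigma^2/2 \leq \sigma^2/2$ in the case $\beta>1$, that is $d \in \{1,2\}$. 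Since recurrence/transience of $Y_n$ in the sense of Definition \ref{def1} is precisely $\liminf Z_n < \infty$ versus $Z_n \to \infty$, this gives the statement.

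The main obstacle is bookkeeping of error terms rather than any conceptual step. I need to check that the $O(\|\bx\|^{-\beta}(\log\|\bx\|)^{-2})$ error in (\ref{drift}), the $o((\log \|X_n-G_n\|)^{-1})$ slack in (\ref{cov1})–(\ref{cov2}), and the Taylor remainder in the expansion of $\|Y_{n+1}\|$ can all be absorbed into the error in (\ref{star}) at the level of precision required by the general classification; this is where the specific logarithmic form of the error in (\ref{drift}) is used, and where the uniform bound (\ref{bound}) and ellipticity (\ref{ue}) keep the Taylor expansion honest for $Z_n$ large. A secondary subtlety is that $(X_n, G_n)$ need not be Markov, so the general one-dimensional theorem must be applied in its filtration-adapted form with $\F_n$, but the drift/variance computation above is already stated in those terms.
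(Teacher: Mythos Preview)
Your proposal is correct and follows essentially the same route as the paper: reduce to the scalar process $Z_n = \|X_n - G_n\|$, compute its one-step drift and variance via (A1)--(A2) to obtain the Lamperti-type form with effective coefficient $\rho\,\1_{\{\beta=1\}} + (d-1)\sigma^2/2$ and second moment $\sigma^2$, and then invoke the general one-dimensional classification (Theorems~\ref{thm1}--\ref{thm2}) at the threshold $2\rho' = s^2$. One small correction: the role of the ellipticity condition (\ref{ue}) is not to control the Taylor expansion (that is handled by (\ref{bound}) alone) but to guarantee $\limsup_{n\to\infty} Z_n = \infty$ (Lemma~\ref{lem3}), i.e.\ condition (\ref{z4b}), which is a hypothesis of the one-dimensional theorems and without which the recurrence/transience dichotomy is vacuous.
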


 For almost all our remaining results we do not need to assume (A2).
     Set
    \begin{equation}
    \label{elldef}
   \ell (\rho, \beta) :=  \left(  \frac{\rho (1+\beta)}{2+\beta} \right)^{1/(1+\beta)} .
   \end{equation}
   In the case $\beta \in [0,1)$, we have the following result, which completes
   the recurrence classification for $X_n - G_n$ and also gives
   a   detailed account
    of the asymptotic behaviour of the random walk $X_n$.
   In particular, when $\rho>0$, $X_n$ and $G_n$ are transient, and moreover have a limiting direction,
   and the escape
   is quantified by super-diffusive but, for $\beta >0$, sub-ballistic strong laws of large numbers.
   The case $\beta=0$ shows ballistic behaviour.

  \begin{theo}
  \label{dirthm}
   Suppose that
   (A1) holds with $d \in \N$, $\beta \in [0,1)$, and $\rho \in \R \setminus \{0\}$.
   Then $X_n - G_n$ is
 transient if $\rho >0$ and recurrent if $\rho < 0$.
 Moreover, if $\rho >0$, there exists
 a random $\bu \in \Sp_d$ such that, as $n \to \infty$, with $\ell(\rho,\beta)$
 defined at (\ref{elldef}),
 \[ n^{-1/(1+\beta)}  X_n \toas ( 2+\beta )  \ell (\rho, \beta ) \bu,
 ~{\textrm{and}}~
 n^{-1/(1+\beta)}  G_n \toas ( 1+\beta )  \ell (\rho, \beta ) \bu.
  \]
  \end{theo}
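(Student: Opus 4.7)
The central auxiliary quantity is $Y_n := X_n - G_n$. A direct computation from (\ref{com}) yields the recursion
\[ Y_{n+1} = \frac{n}{n+1}\bigl(Y_n + \Delta_n\bigr), \]
so that $Y_{n+1}-Y_n = \Delta_n - (n+1)^{-1}(Y_n+\Delta_n)$. Setting $Z_n := \|Y_n\|$ and Taylor-expanding $\|Y_n+\Delta_n\|$ around $Y_n$, then taking conditional expectations and substituting (\ref{drift}) and (\ref{bound}), gives
\[ \Exp_n[Z_{n+1}-Z_n] = \rho\, Z_n^{-\beta} - \frac{Z_n}{n+1} + (\text{lower-order terms}), \]
a drift of exactly the form (\ref{star}). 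Under (A1) with $\beta\in[0,1)$, the recurrence/transience dichotomy for $Z_n$ (and hence for $Y_n$) follows from the Lamperti-type classification for processes satisfying (\ref{drift0b}) that is developed in the sections of the paper devoted to (\ref{star}): when $\rho<0$ both terms pull $Z_n$ towards the origin, yielding recurrence; when $\rho>0$, the first term dominates enough to drive $Z_n\to\infty$.

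For $\rho>0$ the next step is to upgrade transience to the strong law $Z_n / n^{1/(1+\beta)} \to \ell(\rho,\beta)$ almost surely. The scaling ansatz $z(t)=c\,t^{1/(1+\beta)}$ solves the ODE $\dot z = \rho z^{-\beta} - z/t$ precisely when $c^{1+\beta} = \rho(1+\beta)/(2+\beta)$, identifying $c=\ell(\rho,\beta)$ as the unique attracting equilibrium of the rescaled flow $u(t)= z(t)/t^{1/(1+\beta)}$. To make this rigorous I would work with the process $U_n := Z_n / n^{1/(1+\beta)}$ and apply a Foster--Lyapunov / semimartingale argument on a function of $(U_n - \ell)^2$, showing that outside any neighbourhood of $\ell$ the drift is genuinely restoring; the increments are controlled by (\ref{bound}) and (\ref{ue}). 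I expect this ODE-comparison step, carried out uniformly in time, to be the main technical obstacle.

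Once $Z_n \sim \ell n^{1/(1+\beta)}$ a.s.~is in hand, the limiting direction is established by a martingale argument. Projecting $\Delta_n$ onto the tangent space $\{v \in \R^d : v\cdot \hat Y_n = 0\}$ and using that the drift in (\ref{drift}) is purely radial in $\hat Y_n$ (up to error $O(Z_n^{-\beta}(\log Z_n)^{-2})$), one sees that the tangential part of $\hat Y_{n+1} - \hat Y_n$ consists of a martingale increment of magnitude $O(1/Z_n)=O(n^{-1/(1+\beta)})$ plus summable drift corrections. Since $\sum n^{-2/(1+\beta)} < \infty$ exactly when $\beta<1$, martingale convergence gives $\hat Y_n \to \bu$ a.s.~for some random $\bu\in\Sp_d$; the restriction $\beta<1$ is seen here to be sharp.

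Finally, to pass from $Y_n$ to $X_n$ and $G_n$, write $X_n - X_1 = \sum_{i=1}^{n-1}\Exp_i[\Delta_i] + M_n$ with $M_n$ a martingale whose increments are bounded by (\ref{bound}); then $\|M_n\| = O(n^{1/2}\log n)$ is negligible relative to $n^{1/(1+\beta)}$ for $\beta<1$. Using $\hat Y_i \to \bu$ and $Z_i^{-\beta} \sim \ell^{-\beta} i^{-\beta/(1+\beta)}$ the drift sum satisfies
\[ \sum_{i=1}^{n-1} \Exp_i[\Delta_i] \sim \rho\,\ell^{-\beta}\bu \sum_{i=1}^{n-1} i^{-\beta/(1+\beta)} \sim (1+\beta)\rho\,\ell^{-\beta} n^{1/(1+\beta)}\bu = (2+\beta)\ell\, n^{1/(1+\beta)}\bu, \]
after substituting $\ell^{1+\beta}=\rho(1+\beta)/(2+\beta)$, giving the claimed limit for $X_n$. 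A Cesàro argument applied to $X_i/i^{1/(1+\beta)}$ then yields $G_n / n^{1/(1+\beta)} \to (1+\beta)\ell\bu$, completing the theorem.
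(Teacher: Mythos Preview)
Your outline is correct and tracks the paper's proof closely through the first three steps: the drift computation for $Z_n=\|Y_n\|$, the recurrence/transience dichotomy via the Lamperti-type analysis of (\ref{star}), and the limiting direction for $\hat Y_n$ via a Doob decomposition (the paper's Lemmas \ref{dlem1} and \ref{dlem2} are exactly your tangential-martingale argument). For the law of large numbers you correctly flag the ODE comparison as the main obstacle; the paper handles it not with $U_n=Z_n/n^{1/(1+\beta)}$ directly but via $V_n=(n-1)/Z_n^{1+\beta}$ and a general stochastic-approximation lemma (Lemma \ref{lower-lm}), which keeps the drift computation linear in $V_n$ and makes the stability analysis cleaner.

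The genuine difference is in the final step. You recover $X_n$ by decomposing $X_n-X_1=\sum_i\Exp_i[\Delta_i]+M_n$, summing the drift using $\hat Y_i\to\bu$ and $Z_i\sim\ell\,i^{1/(1+\beta)}$, and bounding the martingale by $O(n^{1/2}\log n)=o(n^{1/(1+\beta)})$; $G_n$ then follows by Ces\`aro averaging. This works. The paper instead uses the exact identity $G_n=X_1+\sum_{j=2}^n (j-1)^{-1}Y_j$ (equation (\ref{gandy})), which gives $G_n$ directly from the already-established asymptotics $Y_j\sim\ell\,j^{1/(1+\beta)}\bu$ without needing to control a separate martingale or re-sum the drift; $X_n=G_n+Y_n$ then drops out. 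The paper's route is a little more economical (no appeal to martingale bounds on $M_n$), while yours has the mild advantage of not relying on the algebraic identity (\ref{gandy}).
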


At the level of detail displayed by Theorem \ref{dirthm}, we can see
a difference between the asymptotic behaviour of the $\beta \in [0,1)$,
$\rho>0$ case of (\ref{drift}) compared to the `supercritical
Lamperti-type' case in which the drift is away from a fixed origin
(i.e., the analogue of (\ref{drift}) holds but with $\bx = X_n$ rather
than $\bx = X_n - G_n$).
See Theorem \ref{dirthm0} below and the remarks that precede it.

   Our ultimate goal is a complete recurrence classification for   the process $X_n$.
   Theorem \ref{dirthm} covers the case $\beta \in [0,1)$, $\rho>0$.
   Otherwise,
  we have at the moment only the following one-dimensional result
  (to be viewed in conjunction with Theorem \ref{ythm1}).

  \begin{theo}
  \label{1dthm}
     Suppose that
   (A1) holds for $d=1$. Then if $X_n - G_n$ is transient,
  $X_n$ and $G_n$ are also transient,
  i.e., $|X_n| \to \infty$ and $|G_n| \to \infty$ a.s.~as
  $n \to \infty$.
  \end{theo}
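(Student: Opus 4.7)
The plan is to reduce the problem to a simple algebraic identity between $X_n$, $G_n$ and $Z_n := X_n - G_n$. From $nG_n = (n-1)G_{n-1} + X_n$ one rearranges, for $n \geq 2$, to obtain
\[ G_n - G_{n-1} = \frac{Z_n}{n-1} . \]
Telescoping yields $G_n = G_1 + \sum_{k=2}^n Z_k/(k-1)$, so controlling $|G_n|$ reduces to controlling this series, after which $X_n = G_n + Z_n$ disposes of $|X_n|$.

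The first step is to establish sign stability of $Z_n$ in one dimension. From the elementary recursion $Z_{n+1} - Z_n = (n\Delta_n - Z_n)/(n+1)$ and the bounded-jumps hypothesis (\ref{bound}) one has $|Z_{n+1} - Z_n| \leq B + |Z_n|/(n+1)$. For $Z$ to change sign between steps $n$ and $n+1$ this jump must strictly exceed $|Z_n|$, which (since $n/(n+1) \geq 1/2$ for $n\geq 1$) forces $|Z_n| < 2B$. Since transience of $X_n-G_n$ in the sense of Definition \ref{def1} means $|Z_n| \to \infty$ a.s., only finitely many sign changes occur almost surely, so either $Z_n \to +\infty$ or $Z_n \to -\infty$ a.s.

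Without loss of generality assume $Z_n \to +\infty$ a.s.\ (the other case is symmetric). Then there is a random $N < \infty$ such that $Z_n \geq 1$ for all $n \geq N$. Inserting into the telescoping identity,
\[ G_n \;\geq\; G_N + \sum_{k=N+1}^n \frac{1}{k-1} \;\longrightarrow\; +\infty, \]
so $G_n \to +\infty$ a.s., and then $X_n = G_n + Z_n \to +\infty$ a.s. In particular $|X_n|\to\infty$ and $|G_n|\to\infty$ a.s., as required.

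There is no real obstacle: neither the drift hypothesis (\ref{drift}) nor the ellipticity (\ref{ue}) is needed here, only bounded increments (\ref{bound}). One-dimensionality enters in exactly two places: in promoting $|Z_n| \to \infty$ to sign stability of $Z_n$, and in the scalar decomposition $X_n = G_n + Z_n$ which turns $G_n \to +\infty$ into $X_n\to+\infty$. In higher dimension the telescoping identity $G_n - G_{n-1} = Z_n/(n-1)$ still holds vectorially, but one would need a substitute for sign stability, for instance a limiting direction for $Z_n$ of the kind provided in a different regime by Theorem \ref{dirthm}, in order to conclude $\|G_n\|\to\infty$.
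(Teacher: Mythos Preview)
Your proof is correct and follows essentially the same route as the paper's: both use the telescoping identity $G_n = X_1 + \sum_{j=2}^n Y_j/(j-1)$ (equation (\ref{gandy})), together with sign stability of $Y_n$ in $d=1$ deduced from bounded jumps, to force $G_n \to \pm\infty$ and hence $X_n = G_n + Y_n \to \pm\infty$. Your derivation of sign stability directly from the recursion is slightly more self-contained than the paper's appeal to (\ref{incbound}), but the argument is the same in substance.
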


 Our final result on our walk with barycentric interaction
 gives upper bounds on   $\|X_n\|$ for general $d \in \N$.
 In view
 of the interpretation of
 $(X_1,\ldots,X_n)$ as a model
 for a polymer molecule in solution, we can describe the phases
 listed in Theorem \ref{extent} below as
 (i) {\em extended},
 (ii) {\em transitional}, (iii) {\em partially collapsed},
and  (iv) {\em fully collapsed}.
 See the discussion in Section \ref{poly} below.
  Theorem \ref{extent}(i) is included for comparison only;
Theorem \ref{dirthm} gives a much sharper result.
Define
\begin{equation}
\label{gammadef}
 \gamma (d, \sigma^2 , \rho ) := \left( 2 - d -   \frac{2 \rho}{\sigma^2} \right)^{-1} .\end{equation}

 \begin{theo}
 \label{extent}
 Suppose that (A1) holds with $d \in \N$, $\beta \geq 0$, and $\rho \in \R$.
 Then the following bounds apply.
 \begin{itemize}
 \item[(i)] (Theorem \ref{dirthm}.) If $\beta \in [0,1)$ and $\rho>0$,
 there exists $C \in (0,\infty)$ such that, a.s.,
  $\| X_n \| \leq C n^{1/(1+\beta)}$ for all but finitely many $n \in \N$.
  \item[(ii)]  If $\beta \geq 1$,  then for any $\eps>0$, a.s.,
  $\| X_n \| \leq  n^{1/2}( \log n)^{(1/2)+\eps}$ for all but finitely many $n \in \N$.
  \item[(iii)] Suppose that (A2) also holds. Suppose that $\beta =1$ and $\rho < -d\sigma^2/2$, and
  let $\gamma(d,\sigma^2,\rho) \in (0,1/2)$ be as defined at (\ref{gammadef}). Then
  for any $\eps>0$, a.s., for all but finitely many $n \in \N$,
  $\| X_n \| \leq n^{\gamma(d,\sigma^2,\rho)+\eps}$.
    \item[(iv)]
  If $\beta \in [0,1)$ and $\rho <0$,
  then for any $\eps>0$, a.s.,
  $\| X_n \| \leq    (\log n)^{1+\frac{1}{1-\beta} +\eps}$ for all but finitely many $n \in \N$.
  \end{itemize}
  \end{theo}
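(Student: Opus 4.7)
Estimate (i) is a restatement of Theorem \ref{dirthm}. For the other parts, the plan is to obtain almost-sure upper envelopes for $Y_n := X_n - G_n$ and then deduce bounds on $\|X_n\|$ via the identity
\begin{equation*}
G_{n+1} - G_n = \frac{X_{n+1} - G_n}{n+1} = \frac{Y_{n+1}}{n},
\end{equation*}
which follows directly from (\ref{com}). This gives $\|X_n\| \leq \|Y_n\| + \|G_1\| + \sum_{k=1}^{n-1}\|Y_{k+1}\|/k$, so if $\|Y_n\| \leq \phi(n)$ for all but finitely many $n$ a.s., then $\|X_n\|$ is of order $\phi(n) + \sum_{k\leq n}\phi(k)/k$. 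For polynomially-growing $\phi$ this is of the same order as $\phi(n)$, producing (ii) and (iii); for logarithmic $\phi$ the sum picks up an extra factor of $\log n$, producing the ``$1+$'' in the exponent of (iv).

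The bounds on $\|Y_n\|$ for (ii) and (iii) are obtained from the moment Lyapunov function $\|Y_n\|^{2a}$. Using the recursion $Y_{n+1} = \frac{n}{n+1}(Y_n + \Delta_n)$, a Taylor expansion combined with (A2) yields
\begin{equation*}
\Exp_n\!\left[\|Y_{n+1}\|^{2a} - \|Y_n\|^{2a}\right] \approx -\frac{2a\|Y_n\|^{2a}}{n+1} + K_a \|Y_n\|^{2a-2},
\end{equation*}
where $K_a = a[2\rho + \sigma^2(d+2a-1)]$ when $\beta=1$, and the $\rho$-term is negligible when $\beta>1$. For (ii), take $a=1$: the constant $K_1$ is bounded, giving $\Exp\|Y_n\|^2 = O(n)$, and Chebyshev applied along the dyadic sequence $n_k = 2^k$ (together with an oscillation bound using (\ref{bound}) to fill in between) yields $\|Y_n\| \leq (n\log n)^{1/2+\varepsilon}$ a.s. For (iii), the hypothesis $\rho<-d\sigma^2/2$ is precisely what creates a non-empty range of $a>0$ on which $K_a<0$, with critical threshold $2a = 1/\gamma-1$ for $\gamma$ defined by (\ref{gammadef}). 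For $2a$ below this threshold, $\|Y_n\|^{2a}$ is an approximate supermartingale and iteration gives $\Exp\|Y_n\|^{2a} = O(n^{-2a})$. Markov's inequality yields $\Pr[\|Y_n\|\geq n^{\gamma+\varepsilon}] = O(n^{-2a(1+\gamma+\varepsilon)})$, which is summable when $2a > 1/(1+\gamma)$; this range is compatible with $2a < 1/\gamma-1$ precisely because $\gamma<1/2$, and Borel--Cantelli gives $\|Y_n\| = O(n^{\gamma+\varepsilon})$ a.s.

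For (iv), with $\beta\in[0,1)$ and $\rho<0$, the restoring drift $\rho\|Y_n\|^{-\beta}$ is strong enough to support an exponential Lyapunov $V(y):=\exp(\lambda\|y\|^{1-\beta})$. A Taylor expansion shows that for small $\lambda>0$, the drift of $V$ is negative outside a compact set, so $V(Y_n)$ is a bounded supermartingale and
\begin{equation*}
\Pr[\|Y_n\|\geq L] \leq C\exp(-\lambda L^{1-\beta}).
\end{equation*}
Choosing $L(n) = ((1+\varepsilon)\lambda^{-1}\log n)^{1/(1-\beta)}$ makes this summable in $n$, and Borel--Cantelli combined with the summation identity above yields $\|X_n\| = O((\log n)^{1+1/(1-\beta)+\varepsilon})$. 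The main technical obstacle will be the critical case (iii): checking that the polynomial-moment threshold $2a=1/\gamma-1$ precisely matches the definition (\ref{gammadef}), and controlling the $o(1)$ error terms from (A2) uniformly on events where $\|Y_n\|$ is bounded away from zero so that the approximate supermartingale inequality holds eventually.
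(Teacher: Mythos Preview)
Your overall architecture — bound $\|Y_n\|$ first, then transfer to $\|X_n\|$ via the telescoping identity for $G_n$ — matches the paper exactly, as does the exponential Lyapunov function for (iv). The polynomial Lyapunov functions you propose for (ii) and (iii) are also the right objects; the paper uses $\|Y_n\|^{1+\kappa'}$ in just this way. The gaps are in how you extract almost-sure bounds from them.

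For (ii), the step ``Chebyshev along $n_k=2^k$ together with an oscillation bound using (\ref{bound}) to fill in between'' fails: (\ref{bound}) only gives $\bigl|\|Y_{n+1}\|-\|Y_n\|\bigr|\le C$, so the oscillation of $\|Y_n\|$ over $[2^k,2^{k+1}]$ can be of order $2^k$, which swamps the target $2^{k/2}k^{1/2+\eps}$. No maximal inequality is readily available either, since $\|Y_n\|^2$ is not a submartingale. The paper bypasses moments altogether: it shows $\Exp_n[\|Y_{n+1}\|^2-\|Y_n\|^2]\le C$ uniformly (no (A2) needed here) and invokes a general lemma (Theorem~3.2 of \cite{mvw}, essentially a Robbins--Siegmund argument) that converts bounded conditional drift directly into $\|Y_n\|^2=O(n(\log n)^{1+\eps})$ a.s.

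For (iii), the claim $\Exp\|Y_n\|^{2a}=O(n^{-2a})$ is false. Your multiplicative recursion $\Exp_n\|Y_{n+1}\|^{2a}\le(1-2a/(n+1))\|Y_n\|^{2a}$ holds only on $\{\|Y_n\|>M\}$; on $\{\|Y_n\|\le M\}$, which occurs infinitely often since $Y_n$ is recurrent here by Theorem~\ref{ythm1}, one can only say $\|Y_{n+1}\|^{2a}\le(M+B)^{2a}$, and this persistent forcing prevents any decay of the moment. There is also an arithmetic slip: the correct constant is $K_a=a[2\rho+\sigma^2(d+2a-2)]$, giving threshold $2a<1/\gamma$ rather than $1/\gamma-1$. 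With both corrections your route is salvageable — an excursion argument gives $\sup_n\Exp\|Y_n\|^{2a}<\infty$, and then Markov plus Borel--Cantelli is summable when $2a(\gamma+\eps)>1$, which is compatible with $2a<1/\gamma$ for any $\eps>0$ — but the argument as written does not go through. The paper again uses the \cite{mvw} lemma: show $\Exp_n[\|Y_{n+1}\|^{1+\kappa'}-\|Y_n\|^{1+\kappa'}]$ is uniformly bounded above for $\kappa'<1/\gamma-1$, and conclude $\|Y_n\|^{1+\kappa'}=O(n(\log n)^{1+\eps})$ a.s.
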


 We suspect that the bounds in  Theorem \ref{extent} are close to sharp, in that
corresponding lower bounds of almost the same order should be valid (only infinitely often, of course, in the recurrent cases).
However, the lower bounds of \cite[Section 4]{mvw} do not apply directly.

Given (\ref{com}) it is evident that
  the bounds for $\|X_n \|$ in Theorem \ref{extent}
  imply the same bounds (up to multiplication by a constant) for $\| G_n \|$, and hence
  $\| X_n - G_n \|$ too. In addition, the same upper
  bounds hold (again up to a constant factor)
  for the quantities
  of {\em diameter} $D_n$ and root-mean-square {\em radius of gyration} $R_n$
    given by
  \[ D_n := \max_{1 \leq i < j \leq n } \| X_i - X_j \| , ~~~
    R_n^2 :=  \frac{1}{n} \sum_{i=1}^n \| X_i - G_n \|^2
  = \frac{1}{n^2} \sum_{i=1}^n \sum_{j=1}^{i-1} \| X_i - X_j \|^2 ;\]
  these are both physically significant in the interpretation
  of $(X_1,\ldots,X_n)$ as a polymer chain (see pp.~95--96 of \cite{hughes} and
Section \ref{poly} below).

Finally, we briefly describe how our results compare to
the more classical model studied by Lamperti \cite{lamp1,lamp3}.
That is, suppose that (A1) holds but that
(\ref{drift}) holds with $\bx = X_n$
{\em instead of} $\bx = X_n - G_n$. In this case,
there is no self-interaction in the drift term,
and the drift is relative to a fixed origin. Lamperti
studied examples  of such processes (so-called centrally biased random walks) in \cite[Section 4]{lamp1}
and \cite[Section 5]{lamp3}. We see that our
recurrence classification for the self-interacting process
 $X_n - G_n$ in the case $\beta=1$,
Theorem \ref{ythm1}, gives, surprisingly,
essentially the same criteria as Lamperti's
 \cite[Theorem 4.1]{lamp1}. In the case $\beta \in [0,1)$,
 the
difference between the two settings is clearly
manifest in the constant in the law of large numbers.
The analogue of our Theorem \ref{dirthm}
in the case of drift relative to the origin is an immediate
consequence of Theorem 2.2 of \cite{mmw}
with Theorem 3.2 of \cite{superlamp} (see the discussion in
\cite[Section 3.2]{superlamp}):
\begin{theo}
\label{dirthm0}
\cite{mmw,superlamp}
  Suppose that
   (A1) holds, with the modification that
   (\ref{drift}) holds with $\bx =X_n$
instead of $\bx = X_n - G_n$.
Suppose that $d \in \N$, $\beta \in [0,1)$, and $\rho >0$.
  Then there exists
 a random $\bu \in \Sp_d$ such that, as $n \to \infty$,
 \[ n^{-1/(1+\beta)}  X_n \toas (2+\beta)^{1/(1+\beta)} \ell (\rho, \beta) \bu,
 ~{\textrm{and}}~
 n^{-1/(1+\beta)}  G_n \toas ( 1+\beta ) (2+\beta)^{-\beta/(1+\beta)} \ell (\rho, \beta ) \bu.
  \]
\end{theo}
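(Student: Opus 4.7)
The plan is to assemble the result from two cited theorems---Theorem~2.2 of \cite{mmw}, which delivers a strong law of large numbers for the norm of a non-Markovian Lamperti-type process, and Theorem~3.2 of \cite{superlamp}, which provides a random limiting direction under the same hypotheses---applied to the centrally biased (no-self-interaction) setting where $\bx = X_n$.

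First I would reduce to a one-dimensional Lamperti problem for $Y_n := \|X_n\|$. A Taylor expansion gives
\[
\|X_{n+1}\| - \|X_n\| = \hat X_n \cdot \Delta_n + \frac{1}{2\|X_n\|}\bigl(\|\Delta_n\|^2 - (\hat X_n \cdot \Delta_n)^2\bigr) + O(\|X_n\|^{-2}),
\]
and combining this with (\ref{bound}), (\ref{ue}), and the modified drift (\ref{drift}) with $\bx = X_n$, one finds
\[
\Exp_n [\|X_{n+1}\| - \|X_n\|] = \rho \|X_n\|^{-\beta} + O(\|X_n\|^{-1}),
\]
where the leading term dominates since $\beta \in [0,1)$. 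Theorem~2.2 of \cite{mmw} applied to $Y_n$ then yields $n^{-1/(1+\beta)} \|X_n\| \toas (\rho(1+\beta))^{1/(1+\beta)}$, which equals $(2+\beta)^{1/(1+\beta)} \ell(\rho,\beta)$ by (\ref{elldef}).

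Second, I would obtain the limiting direction $\bu \in \Sp_d$ from Theorem~3.2 of \cite{superlamp}. Heuristically, once $\|X_n\| \asymp n^{1/(1+\beta)}$, each angular increment $\|\hat X_{n+1} - \hat X_n\|$ is $O(1/\|X_n\|)$ by (\ref{bound}), while (\ref{drift}) forces the mean angular displacement to be of still smaller order; the total angular motion is therefore controlled in $L^2$ by $\sum_n \Exp \|\hat X_{n+1}-\hat X_n\|^2 \leq C \sum_n n^{-2/(1+\beta)}$, which is finite precisely because $\beta < 1$. Standard arguments promote this $L^2$-Cauchy property to almost sure convergence of $\hat X_n$, furnishing $\bu$.

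Combining the two steps, $n^{-1/(1+\beta)} X_n = \bigl(n^{-1/(1+\beta)} \|X_n\|\bigr) \hat X_n \toas (2+\beta)^{1/(1+\beta)} \ell(\rho,\beta)\, \bu$. The stated limit for $G_n$ then follows by Cesaro summation: writing $a_i := i^{1/(1+\beta)}$,
\[
n^{-1/(1+\beta)} G_n = a_n^{-1} n^{-1} \sum_{i=1}^n a_i \bigl(a_i^{-1} X_i\bigr) \toas (2+\beta)^{1/(1+\beta)} \ell(\rho,\beta)\, \bu \cdot \frac{1+\beta}{2+\beta},
\]
and a direct algebraic check identifies the constant with $(1+\beta)(2+\beta)^{-\beta/(1+\beta)}\ell(\rho,\beta)$. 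The principal task---and the only potential obstacle---is not a new argument but verifying that the non-Markovian hypotheses of \cite{mmw} and \cite{superlamp} are met by the process defined through (A1) with $\bx = X_n$; this is essentially a transcription, since neither cited theorem requires the self-interaction through $G_n$ that is central to the main body of this paper.
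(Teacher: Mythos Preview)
Your overall strategy matches the paper's exactly: Theorem~\ref{dirthm0} is not proved anew in the paper but is simply stated as an immediate consequence of Theorem~2.2 of \cite{mmw} together with Theorem~3.2 of \cite{superlamp}, and the limit for $G_n$ follows by the Ces\`aro-type averaging you describe (correctly computed).

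However, you have swapped the roles of the two cited results. The paper \cite{mmw} (``Angular asymptotics\ldots'') is the source of the \emph{limiting direction} argument, while \cite{superlamp} (``Rate of escape\ldots\ for the supercritical Lamperti problem'') supplies the \emph{law of large numbers} for $\|X_n\|$. The present paper makes this explicit just after the theorem statement, where it refers to ``the relevant law of large numbers (Theorem~3.2 of \cite{superlamp})'' and to ``the rather involved argument for establishing a limiting direction used in \cite{mmw}''. Your heuristic sketch for the angular convergence (summability of $\sum_n n^{-2/(1+\beta)}$) is in fact closer to the alternative route the paper mentions---namely, adapting the Doob-decomposition argument of Section~\ref{direction} with Lemma~\ref{ylln} replaced by the Lamperti law of large numbers---than to what \cite{superlamp} actually provides. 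So the mathematics is right, but the citations need to be interchanged before this could be written up.
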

The method of proof of Theorem \ref{dirthm} in the
present paper (see Section \ref{direction})
gives an alternative proof of Theorem \ref{dirthm0},
avoiding the rather involved argument
for establishing a limiting direction used in \cite{mmw}.
Specifically, in the argument in Section \ref{direction},
we can apply the relevant law of large numbers (Theorem 3.2
of \cite{superlamp}) in place of our Lemma \ref{ylln}.
Note that under the assumption of bounded increments, the law of large
numbers \cite[Theorem 3.2]{superlamp} is available, unlike in the generality of Theorem 2.2
from \cite{mmw}; thus in the more general setting, the proof
of \cite{mmw} is currently the only one that the authors are aware of.

  \subsection{Examples}
  \label{sec:exam}

  To illustrate our assumptions and results, we give three examples
  of   walks satisfying (A1) and (A2). In all of the following examples,
  the couple $(X_n,G_n)$ is Markov.

  \paragraph{Example 1.} For $\bx \in \R^d$,
  let $\bb_1 (\bx), \ldots, \bb_d(\bx)$ denote
  an orthonormal basis for $\R^d$ such that $\bb_1(\bx) = \hat \bx$, the unit
  vector in the direction $\bx$; we use the convention $\hat \0 := \be_1 := (1,0,\ldots,0)$.
  Fix $\eps_0 \in (0, 1/(2d))$, $\rho \in \R$, and $\beta >0$.
  Take
  \[ \Pr_n [ \Delta_n = \bb_i (X_n-G_n) ] = \Pr_n [ \Delta_n = -\bb_i (X_n-G_n) ] = \frac{1}{2d}, ~~~(i \in \{2,\ldots,d\} ),\]
  and
  \begin{align*} \Pr_n [ \Delta_n = \bb_1 (X_n-G_n) ] & =
  \begin{cases}
  \frac{1}{2d} +
  \frac{\rho}{2} \| X_n - G_n \|^{-\beta} & {\rm if} ~\frac{| \rho |}{2} \| X_n - G_n \|^{-\beta} \leq \frac{1}{2d} -\eps_0 \\
  \frac{1}{d} - \eps_0 & {\rm if} ~
      \frac{ \rho }{2} \| X_n - G_n \|^{-\beta} > \frac{1}{2d} -\eps_0 \\
         \eps_0 & {\rm if} ~
      \frac{ \rho }{2} \| X_n - G_n \|^{-\beta} < - \frac{1}{2d} +\eps_0
      \end{cases}; \\
       \Pr_n [ \Delta_n = -\bb_1 (X_n-G_n) ] & = \frac{1}{d} - \Pr_n [ \Delta_n = \bb_1 (X_n-G_n) ].
  \end{align*}
  In other words, for all $\| X_n - G_n \|$ sufficiently large,
  \[ \Pr_n [ \Delta_n = \pm \bb_1 (X_n-G_n) ]   =
   \frac{1}{2d} \pm
  \frac{\rho}{2} \| X_n - G_n \|^{-\beta} .\]
  Then writing $\bx = X_n - G_n$, we have for $\bx \in \R^d$ with $\| \bx \|$ sufficiently large, a.s.,
  \[
  \Exp_n [ \Delta_n  ] = \rho \| \bx\|^{-\beta} \hat \bx;
  ~~~ \Exp_n [ (\Delta_n^{(i)})^2 ] = \frac{1}{d} \sum_{j=1}^d ( \bb_j \cdot \be_i )^2 = \frac{1}{d} .\]
  It is not hard to verify that (A1) and (A2) (with $\sigma^2 = 1/d$)
  hold in this case.
 In particular, if $\beta =1$ Theorem \ref{ythm1} says that $X_n-G_n$
 is transient if and only if $\rho > (2-d)/(2d)$. See Figure \ref{fig3}
 for some simulations of this model.

  \begin{figure}[h!]
\begin{center}
\includegraphics[angle=0, width=0.48\textwidth]{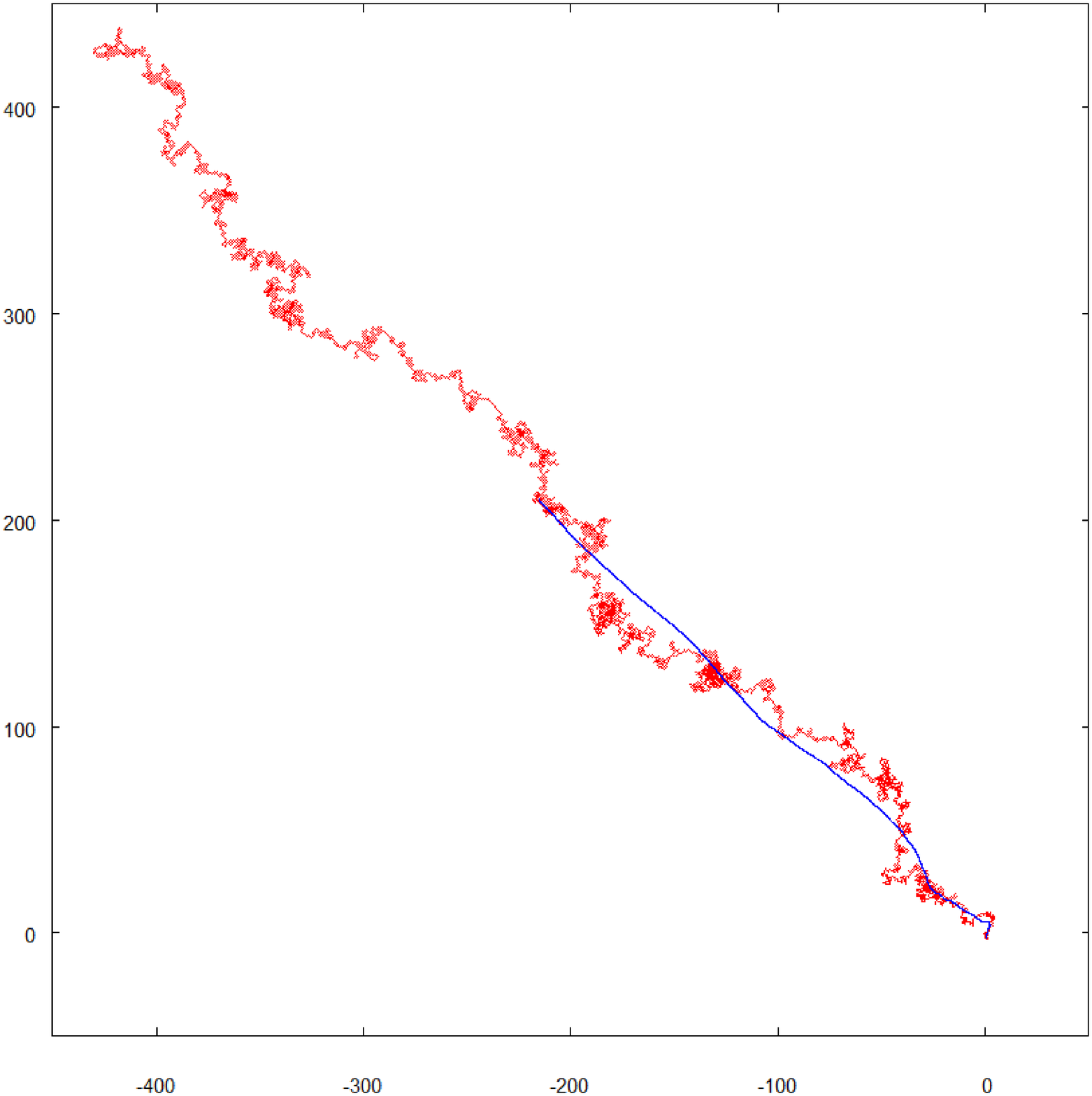}
\includegraphics[angle=0, width=0.48\textwidth]{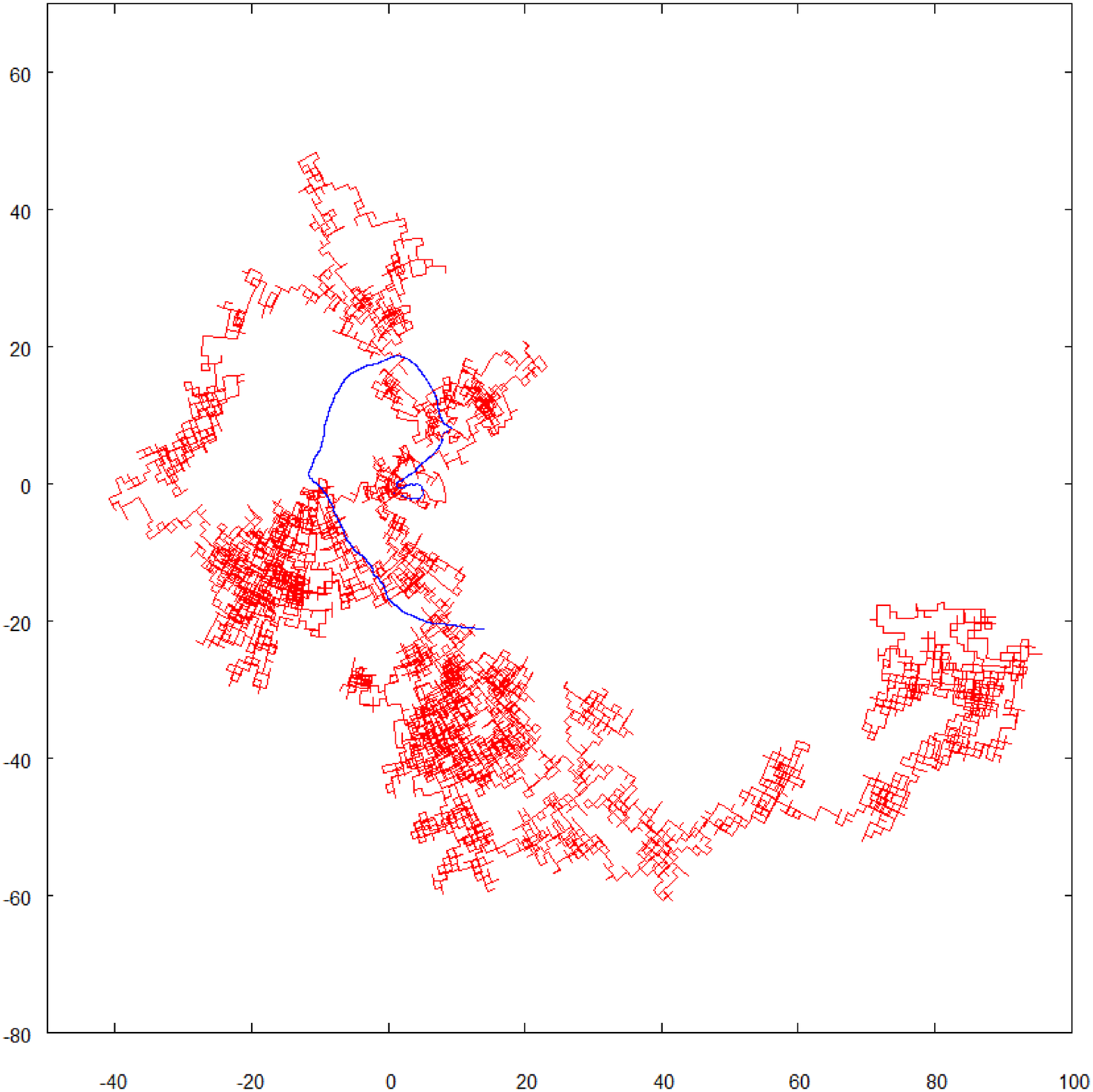}
\includegraphics[angle=0, width=0.48\textwidth]{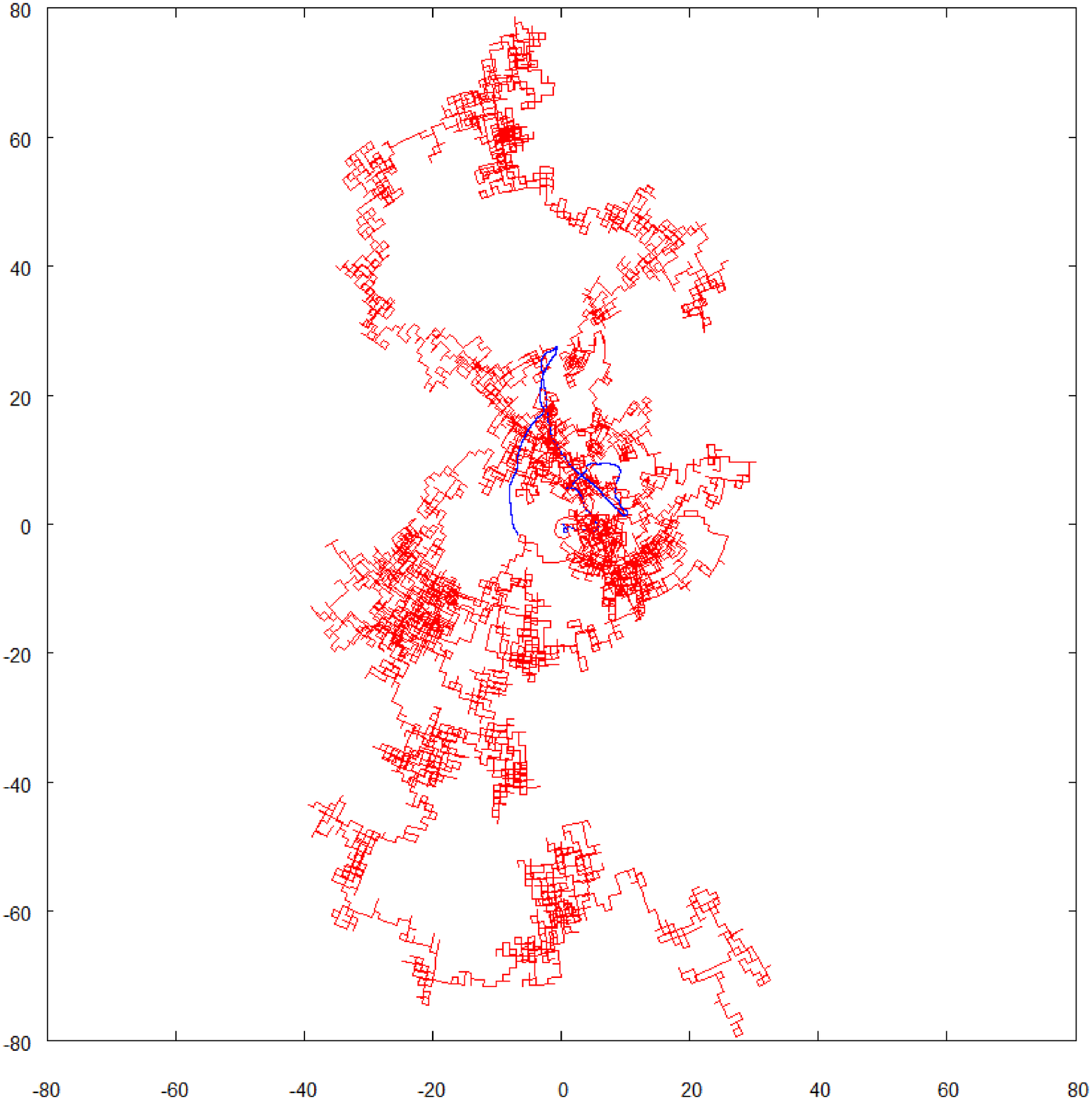}
\end{center}
\caption{Simulation of $10^4$ steps of the random walk (red) and its centre
of mass (blue), as described in Example 1, with $d=2$, $\rho =0.1$, $\eps_0 = 0.01$,
and different values of $\beta \in (0,1]$; the three pictures have $\beta = 0.1, 0.5, 1$
clockwise from the top left. Theorem \ref{dirthm} says that in the two $\beta <1$ cases,
the random walk $X_n$ is transient with a limiting direction. In the $\beta=1$ case, we know
from Theorem \ref{ythm1} that $X_n - G_n$ is transient ($\rho_0 = 0$ here),
 but transience of $X_n$ itself
is an open problem.}
\label{fig3}
\end{figure}

\paragraph{Example 2.} Here is another example satisfying (A1) and (A2), this time with jumps
supported on a unit sphere rather than being restricted to a finite set of possibilities.
Let $\beta >0$ and $\rho \in \R$.
Given $\F_n$ and $X_n - G_n = \bx$, the jump $\Delta_n$ is obtained as follows.
\begin{itemize}
\item[(i)] Choose ${\bf U}_n$ uniformly distributed on the unit sphere $\Sp_d$.
\item[(ii)] Take $\Delta_n = {\bf U}_n  + \rho \| \bx \|^{-\beta} \1_{\{ \rho \| \bx \|^{-\beta} < 1/2 \}} \hat \bx $.
\end{itemize}
So the jumps of the walk are uniform on a sphere, but the centre of the sphere is (for $\| \bx \|$ large enough)
shifted slightly in the direction $\pm \hat \bx$, depending on the sign of $\rho$.
Conditions (A1) and (A2) (again with $\sigma^2=1/d$)
are readily verified for this example.

\paragraph{Example 3.} We sketch one more example with $d \geq 2$, $\beta >0$ and
$\rho>0$, which is reminiscent of the walk avoiding its convex hull.
Take the jump $\Delta_n$ uniform on  $\Sp_d$ minus the
circular cap of relative surface $\rho \| X_n - G_n \|^{-\beta}$ pointing towards
the barycentre, i.e.,
$$ \Delta_n ~\text{is uniform on}~ \left\{\by \in \Sp_d : \by \cdot \hat \bx > -1 + C(\rho) \|\bx\|^{-\beta/(d-1)}
\right\}, $$
 with $\bx=X_n-G_n$,
where $C(\rho)$ is a constant depending on $\rho$ and $d$.
Here we assume
$\|\bx\|$ is sufficiently large; if not we can take $\Delta_n$
uniform on $\Sp_d$.

   \subsection{Open problems and paper outline}

   Our results  give a detailed
   recurrence classification (Theorem \ref{ythm1}) for the process $X_n - G_n$.
   Of considerable interest is the asymptotic behaviour of $X_n$ itself,
   for which we have a complete picture only in the case $\beta \in [0,1)$, $\rho>0$ (Theorem \ref{dirthm}).
   We conjecture:
   \begin{itemize}
   \item $\|X_n \| \to \infty$ a.s.\ if and only if $\| X_n - G_n \| \to \infty$ a.s..
  \end{itemize}
  Theorems \ref{dirthm} and \ref{1dthm} verify the `if' part of the conjecture when
  (i) $\beta \in [0,1)$ and $\rho>0$, and (ii) $d=1$. Another open problem involves the angular
  behaviour of our model when $\beta \geq 1$. By analogy with \cite{mmw}
  we suspect that there is {\em no} limiting direction in that case
  (in contrast to Theorem \ref{dirthm}).

  The remainder of the paper is arranged as follows. In Section \ref{sec:motiv}
  we describe in more detail how our model is related to Lamperti's
  problem (Section \ref{sec:lampsrw}), and to the centre-of-mass of SRW (Section \ref{sec:com}),
  and we prove Theorem \ref{srwthm}. We also outline the
  motivation of our random walk as a model for a random polymer
  in solution (Section \ref{poly}). Section \ref{prelim}
  is devoted to preliminary computations for the processes
  $X_n$, $G_n$, and (especially) $X_n - G_n$. In Section
  \ref{1dproc} we take a somewhat more general view,
  and study the asymptotic properties of one-dimensional,
  not necessarily Markov, processes satisfying a precise version of
  (\ref{drift0b}). The recurrence classification
  is a time-varying, more complicated analogue of Lamperti's results \cite{lamp1,lamp3},
  and we use some martingale ideas related to those in \cite{FMM,superlamp}. In the case $\beta \in [0,1)$,
  $\rho>0$ we prove a law of large numbers that is a cornerstone of our subsequent analysis
  for the random walk $X_n$. This law of large numbers
  is an analogue of that in \cite{superlamp}
  for the supercritical Lamperti problem. While the results of \cite{superlamp}
  supply an upper bound crucial to our approach, the law of large numbers in the present setting requires a new idea,
  and our key tool here is a stochastic approximation  lemma (Lemma \ref{lower-lm}), which may be of independent interest.
    Section \ref{proofs}
  is devoted to the proofs of our main theorems.
  The basic method is an application of the results
  of Section \ref{1dproc} to the process $\| X_n - G_n \|$, armed
  with our computations in Section \ref{prelim}.
  We carry out this approach to prove Theorems \ref{ythm1} and \ref{1dthm} in Section \ref{recprf}.
  A crucial ingredient
  is the proof, in Section \ref{direction}, that $X_n - G_n$ has a limiting
  direction. This enables us to prove Theorem \ref{dirthm}.
  Finally, in Section \ref{boundprf}, we
   prove Theorem \ref{extent}, building on some general results
  from \cite{mvw}.

    \section{Connections and further motivation}
  \label{sec:motiv}

  \subsection{Lamperti's problem and simple random walk norms}
  \label{sec:lampsrw}

  Our problem is related to a time-dependent
  version of the so-called Lamperti problem.
  We briefly review the latter here.
  Let $Z = (Z_n)_{n \in \N}$  be a discrete-time
 stochastic process adapted to a filtration
  $(\F_n)_{n \in \N}$   and
taking values in an unbounded
subset $\SS$ of $[0,\infty)$. The set
$\SS$ may be countable (as in the SRW example which follows in this section) or uncountable
(as in the application to stochastic
billiards described in \cite{mvw}).

Lamperti \cite{lamp1,lamp2,lamp3}
investigated the extent to which
the asymptotic
behaviour of $Z$ is determined by the
increment moments $\Exp_n [ (Z_{n+1} - Z_n)^k ]$
when viewed as (random) functions of $Z_n$.
Formally, suppose that
for some $k$,
$\Exp_n [ (Z_{n+1} - Z_n)^k ]$ is well-defined for all $n$.
Then
by standard properties of conditional expectations
(see e.g.~\cite[Section 9.1]{chung}),   there exist a Borel-measurable
function $\phi_k(n ; \, \cdot \,)$ and an $\F_n$-measurable
random variable $\psi_k (n)$ (orthogonal to $Z_n$) such that, a.s.,
\[  \Exp_n [ (Z_{n+1} - Z_n)^k ] =  \Exp  [ (Z_{n+1} - Z_n)^k \mid Z_n ]   + \psi_k (n)
= \phi_k (n ; Z_n ) + \psi_k (n) .\]
Define
\begin{equation}
\label{mudef}
 \mu_k (n ; x ) := \phi_k (n ; x) + \psi_k (n) .\end{equation}
The $\mu_k (n ; x )$ are, in general, $\F_n$-measurable
random variables; if $Z$ is a Markov process
then $\mu_k (n ; x) = \Exp  [ (Z_{n+1} - Z_n)^k \mid Z_n=x ]$ is a deterministic
function of $x$ and $n$, and if in addition $Z$ is
time-homogeneous, $\mu_k (n ; x) = \mu_k (x)$
is a function of $x$ only.
  For many applications,
  including those described here, $Z$ will not be
  time-homogeneous or Markovian, but nevertheless the
  $\mu_k (n ;x)$
  are well-behaved asymptotically.

In this section, $X = (X_n)_{n \in \N}$ will be the symmetric
SRW on $\Z^d$ ($d \in \N$). That is,
$X$ has i.i.d.~increments $\Delta_n := X_{n+1} -X_n$ such that
 if $\{ \be_1,\ldots,\be_d\}$
is the standard orthonormal basis on $\R^d$, for $i \in \{1,\ldots,d\}$,
$\Pr [ \Delta_n = \be_i ] = \Pr [ \Delta_n = - \be_i ] = (2d)^{-1}$.

Let $\F_n = \sigma (X_1,\ldots,X_n)$ and
consider the $(\F_n)_{n \in \N}$-adapted
process $Z=(Z_n)_{n \in \N}$
on $[0,\infty)$ defined
by $Z_n = \| X_n \|$.
Here $Z$ takes values in
the countable
set $\SS = \{ \| \bx \| : \bx \in \Z^d \}$.
Note that $Z$ is not in general
a Markov process:
when $d=2$,
given one of the two $\F_n$-events
$\{X_n = (5,0)\}$
and $\{X_n = (3,4)\}$
we have $Z_n = 5$ in
 each case
but $Z_{n+1}$ has two different distributions;
for instance $Z_{n+1}$ can take the value $6$
(with probability $1/4$) in the
first case, but this is impossible in the
second case.

We recall some simple facts about $\Delta_n = X_{n+1} - X_n$ in the case of SRW.
We have \begin{equation}
  \label{bound1}
   \Pr_n [ \| \Delta_n \| \leq 1  ] =1 , \as,
   ~~{\rm and} ~~
   \Exp_n  [ \Delta_n ] =\0, \as.
   \end{equation}
  Writing $\Delta_n = (\Delta_n^{(1)}, \ldots, \Delta_n^{(d)})$
  in Cartesian components, we have that
  \begin{equation}
  \label{mom2}
   \Exp_n  [  \Delta_n^{(i)}  \Delta_n^{(j)}   ] = \frac{1}{d} \1 \{ i = j\} , \as. \end{equation}
Elementary calculations based on Taylor's expansion and (\ref{bound1}) and (\ref{mom2})
show that
\begin{align*}
\Exp_n  [ Z_{n+1} -Z_n   ]
= \frac{1}{2d} \sum_{i=1}^d \left( \| X_n + \be_i \| + \| X_n - \be_i \| - 2\| X_n \| \right) \\
= \frac{1}{2 \| X_n \|} \left( 1 - \frac{1}{d} \right) + O ( \| X_n \|^{-2} ) ;
\end{align*}
in the above notation,
$\mu_1 ( n ; x) = \frac{1}{2x} (1 - \frac{1}{d} ) + O(x^{-2})$ as $x \to \infty$. As before, this asymptotic
expression is the compact notation for
\[ \sup_{n \in \N} \esssup \mu_1 (n ; x) = \frac{1}{2x} \left(1 - \frac{1}{d} \right) + O(x^{-2}), \]
together with the same expression with `$\inf$' instead of each `$\sup$'.
Similarly
\begin{align*}
\Exp_n  [ Z^2_{n+1} -Z^2_n   ]
= \frac{1}{2d} \sum_{i=1}^d \left( \| X_n + \be_i \|^2 + \| X_n - \be_i \|^2 - 2\| X_n \|^2 \right)
= 1. \end{align*}
Then since $(Z_{n+1}-Z_n)^2 = Z_{n+1}^2 -Z_n^2 - 2 Z_n (Z_{n+1}-Z_n)$ we obtain
\[ \Exp_n  [ (Z_{n+1} -Z_n)^2   ]
= \frac{1}{d} + O (\| X_n \|^{-1} ) .\]
 In particular, (\ref{drift0a}) holds
 (interpreted correctly)
 with $\beta =1$ and $\rho' = (1 - (1/d))/2$.

    \subsection{Centre of mass for simple random walk}
   \label{sec:com}

  We saw in Section \ref{sec:lampsrw} how a  mean drift
  described loosely by (\ref{drift0a}) arises from
   the process of norms of symmetric SRW.
   In this section we describe how a process with mean
   drift of the form (\ref{drift0b}) arises when considering
   the distance of a symmetric  SRW
   to its centre of mass.
         The motion of the centre of mass of a random walk
   is of interest from a physical point of view, when,
   for example, the  walk represents a growing
   polymer molecule: see e.g.\ \cite{as} and \cite{rg}, especially Chapter 6.

   The centre-of-mass process (defined by (\ref{com}))
   corresponding to a symmetric SRW
   on $\Z^d$ was studied by Grill \cite{grill},  who showed that the process
   $(G_n)_{n \in \N}$ returns to a fixed ball containing
   the origin with probability $1$ if and only if $d=1$. In particular
   the process is transient for $d \geq 2$ and
   Grill gives a sharp integral test for the rate of escape
   of the lower envelope. A consequence of his result is the following.

   \begin{theo}
   \label{grill}
   \cite{grill} Let $(X_n)_{n \in \N}$ be symmetric SRW
   on $\Z^d$ and $(G_n)_{n \in \N}$ the corresponding
   centre-of-mass process defined by (\ref{com}).
Let $d \in \{ 2,3,4,\ldots\}$. Then for any $\eps>0$,
   \[ \| G_n \| \geq ( \log n) ^{-\frac{1}{d-1} - \eps} n^{1/2}, \as, \]
   for all but finitely many $n \in \N$. On the other hand, for infinitely many $n \in \N$,
    \[ \| G_n \| \leq ( \log n) ^{-\frac{1}{d-1}} n^{1/2}, \as. \]
    \end{theo}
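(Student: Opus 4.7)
The plan is to deduce the theorem from Grill's sharp integral test in \cite{grill} rather than to reprove the transience and rate-of-escape analysis for the barycentre of SRW from scratch. For symmetric SRW on $\Z^d$ with $d \geq 2$, Grill establishes an integral criterion of the following shape: for any non-increasing positive function $\phi$ on $[1,\infty)$, the event $\{\|G_n\| \leq n^{1/2}\phi(n) \text{ for infinitely many } n\}$ has probability $0$ or $1$ according as the integral
\begin{equation*}
\int_1^\infty \phi(t)^{d-1}\,\frac{\ud t}{t}
\end{equation*}
converges or diverges. The exponent $d-1$ reflects the fact that the limit law of $n^{-1/2}G_n$ is a non-degenerate $d$-dimensional Gaussian (the CLT applied to the representation $G_n = \sum_{j=1}^n \frac{n-j+1}{n}\Delta_j$ noted earlier), whose density near the origin has order $r^{d-1}$.

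Given this tool, the theorem reduces to a routine computation. Substituting $\phi(t) = (\log t)^{-\alpha}$ and changing variables via $u = \log t$ converts the test integral into $\int^\infty u^{-\alpha(d-1)}\,\ud u$, which converges if and only if $\alpha > 1/(d-1)$. Taking $\alpha = 1/(d-1) + \eps$ gives convergence and hence $\|G_n\| > n^{1/2}(\log n)^{-1/(d-1)-\eps}$ for all but finitely many $n$,~\as, yielding the first assertion. Taking $\alpha = 1/(d-1)$ exactly gives divergence and hence $\|G_n\| \leq n^{1/2}(\log n)^{-1/(d-1)}$ for infinitely many $n$,~\as, yielding the second.

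The only real work, such as it is, lies in extracting Grill's result in precisely the form needed; the subsequent calculation is immediate. I expect no new input from the analysis of the self-interacting walk to be required here, since the theorem is a purely classical statement about symmetric SRW and its centre of mass, recorded for later comparison with Theorem \ref{srwthm} and with the asymptotic behaviour of the barycentre $G_n$ in the self-interacting model.
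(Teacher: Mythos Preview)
Your approach is exactly right and matches the paper's own treatment: the paper does not give a proof of this theorem at all, but simply states it as a direct consequence of Grill's sharp integral test (``Grill gives a sharp integral test for the rate of escape of the lower envelope. A consequence of his result is the following.''), citing \cite{grill}. Your proposal spells out the routine deduction that the paper leaves implicit, and the computation with $\phi(t)=(\log t)^{-\alpha}$ is the correct one.
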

      A crude upper bound for
  $\| G_n\|$, obtained by applying the triangle inequality
$\| G_n \| \leq \frac{1}{n} \sum_{i=1}^n \| X_i \|$
  and the
   law
  of the iterated logarithm for symmetric SRW on $\Z^d$ ($d \in \N$)
  to each $\| X_i \|$
  (see e.g.~Theorem 19.1 of \cite{rev}), is that for any $\eps>0$,
  a.s.,
 \[
   \| G_n \| \leq \frac{2}{3d^{1/2}} (1+\eps) (2 n \log \log n)^{1/2} , \]
   for all but finitely many $n\in \N$;
 it seems likely that this is an overestimate.
In $d=1$, in the analogous continuous setting, a result of Watanabe
\cite[Corollary 1, p.~237]{wat}
says that, for $B_t$ standard Brownian motion, for any $\eps>0$, for all $t$ large enough,
\[ \frac{1}{t}   \int_0^t B_s \ud s   \leq 3^{-1/2} (1+\eps) (2 t \log \log t)^{1/2}, \as, \]
  and this bound is sharp in that
the inequality fails infinitely often, a.s., when $\eps=0$. Standard strong approximation results
show that this result can be transferred to $\| G_t \|$ in $d=1$.

The next result shows how the drift equation (\ref{drift0b}) arises
in this context. Lemma \ref{srwlem} is a consequence
of the more general
Lemma \ref{lem1} below.

  \begin{lm}
  \label{srwlem}
  Let $d \in \N$.
  Suppose that $(X_n)_{n \in \N}$ is a symmetric SRW on $\Z^d$, and
  $(G_n)_{n \in \N}$ is its centre-of-mass process
  as defined by (\ref{com}). Let $\F_n := \sigma (X_1, \ldots, X_n)$ and
   $Y_n := X_n - G_n$.
  Then, a.s.,
      \begin{align*}
   \Exp_n [\| Y_{n+1} \| - \| Y_n \|   ]
 & = \left(1- \frac{1}{d} \right) \frac{1}{2\| Y_n \|} - \frac{ \| Y_n \|}{n+1}
  + O( \| Y_n \|^{-2} ) ; \\
\Exp_n [ (\| Y_{n+1} \| - \| Y_n \| )^2   ]
  & =   \frac{1}{d} + O(  \| Y_n \| n^{-1} ) + O( \| Y_n \|^{-1} ).
    \end{align*}
   \end{lm}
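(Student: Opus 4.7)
The plan starts from the elementary identity $G_{n+1} = \tfrac{n}{n+1} G_n + \tfrac{1}{n+1} X_{n+1}$, immediate from (\ref{com}). Combined with $X_{n+1} = X_n + \Delta_n$, this yields
\[ \xi_n := Y_{n+1} - Y_n = \frac{n}{n+1}\, \Delta_n - \frac{1}{n+1}\, Y_n . \]
Since the SRW takes unit steps from the fixed point $X_1$, the crude deterministic bound $\|Y_n\| \leq 2n$ holds a.s., so $\|\xi_n\| \leq 1 + \|Y_n\|/(n+1)$ is uniformly bounded and all powers $\Exp_n[\|\xi_n\|^k]$ are $O(1)$. Writing $\hat Y_n := Y_n/\|Y_n\|$, I would then expand the norm to second order,
\[ \|Y_n + \xi_n\| - \|Y_n\| = \xi_n \cdot \hat Y_n + \frac{\|\xi_n\|^2 - (\xi_n \cdot \hat Y_n)^2}{2\|Y_n\|} + O\!\left(\frac{\|\xi_n\|^3}{\|Y_n\|^2}\right), \]
valid when $\|Y_n\|$ is large, the small-$\|Y_n\|$ regime being handled by the trivial Lipschitz bound $\bigl|\|Y_{n+1}\|-\|Y_n\|\bigr|\leq\|\xi_n\|$.

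For the first moment I would apply the SRW identities (\ref{bound1})--(\ref{mom2}) to compute directly
\[ \Exp_n[\xi_n \cdot \hat Y_n] = -\frac{\|Y_n\|}{n+1}, \qquad \Exp_n\!\bigl[\|\xi_n\|^2 - (\xi_n \cdot \hat Y_n)^2\bigr] = \left(1-\tfrac{1}{d}\right)\frac{n^2}{(n+1)^2}, \]
the cross terms involving $\Delta_n$ vanishing because $\Exp_n[\Delta_n] = \0$ and $\Exp_n[(\Delta_n \cdot \hat Y_n)^2] = 1/d$. Dividing the second identity by $2\|Y_n\|$, and replacing $n^2/(n+1)^2$ by $1$ at the cost of an $O(1/(n\|Y_n\|)) = O(\|Y_n\|^{-2})$ error (using $\|Y_n\| \leq 2n$), yields the stated drift formula.

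For the second moment, squaring the Taylor expansion gives
\[ (\|Y_{n+1}\|-\|Y_n\|)^2 = (\xi_n \cdot \hat Y_n)^2 + O\!\bigl(\|\xi_n\|^3/\|Y_n\|\bigr), \]
and a parallel computation gives $\Exp_n[(\xi_n \cdot \hat Y_n)^2] = \frac{n^2}{d(n+1)^2} + \frac{\|Y_n\|^2}{(n+1)^2}$. The first summand is $\frac{1}{d} + O(1/n)$, and the second is $O(\|Y_n\|/n)$ via $\|Y_n\| \leq 2n$, producing the claimed expression. The main obstacle here is not any single calculation but the careful bookkeeping of error terms: one must track how each natural remainder, in particular the $O(1/n)$ and $O(1/(n\|Y_n\|))$ contributions, is absorbed into the clean forms $O(\|Y_n\|^{-2})$ and $O(\|Y_n\|/n) + O(\|Y_n\|^{-1})$ by exploiting the a.s.\ deterministic bound $\|Y_n\| \leq 2n$ at the appropriate places.
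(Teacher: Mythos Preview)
Your proposal is correct and follows essentially the same approach as the paper: both rely on a second-order Taylor expansion of $\|Y_{n+1}\|-\|Y_n\|$ and then evaluate the resulting moments using the SRW identities $\Exp_n[\Delta_n]=\0$ and $\Exp_n[\Delta_n^{(i)}\Delta_n^{(j)}]=\tfrac1d\1\{i=j\}$. The paper packages the expansion as the general Lemma~\ref{lem1} (writing $Y_{n+1}=\tfrac{n}{n+1}(Y_n+\Delta_n)$ and expanding $\|Y_n+\Delta_n\|$), whereas you expand $\|Y_n+\xi_n\|$ directly with $\xi_n=\tfrac{n}{n+1}\Delta_n-\tfrac{1}{n+1}Y_n$; the computations and error bookkeeping are equivalent.
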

    Neglecting   higher-order terms, the study of the process $\| X_n - G_n\|$ for SRW leads
    us to analysis of a process with drift given by (\ref{drift0b}).
    Lemma \ref{srwlem} can be generalized to zero-drift
    Markov chains $X=(X_n)_{n \in \N}$ satisfying
    appropriate versions of (\ref{bound1}) and (\ref{mom2}).
  We prove our results on SRW by applying our general results
 given in Sections \ref{prelim} and \ref{1dproc}.

 \begin{proof}[Proof of Lemma \ref{srwlem}.]
  This follows from Lemma \ref{lem1} stated and proved in Section \ref{prelim}.
     Taking expectations in (\ref{lem1eq}) and using
        (\ref{bound1}) and (\ref{mom2}) we obtain the first
        equation in the statement of the lemma,
        using the fact that $\| Y_n \| = o(n)$ a.s.\ to simplify the error terms.
         Similarly, squaring both sides of (\ref{lem1eq}) and taking expectations we obtain
 the second equation in the lemma. \end{proof}

 \begin{proof}[Proof of Theorem \ref{srwthm}.]
     Let $Z_n = \| Y_n \| = \| X_n - G_n\|$ and $\F_n = \sigma ( X_1, \ldots, X_n)$.
   Then by Lemma \ref{srwlem}, a.s.,
   \begin{align*}
    \Exp_n [ Z_{n+1} - Z_n  ] & = \left(1 - \frac{1}{d} \right) \frac{1}{2Z_n} - \frac{Z_n}{n}
   + O(n^{-2} Z_n) + O( Z_n^{-2} ) ; \\
    \Exp_n [ (Z_{n+1} - Z_n)^2 ] & =  \frac{1}{d} + O( Z_n n^{-1} ) + O(Z_n^{-1} ) .\end{align*}
   Thus (\ref{z2b}) and (\ref{z3b}) hold with $\rho' = (d-1)/(2d)$ and $\sigma^2 = 1/d$.
 It follows from  Theorems \ref{thm1} and \ref{thm2} (stated and proved in Section \ref{1dproc})
   that $Z_n$ is transient if and only if $2 \rho' > \sigma^2$, or equivalently
   $1 - (1/d) > (1/d)$,
   that is, $d>2$. \end{proof}

    \subsection{The process viewed as a new random polymer model}
 \label{poly}

 In this section we briefly summarize motivation
 of self-interacting random walks arising from
 polymer physics, and give an interpretation
 of our model described by (A1) in that context.
 Much more background is provided
 by, for instance, \cite[Section 2.2]{ms}, \cite[Chapter 7]{rg}, \cite[Chapter 7]{hughes},
 and, for the underlying physics, \cite{rc}.
Recent accounts of some of the relevant
probability theory are given in \cite{giac,holl}.

 The sites visited by the walk $X_n$ represent the monomers
 that make up a long polymer molecule in solution in $\R^d$
 (of course, physically $d \in \{2,3\}$ are most interesting).
 The line segments between successive sites $X_n$ and $X_{n+1}$
 represent the chemical bonds holding the molecule together;
 in this regard our condition of uniformly bounded increments
 in (A1) is natural. We assume that the polymer solution
 is dilute, so that interaction between different polymer molecules
 can be neglected.

 In real polymers, a phase transition is observed
  between polymers in {\em poor solvents}
  (or at low temperature) and {\em good solvents}
  (or high temperature) \cite[Chapter 7]{rc}.
  In poor solvents, a polymer molecule collapses as the attraction between
  monomers overcomes the excluded volume effect caused by the fact that no
  two monomers can occupy the same physical space. In good solvents,
  a polymer molecule exists in an extended phase where the excluded volume
  effect dominates.

  It is the extended phase that is believed to lie
  in the same universality class as SAW. Heuristic arguments
  dating back to P.J.~Flory (see e.g.~\cite[Section 2.2]{ms})
  suggest that in this phase $\| X_n \|$
  should exist on `macroscopic scale' of order $n^\nu$ for an
  exponent $\nu = \nu(d) \in [1/2,1]$, with $\nu <1$ for $d>1$ and
  $\nu > 1/2$ for $d \leq 3$. So for $d \in \{2,3\}$, the polymer
  is expected to be super-diffusive but sub-ballistic.
  According to Theorem \ref{extent}(i),
  our model defined by (A1) has macroscopic
  scale exponent $\max \{ 1/2, 1/(1+\beta) \}$ when $\rho >0$; for $\beta < 1$
  this regime therefore corresponds to polymers in the extended phase,
  where the excluded volume effect, summarized by repulsion from the centre of
  mass, dominates.
  For instance, since $\nu(2) =3/4$, in $d=2$ the `physical'
  choice of our model has $\beta =1/3$ and $\rho >0$; it is not clear
  to what extent that case of our model replicates the behaviour of SAW.

  On the other hand, the collapsed phase corresponds
  to taking $\rho <0$ in (A1), where the polymer's
  self-attraction, summarized through its centre of mass,
  is dominant. See Theorem \ref{extent}(iii) and (iv).
  Between the poor and good solvent phases, there is
  a transitional phase at the so-called
  $\theta$-{\em point} at which the temperature achieves a specific (critical) value $T = \theta$.
  Here the excluded volume effect and self-attraction
  are in balance, and the molecule behaves
  rather like a simple random walk path.
 Compare Theorem \ref{extent}(ii).

 \section{Properties of the self-interacting random walk}
 \label{prelim}

 Under the assumption (A1), we
 are going to study the process $X_n - G_n$ and in particular
 determine whether it is transient or recurrent.
 It   suffices to study   $\| X_n - G_n \|$.
 In this section we analyse  the basic
 properties of the latter process; subsequently we will
  apply our general results of Section \ref{1dproc}
 on processes that satisfy, roughly speaking, (\ref{drift0b}).

 We introduce some convenient notation that we use throughout.
 For $n \in \N$ set
 \[ Y_n := X_n - G_n, ~~~\Delta_n := X_{n+1} - X_n .\]
  We start with some elementary relations amongst $X_n$, $G_n$,
 and $Y_n$ following from  (\ref{com}).

  \begin{lm}  Suppose that $(X_n)_{n \in \N}$ is a stochastic process on $\R^d$, and
  $(G_n)_{n \in \N}$ is its centre-of-mass process
  as defined by (\ref{com}).
  For $n \in \N$ we have
   \begin{align}
  \label{gjump}
   G_{n+1} & = \frac{n}{n+1} G_n + \frac{1}{n+1} X_{n+1} ;  ~\textrm{and} \\
     \label{yeq}
   Y_{n+1} & =   \frac{n}{n+1} ( Y_n + \Delta_n  ) .\end{align}
Moreover
  $G_1 = X_1$ and for $n \in \{2,3,\ldots\}$,
  \begin{equation}
  \label{gandy}
   G_n = X_1 + \sum_{j=2}^n \frac{1}{j-1} Y_j .\end{equation}
  \end{lm}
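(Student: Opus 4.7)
The statement is elementary and follows from the definition (\ref{com}) by direct manipulation; there is no genuine obstacle, but the three identities should be proved in the natural order, each using the previous.

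First I would prove (\ref{gjump}). Starting from (\ref{com}) applied at time $n+1$, split the sum as
\[ G_{n+1} = \frac{1}{n+1}\sum_{i=1}^{n+1} X_i = \frac{1}{n+1}\bigl( n G_n + X_{n+1} \bigr), \]
which is exactly (\ref{gjump}).

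Next I would deduce (\ref{yeq}). Since $Y_{n+1} = X_{n+1} - G_{n+1}$, substituting (\ref{gjump}) and writing $X_{n+1} = X_n + \Delta_n$ gives
\[ Y_{n+1} = X_{n+1} - \frac{n}{n+1} G_n - \frac{1}{n+1} X_{n+1} = \frac{n}{n+1}\bigl( X_{n+1} - G_n \bigr) = \frac{n}{n+1}\bigl( X_n - G_n + \Delta_n \bigr), \]
which is (\ref{yeq}).

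Finally, for (\ref{gandy}), the plan is to obtain a telescoping identity for $G_n$ in terms of $Y$. From (\ref{gjump}) one has $G_{n+1} - G_n = (n+1)^{-1}(X_{n+1} - G_n)$, and substituting $X_{n+1} = Y_{n+1} + G_{n+1}$ and rearranging yields the clean recursion
\[ G_{n+1} - G_n = \frac{1}{n} Y_{n+1} \qquad (n \geq 1). \]
Summing this over $j$ from $2$ to $n$ and using $G_1 = X_1$ (immediate from (\ref{com}) with $n=1$) produces (\ref{gandy}). The only point to be careful about is the bookkeeping of indices in the telescoping sum, but this is straightforward.
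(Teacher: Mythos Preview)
Your proposal is correct and follows essentially the same route as the paper: derive (\ref{gjump}) directly from (\ref{com}), deduce (\ref{yeq}) via $Y_{n+1} = \frac{n}{n+1}(X_{n+1}-G_n)$, and obtain (\ref{gandy}) by showing $G_{n+1}-G_n = \frac{1}{n}Y_{n+1}$ and telescoping. The only cosmetic difference is that the paper reuses the intermediate identity $Y_{n+1} = \frac{n}{n+1}(X_{n+1}-G_n)$ to get the recursion for $G$, whereas you substitute $X_{n+1} = Y_{n+1}+G_{n+1}$ and rearrange; both are one-line algebra.
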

  \begin{proof}
Equation (\ref{gjump}) is immediate from (\ref{com}).
Then from (\ref{gjump}) we have that for $n \in \N$,
  \begin{align}
  \label{jj0}
  Y_{n+1}  = X_{n+1} - G_{n+1} = \frac{n}{n+1} \left( X_{n+1} - G_n \right) ,
  \end{align}
  from which (\ref{yeq}) follows since  $X_{n+1} - G_n = Y_n + \Delta_n$.
  For (\ref{gandy}), we have from (\ref{gjump}) again that for $n \in \N$,
  \[ G_{n+1} - G_n = \frac{1}{n+1} \left( X_{n+1} - G_n \right) = \frac{1}{n} Y_{n+1} ,\]
  where the final equality is obtained from (\ref{jj0}).
  Thus for $n \geq 2$,
  \[ G_n - G_1 = \sum_{j=1}^{n-1} (G_{j+1} - G_j ) = \sum_{j=1}^{n-1} \frac{1}{j} Y_{j+1} ,\]
  from which (\ref{gandy}) follows. \end{proof}

 The main result of this section concerns
 the increments of the process
 $\| Y_n \|$ under assumption (A1)
 and also possibly (A2).
 Part (i) of Proposition \ref{propinc}
 gives basic regularity properties,
 including boundedness of jumps. Part (ii)
 gives an expression for the mean drift
 when $\beta \in [0,1)$. Part (iii)
 deals with the case $\beta \geq 1$ when (A2) also holds.

   \begin{proposition}
   \label{propinc}
   Suppose that (A1) holds.
   \begin{itemize}
  \item[(i)]
    There exists $C \in (0,\infty)$ for which, for any $n \in \N$,
     \begin{align}
   \label{incbound}
  \Pr_n [ | \| Y_{n+1} \| - \| Y_n \|  | >  C   ]  =0, \as
 . \end{align}
 In addition
    \begin{align}
   \label{lem3eq}
\limsup_{n \to \infty} \| Y_n \| = \infty, \as.
\end{align}
 \item[(ii)]
    If $\beta \in [0,1)$ then, a.s.,
      \begin{align}
   \label{lem2eq}
  \Exp_n [ \| Y_{n+1} \| - \| Y_n \|  ] =
         \rho \| Y_n \|^{-\beta}
   - \frac{\| Y_n \| }{n+1}   + O ( \| Y_n \|^{-\beta} (\log \| Y_n \|)^{-2} ) . \end{align}
   \item[(iii)]
Suppose also that  (A2) holds and $\beta \geq 1$.
Then, a.s.,
       \begin{align}
   \label{lem2eq2}
  \Exp_n [ \| Y_{n+1} \| - \| Y_n \|   ] & =
        \left( \rho \1_{\{ \beta = 1 \}}  + \frac{1}{2} (d-1) \sigma^2 \right)
       \| Y_n \|^{-1}
   - \frac{\| Y_n \| }{n+1}  \nonumber\\
   & ~~
      + o( \| Y_n \|^{-1}  ( \log \| Y_n \|)^{-1}
       )  \\
  \label{mom2ex}
   \Exp_n [ (\| Y_{n+1} \| - \| Y_n \| )^2   ] & = \sigma^2 + O( n^{-1} \| Y_n \| )
  + o ( ( \log \| Y_n \|)^{-1} ).\end{align}
       \end{itemize}
     \end{proposition}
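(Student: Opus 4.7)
\medskip
\noindent\textbf{Proof plan.} The starting point is the recursion (\ref{yeq}), which rewrites as $\|Y_{n+1}\| = \tfrac{n}{n+1}\|Y_n + \Delta_n\|$ and yields the exact decomposition
\[ \|Y_{n+1}\| - \|Y_n\| = \bigl( \|Y_n + \Delta_n\| - \|Y_n\| \bigr) - \frac{1}{n+1}\|Y_n + \Delta_n\|. \]
For the jump bound (\ref{incbound}), the first bracket has modulus at most $\|\Delta_n\| \leq B$ by the reverse triangle inequality and (\ref{bound}), while the second piece is $O(1)$ using the deterministic linear bound $\|Y_n\| \leq 2\|X_1\| + 2B(n-1)$ coming from telescoping of (\ref{bound}). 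The limsup claim (\ref{lem3eq}) follows from a standard ellipticity argument based on (\ref{ue}): at each step there is $\F_n$-conditional probability at least $\eps_0$ of a jump with $\Delta_n \cdot \hat Y_n \geq \eps_0$ (or a fixed direction when $Y_n = 0$), and this cannot be consistent with $\|Y_n\|$ remaining a.s.\ bounded.

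The heart of parts (ii) and (iii) is the Taylor expansion of the Euclidean norm
\[ \|Y_n + \Delta_n\| = \|Y_n\| + \hat Y_n \cdot \Delta_n + \frac{\|\Delta_n\|^2 - (\hat Y_n \cdot \Delta_n)^2}{2 \|Y_n\|} + O\!\bigl(\|Y_n\|^{-2}\bigr), \]
valid for $\|Y_n\|$ sufficiently large by (\ref{bound}). Substituting this into the decomposition above and taking $\F_n$-conditional expectations, assumption (\ref{drift}) contributes $\rho \|Y_n\|^{-\beta} + O(\|Y_n\|^{-\beta}(\log \|Y_n\|)^{-2})$ through the first-order term, the second-order term contributes $(2\|Y_n\|)^{-1}\bigl(\Exp_n\|\Delta_n\|^2 - \Exp_n(\hat Y_n \cdot \Delta_n)^2\bigr)$, and the contraction prefactor contributes $-\|Y_n\|/(n+1) + O(1/n)$. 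In case (ii) with $\beta \in [0,1)$, the Taylor second-order piece is $O(\|Y_n\|^{-1})$, absorbed into $O(\|Y_n\|^{-\beta}(\log\|Y_n\|)^{-2})$ since $\|Y_n\|^{\beta-1}$ decays polynomially, and $O(1/n)$ is likewise absorbed using $\|Y_n\| = O(n)$, yielding (\ref{lem2eq}). In case (iii) with $\beta \geq 1$, assumption (A2) via (\ref{cov1})--(\ref{cov2}) identifies $\Exp_n\|\Delta_n\|^2 = d\sigma^2 + o((\log\|Y_n\|)^{-1})$ and $\Exp_n(\hat Y_n \cdot \Delta_n)^2 = \sigma^2 + o((\log\|Y_n\|)^{-1})$ (the latter using $\sum_i(\hat Y_n^{(i)})^2 = 1$), producing the $\tfrac{1}{2}(d-1)\sigma^2 \|Y_n\|^{-1}$ coefficient in (\ref{lem2eq2}); the indicator $\1_{\{\beta = 1\}}$ then packages the $\beta = 1$ regime (where $\rho\|Y_n\|^{-\beta}$ has the same order) with the $\beta > 1$ regime (where it is absorbed into the error).

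For (\ref{mom2ex}), squaring the basic decomposition and again invoking the Taylor expansion yields $(\|Y_{n+1}\| - \|Y_n\|)^2 = (\hat Y_n \cdot \Delta_n)^2 + O(\|Y_n\|^{-1}) + O(\|Y_n\|/n)$ up to terms whose $\F_n$-conditional expectations are smaller; taking expectation and combining with (\ref{cov1})--(\ref{cov2}) delivers $\sigma^2 + O(\|Y_n\|/n) + o((\log\|Y_n\|)^{-1})$. The principal obstacle is bookkeeping: tracking the several competing error scales --- $\|Y_n\|^{-\beta}(\log\|Y_n\|)^{-2}$, $\|Y_n\|^{-2}$, $\|Y_n\|/n$, $1/n$, and the $o((\log\|Y_n\|)^{-1})$ residuals from (A2) --- across the three regimes $\beta \in [0,1)$, $\beta = 1$, and $\beta > 1$, and verifying in each case that stray contributions are absorbed into the advertised error. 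A minor technicality is that (\ref{drift}) and (A2) control only the regime $\|Y_n\|$ large, but for bounded $\|Y_n\|$ the displayed formulae hold trivially upon enlarging constants.
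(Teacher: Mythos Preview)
Your approach is the same as the paper's: both rest on the recursion (\ref{yeq}), the Taylor expansion of $\|Y_n+\Delta_n\|$, and the evaluation of the first- and second-order terms via (\ref{drift}) and (A2). Two points where your sketch needs more care. First, for (\ref{lem3eq}) the phrase ``cannot be consistent with $\|Y_n\|$ remaining bounded'' hides the fact that the contraction $n/(n+1)$ competes with the ellipticity gain; the paper's argument considers blocks of $p$ consecutive steps on which $\Delta_i\cdot\hat Y_i\ge\eps_0$ (probability $\ge\eps_0^p$, hence infinitely often by L\'evy's Borel--Cantelli) and shows the increment of $\|Y\|^2$ over such a block is at least $p\eps_0^2/4$ provided $\|Y\|$ starts small relative to $n$. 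Second, in case (iii) your stated $O(1/n)$ from the contraction piece, obtained via the pointwise bound $|\,\|Y_n+\Delta_n\|-\|Y_n\|\,|\le B$, does \emph{not} absorb into $o(\|Y_n\|^{-1}(\log\|Y_n\|)^{-1})$: from $\|Y_n\|=O(n)$ one only gets $1/n=O(\|Y_n\|^{-1})$, which is too large. The fix is either to keep the prefactor $n/(n+1)$ on the Taylor terms (as the paper does in (\ref{lem2eq20})), so that the correction becomes $\frac{1}{n+1}\bigl(\rho\|Y_n\|^{-\beta}+\Theta_n\|Y_n\|^{-1}\bigr)=O(\|Y_n\|^{-1}/n)$, or equivalently to observe that after taking expectations the correction $\frac{1}{n+1}\Exp_n[\|Y_n+\Delta_n\|-\|Y_n\|]$ is $O(\|Y_n\|^{-(\beta\wedge1)}/n)$ rather than $O(1/n)$; either way this is $O(\|Y_n\|^{-2})$ for $\beta\ge1$ and is then absorbed.
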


     We prove
  Proposition \ref{propinc} via a series of lemmas.
 The first gives information on
 the increments of the
 process given by the distance of a general stochastic process
 to its centre-of-mass. In particular, it shows that
 $\|Y_n\|$ inherits boundedness of jumps from $X_n$, and gives
 an expression for the increments of $\|Y_n\|$ in terms
 of $\Delta_n$, the increments of $X_n$.

  \begin{lm}
  \label{lem1}
  Suppose that $(X_n)_{n \in \N}$ is a stochastic process on $\R^d$, and
  $(G_n)_{n \in \N}$ is its centre-of-mass process
  as defined by (\ref{com}). Suppose that $X_1 \in \R^d$ is fixed and that
  (\ref{bound})
  holds for some $B \in (0,\infty)$.
     There exists $C \in (0,\infty)$ for which, for all $n \in \N$,
     (\ref{incbound}) holds.
  Moreover,  a.s.,
   \begin{align}
   \label{lem1eq}
  \| Y_{n+1} \| - \| Y_n \| =
    \frac{n   }{n+1}
  \left(     \frac{ Y_n \cdot \Delta_n }{\| Y_n \| }
     + \frac{\| \Delta_n \|^2}{2 \| Y_n \|} - \frac{ (Y_n \cdot \Delta_n)^2}{2 \| Y_n \|^3} \right)
     + O ( \|Y_n \|^{-2} )
   - \frac{\|Y_n \| }{n+1}  . \end{align}
  \end{lm}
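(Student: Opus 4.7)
The plan rests entirely on the recursion (\ref{yeq}), namely $Y_{n+1} = \frac{n}{n+1}(Y_n + \Delta_n)$, which has just been proved in the excerpt. Starting from this identity, everything reduces to elementary Euclidean-norm calculations involving $Y_n + \Delta_n$, since
\[
 \| Y_{n+1} \| - \| Y_n \| \;=\; \frac{n}{n+1}\bigl(\| Y_n + \Delta_n\| - \| Y_n \|\bigr) \;-\; \frac{\| Y_n \|}{n+1}.
\]

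First, for the uniform jump bound (\ref{incbound}): starting from $X_1$ fixed and iterating $\|\Delta_i\| \leq B$, one has deterministically $\| X_i \| \leq \| X_1 \| + B(i-1)$; averaging gives $\| G_n \| \leq \| X_1 \| + B(n-1)/2$, and therefore $\| Y_n \|/(n+1)$ is bounded by a constant $C_1$ depending only on $\| X_1 \|$ and $B$. Applying the triangle inequality to the displayed equation and using $\| \Delta_n \| \leq B$ then yields $\bigl|\| Y_{n+1} \| - \| Y_n \|\bigr| \leq C_1 + B$, a.s., uniformly in $n$, which is (\ref{incbound}).

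Second, for the expansion (\ref{lem1eq}): I would Taylor-expand $\| Y_n + \Delta_n \|$ around $\| Y_n \|$, working in the regime where $\| Y_n \|$ is large compared with $B$. Writing $a := \| Y_n \|$ and using $\| Y_n + \Delta_n \|^2 = a^2 + 2 Y_n \cdot \Delta_n + \| \Delta_n \|^2$, factor out $a^2$ and apply $\sqrt{1+u} = 1 + u/2 - u^2/8 + O(u^3)$ to $u = (2 Y_n \cdot \Delta_n + \| \Delta_n\|^2)/a^2$. The linear term gives $Y_n \cdot \Delta_n /a$, the $u/2$ contribution from $\|\Delta_n\|^2$ gives $\|\Delta_n\|^2/(2a)$, and the $-u^2/8$ term, after keeping only the leading $(2 Y_n \cdot \Delta_n)^2/a^4$ piece (the others being of smaller order since $\| \Delta_n \| \leq B$), produces $-(Y_n \cdot \Delta_n)^2/(2 a^3)$. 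All cross terms and the cubic remainder are bounded uniformly by $O(a^{-2})$, where the implicit constant depends only on $B$. Multiplying by $n/(n+1)$ and subtracting $\| Y_n \|/(n+1)$ gives (\ref{lem1eq}) exactly.

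The only mild obstacle is that the Taylor expansion requires $\| Y_n \|$ to be bounded away from zero (say $\| Y_n \| \geq 2B$) so that $|u| < 1$; when $\| Y_n \|$ is small the formula (\ref{lem1eq}) is to be read with the whole right-hand side absorbed into the $O(\| Y_n \|^{-2})$ error term, which is consistent with the paper's asymptotic convention. Care is needed to verify that the implicit constants in $O(\cdot)$ are genuinely uniform in $n$ and $\omega$; this follows because every estimate depends only on the deterministic constant $B$ from (\ref{bound}) and on $\| Y_n \|$ itself.
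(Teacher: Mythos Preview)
Your proposal is correct and follows essentially the same route as the paper's own proof: both derive the identity $\|Y_{n+1}\| - \|Y_n\| = \tfrac{n}{n+1}(\|Y_n+\Delta_n\|-\|Y_n\|) - \tfrac{\|Y_n\|}{n+1}$ from (\ref{yeq}), bound $\|Y_n\|/(n+1)$ via the crude linear growth of $\|X_n\|$ and $\|G_n\|$ to obtain (\ref{incbound}), and then Taylor-expand $(1+u)^{1/2}$ with $u = (2Y_n\cdot\Delta_n + \|\Delta_n\|^2)/\|Y_n\|^2$ to extract the three displayed terms with an $O(\|Y_n\|^{-2})$ remainder. Your treatment of the small-$\|Y_n\|$ regime and of the uniformity of the implicit constants matches the paper's conventions.
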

    \begin{proof}
    We work with the process $(\|Y_n \|)_{n \in \N}$.
    From (\ref{bound}) and the triangle inequality, we have the
    simple bound $\| X_n \| \leq \| X_1 \| + B (n-1)$ a.s., for all $n \in \N$.
    Applying the triangle inequality in (\ref{com}) then yields the
    equally simple bound
    \[ \| G_n \| \leq \frac{1}{n} \sum_{i=1}^n ( \| X_1 \| + B (i-1) )
     \leq \| X_1 \| + \frac{Bn}{2} .\]
     Combining these two inequalities together with the fact that $\| Y_n \| \leq \|X_n \| + \| G_n\|$,
     it follows that $\| Y_n \| \leq 2 \| X_1 \| + (3 Bn/2)$ a.s., for all $n \in \N$.
Then from the triangle inequality and (\ref{yeq}) we have that
  \[ \left| \| Y_{n+1} \| - \| Y_n \| \right| \leq \| Y_{n+1} - Y_n \|
  \leq \frac{1}{n} \| Y_n \| + \| \Delta_n \| \leq \frac{5B}{2} + \frac{2 \| X_1 \|}{n} ,\]
  a.s., by (\ref{bound}), and this lattermost quantity is uniformly bounded. Thus we have
    (\ref{incbound}).

  For the final statement of the lemma, note that, from (\ref{yeq}),
  \begin{align}
  \label{change}
    \| Y_{n+1} \| = \frac{n}{n+1} \left( \| Y_n \|^2 + \| \Delta_n \|^2 + 2 Y_n \cdot \Delta_n  \right)^{1/2} .\end{align}
  Now writing $\by = Y_n$ for convenience, we obtain from (\ref{change}) that
  \begin{align}
  \label{change2}
  \| Y_{n+1} \| - \| Y_n \| = \| \by \| \left[ \frac{n}{n+1}
  \left( 1 + \frac{\| \Delta_n  \|^2 + 2 \by \cdot \Delta_n }{\| \by \|^2 } \right)^{1/2} - 1 \right] .
  \end{align}
  Using Taylor's formula for $(1+x)^{1/2}$ with Lagrange remainder in (\ref{change2}) implies
  that
  \begin{align*}
  \| Y_{n+1} \| - \| Y_n \| =   \frac{n \| \by \| }{n+1}
  \left(   \frac{\| \Delta_n  \|^2 + 2 \by \cdot \Delta_n }{2 \| \by \|^2 }
  -   \frac{ ( \| \Delta_n  \|^2 + 2 \by \cdot \Delta_n )^2 }{8 \| \by \|^4 }  
  + O ( \| \by \|^{-3} )
  \right)  - \frac{\| \by \| }{n+1}  , \end{align*}
  using (\ref{bound}) for the error bound.
 Simplifying and again using (\ref{bound}), this becomes
   \begin{align*}
  \| Y_{n+1} \| - \| Y_n \| =
    \frac{n \| \by \| }{n+1}
  \left(    \frac{\| \Delta_n  \|^2}{2 \| \by \|^2}  + \frac{ \by \cdot \Delta_n }{\| \by \|^2 }
  - \frac{ (   \by \cdot \Delta_n )^2}{2 \| \by \|^4 }    + O ( \| \by \|^{-3} )
  \right)  - \frac{\| \by \| }{n+1}  . \end{align*}
  Then equation (\ref{lem1eq}) follows.
     \end{proof}

  Now we turn   to the model defined by (A1), starting with
   the drift
   of $\| Y_n \|$. For $a, b \in \R$, we use the standard notation $a \wedge b := \min \{ a,b\}$.

   \begin{lm}
   \label{lem2}
    Suppose that (A1) holds.
 Then
  the drift of $\| Y_n \|$ satisfies, a.s.,
     \begin{align}
   \label{lem2eq20}
  \Exp_n [ \| Y_{n+1} \| - \| Y_n \|   ] & =
    \frac{n   }{n+1}
      \left( \rho \| Y_n \|^{-\beta} + \Theta_n \| Y_n \|^{-1} \right)
   - \frac{\| Y_n \| }{n+1}  \nonumber\\
   & ~~~ + O( \| Y_n \|^{-(1 \wedge \beta)} (\log \| Y_n \|)^{-2} ) ,\end{align}
   where $\Theta_n$ is the $\F_n$-measurable random variable given by
     \begin{equation}
\label{rhop}
 \Theta_n =   \frac{1}{2} \| Y_n \|^{-2} \Exp_n [ \| Y_n \|^2 \| \Delta_n \|^2 - (Y_n \cdot
\Delta_n)^2   ] .\end{equation}
Moreover, there exists  $C < \infty$ such that $\Theta_n \in [0,C]$ a.s., and
if $\beta \in [0,1)$, (\ref{lem2eq}) holds.
  \end{lm}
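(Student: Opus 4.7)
The plan is to use (\ref{lem1eq}) from Lemma \ref{lem1} as a pathwise identity and take conditional expectations term by term. The Taylor decomposition there expresses $\|Y_{n+1}\|-\|Y_n\|$ as a linear piece in $\Delta_n$, two quadratic pieces, the deterministic contraction $-\|Y_n\|/(n+1)$, and an $O(\|Y_n\|^{-2})$ remainder; the drift hypothesis (\ref{drift}) from (A1) controls the linear piece, and the two quadratic pieces collapse by definition to $\Theta_n/\|Y_n\|$.

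More concretely, I would first compute $\Exp_n[Y_n \cdot \Delta_n / \|Y_n\|] = \hat Y_n \cdot \Exp_n[\Delta_n]$. By (\ref{drift}), $\Exp_n[\Delta_n] = \rho\|Y_n\|^{-\beta}\hat Y_n + \bz_n$ with $\|\bz_n\| = O(\|Y_n\|^{-\beta}(\log\|Y_n\|)^{-2})$ a.s., so by Cauchy--Schwarz $\hat Y_n \cdot \Exp_n[\Delta_n] = \rho\|Y_n\|^{-\beta} + O(\|Y_n\|^{-\beta}(\log\|Y_n\|)^{-2})$. The conditional expectation of $\frac{\|\Delta_n\|^2}{2\|Y_n\|} - \frac{(Y_n\cdot\Delta_n)^2}{2\|Y_n\|^3}$ equals $\Theta_n/\|Y_n\|$ by definition (\ref{rhop}). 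Assembling these into the expectation of (\ref{lem1eq}) and absorbing the $O(\|Y_n\|^{-2})$ remainder from Lemma \ref{lem1} into the larger $O(\|Y_n\|^{-(1\wedge\beta)}(\log\|Y_n\|)^{-2})$ error (valid for $\|Y_n\|$ sufficiently large) gives (\ref{lem2eq20}). The bounds on $\Theta_n$ are immediate: nonnegativity follows pathwise from Cauchy--Schwarz $(Y_n\cdot\Delta_n)^2 \leq \|Y_n\|^2\|\Delta_n\|^2$, and the upper bound $\Theta_n \leq B^2/2$ from the increment bound (\ref{bound}).

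For (\ref{lem2eq}) under $\beta \in [0,1)$ I would absorb the two extraneous contributions in (\ref{lem2eq20}) into the target error. Writing $n/(n+1) = 1 - 1/(n+1)$ and using the deterministic a.s.\ bound $\|Y_n\| \leq Cn$ (obtained in the proof of Lemma \ref{lem1}) gives $\frac{n}{n+1}\rho\|Y_n\|^{-\beta} = \rho\|Y_n\|^{-\beta} + O(\|Y_n\|^{-\beta-1})$, while $\frac{n}{n+1}\Theta_n\|Y_n\|^{-1} = O(\|Y_n\|^{-1})$. Both extraneous terms are of order at most $\|Y_n\|^{-1}$, which for $\beta < 1$ is dominated by $\|Y_n\|^{-\beta}(\log\|Y_n\|)^{-2}$ once $\|Y_n\|^{1-\beta}(\log\|Y_n\|)^{-2}$ is large, i.e., for $\|Y_n\|$ sufficiently large. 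The main obstacle is not any single estimate but bookkeeping: tracking several error terms of different orders and confirming that the $O(\|Y_n\|^{-2})$ remainder from Lemma \ref{lem1} together with the $O(\|Y_n\|^{-1})$ contributions all disappear into the claimed error; this requires restricting to the asymptotic regime $\|Y_n\|$ large, which is the standard interpretation of the $O(\cdot)$ notation and is consistent with (\ref{lem3eq}).
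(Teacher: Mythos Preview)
Your proposal is correct and follows essentially the same approach as the paper: take conditional expectations in the pathwise identity (\ref{lem1eq}), use (\ref{drift}) for the linear term, recognize the quadratic terms as $\Theta_n/\|Y_n\|$, bound $\Theta_n$ via Cauchy--Schwarz and (\ref{bound}), and absorb remainders into the stated error. Your treatment of (\ref{lem2eq}) for $\beta\in[0,1)$ spells out the error bookkeeping (using $\|Y_n\|\leq Cn$ and $\|Y_n\|^{-1}=o(\|Y_n\|^{-\beta}(\log\|Y_n\|)^{-2})$) that the paper leaves implicit in the one-line remark ``Then (\ref{lem2eq}) follows when $\beta\in[0,1)$''.
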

  \begin{proof} Taking expectations in (\ref{lem1eq}), using
the fact that
\[   \| Y_n \|^{-1} \Exp_n [ Y_n \cdot \Delta_n   ] = \rho \| Y_n \|^{-\beta}  + O ( \| Y_n\|^{-\beta} (\log \| Y_n \|)^{-2} ) , \]
by (\ref{drift}), we obtain
\begin{align*}
  \Exp_n [ \| Y_{n+1} \| - \| Y_n \|   ]
=
& \frac{n}{n+1} \left( \rho \| Y_n \|^{-\beta} + \frac{1}{2\| Y_n \|^3} \Exp_n [ \| Y_n \|^2 \| \Delta_n \|^2 - (Y_n \cdot
\Delta_n)^2   ] \right) \\
&
+ O( \| Y_n \|^{-(\beta \wedge 1)} ( \log \| Y_n \| )^{-2} ) - \frac{ \| Y_n \|}{n+1} .\end{align*}
By the fact that $|Y_n \cdot \Delta_n| \leq \| Y_n \| \| \Delta_n\|$
and the jumps bound (\ref{bound})  we have that
\[ 0 \leq \Exp_n [ \| Y_n \|^2 \| \Delta_n \|^2 - (Y_n \cdot
\Delta_n)^2   ] \leq C \| Y_n \|^2, \as, \]
for some $C \in (0,\infty)$.
Thus defining $\Theta_n$ by (\ref{rhop}) we obtain (\ref{lem2eq20}) and the fact that
$\Theta_n \in [0,C]$ a.s..
Then (\ref{lem2eq}) follows when $\beta \in [0,1)$.
   \end{proof}

   The next result shows
  how the ellipticity condition (\ref{ue})
  leads to (\ref{lem3eq}).

 \begin{lm}
 \label{lem3}
  Suppose that (A1) holds.
  Then $\limsup_{n \to \infty} \| Y_n \| = \infty$ a.s..
  \end{lm}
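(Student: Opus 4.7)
The plan is to argue by contradiction, using only the recursion (\ref{yeq}) and the ellipticity condition (\ref{ue}); the drift hypothesis (\ref{drift}) plays no role. Suppose $\Pr[\limsup_n \|Y_n\| < \infty] > 0$. Since that event is the countable union $\bigcup_{L, N \in \N} A_{L,N}$ with $A_{L,N} := \{\|Y_n\| \leq L \text{ for all } n \geq N\}$, one can fix $L, N$ with $\Pr[A_{L,N}] > 0$ and then derive $\Pr[A_{L,N}] = 0$.

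The heart of the argument is a one-step \emph{push} estimate: there exists $N_L$ (depending only on $L$ and $\eps_0$) such that for every $n \geq N_L$, on the event $\{\|Y_n\| > L - \eps_0/4\}$,
\[
\Pr_n [ \|Y_{n+1}\| > L ] \geq \eps_0.
\]
I would obtain this by applying (\ref{ue}) with the $\F_n$-measurable direction $\hat Y_n := Y_n/\|Y_n\|$ (replaced by an arbitrary fixed unit vector on the exceptional set $\{Y_n = \0\}$): with conditional probability at least $\eps_0$, $\Delta_n \cdot \hat Y_n \geq \eps_0$, so $\|Y_n + \Delta_n\|^2 \geq \|Y_n\|^2 + 2\eps_0\|Y_n\| + \eps_0^2 = (\|Y_n\| + \eps_0)^2$, and then (\ref{yeq}) gives $\|Y_{n+1}\| \geq \frac{n}{n+1}(\|Y_n\| + \eps_0) \geq \|Y_n\| + 3\eps_0/4$ once $(L+\eps_0)/(n+1) \leq \eps_0/4$. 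On $\{\|Y_n\| > L - \eps_0/4\}$ this forces $\|Y_{n+1}\| > L$.

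With the push in hand I would apply L\'evy's conditional Borel-Cantelli lemma to the events $E_n := \{\|Y_{n+1}\| > L\}$ and $B_n := \{\|Y_n\| > L - \eps_0/4\}$: the push yields $\sum_n \Pr_n [E_n] = \infty$ on $\{B_n \text{ i.o.}\}$, so $\{B_n \text{ i.o.}\} \subseteq \{E_n \text{ i.o.}\}$ almost surely. But on $A_{L,N}$ the event $E_n$ fails for all $n \geq N$, so $\|Y_n\| \leq L - \eps_0/4$ eventually on $A_{L,N}$. Iterating this reduction with $L_k := L - k\eps_0/4$ in place of $L$ produces $\|Y_n\| \leq L_k$ eventually on $A_{L,N}$ for every $k \in \N$, which becomes impossible as soon as $L_k < 0$. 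Hence $\Pr[A_{L,N}] = 0$, completing the proof. The step most requiring care is the push estimate, specifically the application of (\ref{ue}) to the random $\F_n$-measurable direction $\hat Y_n$ and the handling of the degenerate case $Y_n = \0$; both are routine, the latter because even then the same reasoning (with a fixed $\be$) gives $\|Y_{n+1}\| \geq \eps_0/2$ with conditional probability at least $\eps_0$.
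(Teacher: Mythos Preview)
Your argument is correct and takes a genuinely different route from the paper's. One small point in your push estimate deserves care: the intermediate inequality $\frac{n}{n+1}(\|Y_n\|+\eps_0)\ge \|Y_n\|+3\eps_0/4$ requires $(\|Y_n\|+\eps_0)/(n+1)\le \eps_0/4$, not merely $(L+\eps_0)/(n+1)\le \eps_0/4$; on $B_n$ alone $\|Y_n\|$ may be arbitrarily large. The conclusion $\|Y_{n+1}\|>L$ is nevertheless correct, since $\frac{n}{n+1}(\|Y_n\|+\eps_0)$ is increasing in $\|Y_n\|$ and already exceeds $L$ at $\|Y_n\|=L-\eps_0/4$ once $n>4L/(3\eps_0)$. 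So the push holds on all of $B_n$, just via monotonicity rather than the displayed chain of inequalities.

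The paper's proof neither argues by contradiction nor iterates on $L$. Instead it fixes $p\in\N$, works in blocks of $p$ consecutive steps, and considers the event $F_{n,1}=\bigcap_{i=np}^{np+p-1}\{\Delta_i\cdot\hat Y_i\ge\eps_0\}$ (which by (\ref{ue}) has $\Pr_n[F_{n,1}]\ge\eps_0^p$ and hence occurs i.o.\ by L\'evy's conditional Borel--Cantelli) together with the control event $F_{n,2}=\{\|Y_{np}\|\le \eps_0 np/16\}$. On $F_{n,1}\cap F_{n,2}$ a direct computation with the identity for $\|Y_{i+1}\|^2-\|Y_i\|^2$ yields $\|Y_{(n+1)p}\|^2\ge p\eps_0^2/4$, while on $F_{n,2}^{\rm c}$ the process is already large; either way $\limsup_n\|Y_n\|\ge p^{1/2}\eps_0/2$, and $p\to\infty$ finishes. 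Your one-step push plus downward induction on $L$ trades the block construction for a cleaner single-step estimate at the cost of an iterative descent; the paper's block argument avoids iteration but must control $\|Y_i\|$ uniformly over the block, whence the auxiliary event $F_{n,2}$. Both proofs rest on exactly the same ingredients---ellipticity (\ref{ue}), the recursion (\ref{yeq}), and L\'evy's conditional Borel--Cantelli---and, as you note, neither uses the drift assumption (\ref{drift}).
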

     \begin{proof}
      We have from (\ref{change}) that
  \begin{equation}
  \label{0422a}
   \| Y_{n+1} \|^2 - \| Y_n \|^2 = \left( \frac{n}{n+1} \right)^2 \left( \| \Delta_n \|^2 + 2 Y_n \cdot \Delta_n \right)
   - \frac{2n+1}{(n+1)^2} \| Y_n \|^2 .\end{equation}
Fix $p \in \N$, and define   $F_{n,1} := \cap_{i=np}^{np+(p-1)} \{ \Delta_i \cdot \hat Y_i \geq \eps_0 \}$ and
$F_{n,2} := \{ \| Y_{np} \| \leq \frac{\eps_0 np}{16} \}$. Fix also $n_p \in \N$ with $\eps_0 n_p  \geq 16
C$, where $C$ is as in (\ref{incbound}), and consider  $n \geq n_p$ only.
By   (\ref{ue}) we have that   $\Pr_n [ F_{n,1}] \geq \eps_0^p$ a.s.,  and
 hence   L\'evy's extension of the second Borel--Cantelli lemma (see e.g.\
\cite[Theorem 5.3.2]{durrett}) implies that
$\Pr [F_{n,1}\; {\rm i.o.}]=1$. 

Now, observe from  (\ref{incbound}) that
$\|Y_{i+1}\| \leq \| Y_i \| + C$, a.s., which implies   on $F_{n,2}$ that
$\|Y_i\| \leq  \frac{1}{8}\eps_0 np$  for all $i \in \{ np, \ldots np+(p-1) \}$.
   Then, on $F_{n,1} \cap F_{n,2}$, we obtain  from (\ref{0422a}) that, a.s.,
 \begin{align*}
 \| Y_{i+1} \|^2 - \| Y_i \|^2 \geq -\frac{2}{i} \| Y_i \|^2 + \frac{1}{4} \left( \eps_0^2 + 2 \eps_0 \| Y_i \| \right)
 \geq \frac{1}{4} \eps_0 \| Y_i \| + \frac{1}{4} \eps_0^2 \geq  \frac{1}{4} \eps_0^2,
\end{align*}
 for any $i$ with $np \leq i \leq np+(p-1)$ and any $n \geq n_p$.
  Hence on $F_{n,1} \cap F_{n,2}$, a.s.,
  \begin{align*}
     \| Y_{(n+1)p} \|^2 = \| Y_{np} \|^2
 +\sum_{i=np}^{np+(p-1)}(\| Y_{i+1} \|^2 - \| Y_i \|^2)
    \geq p\eps_0^2/4 
   .\end{align*}
Thus, up to sets of probability zero,
 $\{ ( F_{n,1} \cap F_{n,2} ) \; {\rm i.o.}\} \subseteq \{  \limsup_{n \to \infty} \| Y_n \| \geq p^{1/2}\eps_0/2\}$.
Moreover, by definition of $F_{n,2}$,
 $\{ F_{n,2}^{\rm c} \; {\rm i.o.} \} \subseteq \{ \limsup_{n \to \infty} \| Y_n \| = \infty \}$.
Since $\{F_{n,1} \; {\rm i.o.} \} \subseteq \{  ( F_{n,1} \cap F_{n,2} ) \; {\rm i.o.}\} \cup \{ F_{n,2}^{\rm c} \; {\rm i.o} \}$, 
it follows that $\{ F_{n,1} \; {\rm i.o.} \} \subseteq \{  \limsup_{n \to \infty} \| Y_n \| \geq p^{1/2}\eps_0/2 \}$.
Since $p$ was arbitrary, the result follows
from the fact that $\Pr [F_{n,1}\; {\rm i.o.}]=1$, as shown in the first part of this proof.
\end{proof}

  When (A1) holds with $\beta \geq 1$,
  we need more regularity to obtain a well-behaved
  version of (\ref{lem2eq20}).
  Thus we
  impose (A2) and use the following result,
 which in addition gives an expression for the second moment
 of the increment $\|Y_{n+1} \| - \| Y_n \|$.

   \begin{lm}
  \label{lem4}
    Suppose that (A1) and (A2) hold.
  Then $\Theta_n$ as defined by (\ref{rhop}) satisfies
  \begin{align}
  \label{rhopex}
   \Theta_n =   \frac{1}{2} (d-1) \sigma^2 + o( ( \log \| Y_n \|)^{-1} ), \as.\end{align}
  Moreover,
  (\ref{mom2ex}) holds.
  \end{lm}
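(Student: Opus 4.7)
The plan is to prove the two assertions of the lemma separately. For (\ref{rhopex}), I would use Lagrange's identity
\begin{equation*}
\| Y_n \|^2 \| \Delta_n \|^2 - ( Y_n \cdot \Delta_n )^2
= \sum_{1 \leq i < j \leq d} \bigl( Y_n^{(i)} \Delta_n^{(j)} - Y_n^{(j)} \Delta_n^{(i)} \bigr)^2 ,
\end{equation*}
to expose the role of (A2). Taking $\Exp_n$ after expanding each square, the $(i,j)$-summand becomes $(Y_n^{(i)})^2 \Exp_n[(\Delta_n^{(j)})^2] + (Y_n^{(j)})^2 \Exp_n[(\Delta_n^{(i)})^2] - 2 Y_n^{(i)} Y_n^{(j)} \Exp_n[\Delta_n^{(i)} \Delta_n^{(j)}]$. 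By (\ref{cov1}) the first two contributions equal $\sigma^2 [(Y_n^{(i)})^2 + (Y_n^{(j)})^2] + o(\|Y_n\|^2(\log\|Y_n\|)^{-1})$; by (\ref{cov2}), together with the trivial bound $|Y_n^{(i)} Y_n^{(j)}| \leq \|Y_n\|^2$, the third contribution is $o(\|Y_n\|^2(\log\|Y_n\|)^{-1})$. Summing over $i<j$ and using $\sum_{i<j}[(Y_n^{(i)})^2 + (Y_n^{(j)})^2] = (d-1)\|Y_n\|^2$, then dividing by $2\|Y_n\|^2$, yields (\ref{rhopex}).

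For (\ref{mom2ex}) I would square (\ref{lem1eq}). Writing its right-hand side as $A_n + R_n$ with $A_n := \frac{n}{n+1}\cdot\frac{Y_n \cdot \Delta_n}{\|Y_n\|}$, the bound (\ref{bound}) gives $|A_n| \leq B$, while $|R_n| \leq C(\|Y_n\|^{-1} + \|Y_n\|/n)$ a.s., using the crude estimate $\|Y_n\| = O(n)$ a.s.\ established in the proof of Lemma \ref{lem1}. So
\begin{equation*}
\Exp_n[(\|Y_{n+1}\| - \|Y_n\|)^2] = \Exp_n[A_n^2] + 2\Exp_n[A_n R_n] + \Exp_n[R_n^2].
\end{equation*}
Expanding $(Y_n \cdot \Delta_n)^2 = \sum_{i,j} Y_n^{(i)} Y_n^{(j)} \Delta_n^{(i)} \Delta_n^{(j)}$ and applying (A2) coordinatewise---the diagonal contributing $\sigma^2 \|Y_n\|^2$ and the off-diagonal bounded by $o((\log\|Y_n\|)^{-1}) \sum_{i\neq j}|Y_n^{(i)} Y_n^{(j)}| = o(\|Y_n\|^2(\log\|Y_n\|)^{-1})$---I would get $\Exp_n[A_n^2] = \sigma^2 + O(n^{-1}) + o((\log\|Y_n\|)^{-1})$. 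The remaining terms are absorbed by $\Exp_n[|A_n R_n| + R_n^2] = O(\|Y_n\|^{-1}) + O(\|Y_n\|^2/n^2) = o((\log\|Y_n\|)^{-1}) + O(n^{-1}\|Y_n\|)$, using $\|Y_n\|^2/n^2 \leq C' \|Y_n\|/n$ (again from $\|Y_n\| = O(n)$).

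The hard part will be the careful bookkeeping of error terms; in particular, the cross term $\Exp_n[A_n R_n]$ must be handled by absolute-value estimates, since $R_n$ depends on $\Delta_n$ and so cannot be pulled out of the conditional expectation. Lagrange's identity is the key device making the dependence on (A2) transparent and linear in both computations, turning (\ref{cov1})--(\ref{cov2}) directly into the precise constants $\tfrac12(d-1)\sigma^2$ and $\sigma^2$.
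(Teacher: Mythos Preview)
Your proposal is correct and follows essentially the same route as the paper. The paper computes $\Exp_n[\|\Delta_n\|^2]=d\sigma^2+o((\log\|Y_n\|)^{-1})$ and $\Exp_n[(Y_n\cdot\Delta_n)^2]=\sigma^2\|Y_n\|^2+o(\|Y_n\|^2(\log\|Y_n\|)^{-1})$ separately and subtracts, whereas you use Lagrange's identity to compute the difference directly; this is a purely cosmetic reorganisation, and for (\ref{mom2ex}) both you and the paper square (\ref{lem1eq}), isolate the leading term $\|Y_n\|^{-2}\Exp_n[(Y_n\cdot\Delta_n)^2]$, and bound the rest by $O(n^{-1}\|Y_n\|)+O(\|Y_n\|^{-1})$. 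One small bookkeeping slip: in your bound $\Exp_n[|A_nR_n|+R_n^2]=O(\|Y_n\|^{-1})+O(\|Y_n\|^2/n^2)$ the cross term $A_nR_n$ also contributes $O(\|Y_n\|/n)$ directly, but this is harmless since your final expression $o((\log\|Y_n\|)^{-1})+O(n^{-1}\|Y_n\|)$ absorbs it anyway.
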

  \begin{proof}
   First we prove (\ref{rhopex}).
  We have that
 \[
   \Exp_n [ \| \Delta_n \|^2   ] = \sum_{i=1}^d \Exp_n [ (\Delta_n^{(i)})^2
   ] = d \sigma^2 + o( ( \log \| Y_n \|)^{-1} ) ,\]
  by (\ref{cov1}). Also if $Y_n = (y_1,\ldots,y_d) \in \R^d$,
  with the convention that an empty sum is $0$,
  \begin{align}
  \label{lem4a}
  \Exp_n [ (Y_n \cdot \Delta_n)^2   ]
  & = \sum_{i=1}^d y_i^2 \Exp_n [ (\Delta_n^{(i)})^2   ]
  + 2 \sum_{i=2}^d \sum_{j=1}^i y_i y_j \Exp_n [ \Delta_n^{(i)} \Delta_n^{(j)}   ] \nonumber\\
  & = \| Y_n \|^2  \left[ \sigma^2 + o ( (\log \| Y_n \|)^{-1} ) \right] ,\end{align}
  by (\ref{cov1}) and (\ref{cov2}).
  Then (\ref{rhopex}) follows from (\ref{rhop}).

  Next we prove (\ref{mom2ex}). Squaring both sides of (\ref{lem1eq}) and taking expectations
  we obtain
  \[ \Exp_n [ (\| Y_{n+1} \| - \| Y_n \| )^2   ]
  =   \| Y_n \|^{-2} \Exp_n [ (Y_n \cdot
  \Delta_n)^2   ] + O( n^{-1} \| Y_n \| ) + O( \| Y_n \|^{-1} ).\]
  Now using (\ref{lem4a}) yields (\ref{mom2ex}).
  \end{proof}

  \begin{proof}[Proof of Proposition \ref{propinc}.]
   We  collect results from Lemmas \ref{lem1}, \ref{lem2}, \ref{lem3}, and \ref{lem4}. \end{proof}

  \section{Recurrence classification for processes satisfying equation (\ref{drift0b})}
  \label{1dproc}

  \subsection{Introduction}

  In this section we state general results for processes
  with drift of the form (\ref{drift0}). We will later apply
  these results to the process $\| X_n - G_n\|$ satisfying
  (A1) (and maybe also (A2)), but for this section
  we work in some generality.

  Let $(Z_n)_{n \in \N}$ be a stochastic process taking
  values in an unbounded subset $\SS$ of
  $[0,\infty)$, adapted to a filtration $(\F_n)_{n \in \N}$.
  Recall the definition of $\mu_k (n ; x)$ from (\ref{mudef}), so that
  $\Exp_n [ (Z_{n+1} - Z_n)^k ] = \mu_k ( n ; Z_n )$ a.s..
  As discussed in Section \ref{sec:lampsrw}, the case
  where $\mu_2 (n ; x)$ is $O(1)$ and $\mu_1 (n ; x) \to 0$
  as $x \to \infty$ arises often in applications;
   the case where $\mu_1 (n ; x) \to 0$ uniformly
  in $n$ is sometimes known as Lamperti's problem after
  Lamperti's work \cite{lamp1,lamp2,lamp3}.
  Roughly speaking, the Lamperti problem has
  $\mu_1 (n ; x) \approx \rho x^{-\beta}$, $\beta >0$, $\rho \in \R$,
  ignoring higher-order terms.
  Results of Lamperti \cite{lamp1,lamp3} imply that the case
  $\beta =1$ is critical from the point of view of the recurrence
  classification. The supercritical case, when $\beta \in [0,1)$, $\rho>0$,
  has also been studied (see \cite{superlamp} and references therein).

  In this section
  we study the analogous problem for which
  $\mu_1 (n ; x) \approx \rho x^{-\beta} - (x/n)$. In keeping with the
  applications of the present paper, and to ease technical
  difficulties, we adopt
  some stronger regularity assumptions than imposed
  in \cite{lamp1,lamp3} or \cite{superlamp}. Nevertheless,
  this version of the problem is more difficult than
  the classical case (without the extra $-x/n$ term in the
  drift). Thus although the ideas in this section
  are related to those in \cite{lamp1,lamp3} and \cite{superlamp},
  we have to proceed somewhat differently. In particular,
  to obtain our $\beta < 1$ law of large numbers in this setting
  (an analogue of \cite[Theorem 2.3]{superlamp}
  for the standard Lamperti case), we use a `stochastic
  approximation' result (Lemma \ref{lower-lm}), the proof of which uses 
  ideas somewhat
  similar to those in \cite{mv2,superlamp}.

  We impose some regularity conditions on $(Z_n)_{n \in \N}$.
  Suppose that there exists $C \in (0,\infty)$ such that
for all $n \in \N$,
  \begin{equation}
  \label{z1b}
   \Pr_n [ | Z_{n+1} - Z_n | > C   ]   = 0, \as .
  \end{equation}
  We also assume that
     \begin{equation}
   \label{z4b}
   \limsup_{n \to \infty} Z_n = \infty, \as,
   \end{equation}
without which the question of whether $(Z_n)_{n \in \N}$
is recurrent or transient (in the sense
of Definition \ref{def1}) is trivial.
  Note that (\ref{z4b}) is implied by a suitable `irreducibility' assumption, such as,
  for all $y >0$,
 $\inf_{n \in \N} \Pr_n [ Z_m - Z_n > y \mbox{ for some }m>n  ] > 0$, a.s..
 In our case, as in the standard Lamperti problem,
 we will see a distinction between the `critical'
 case where $\beta=1$ and the `supercritical' case where $\beta \in [0,1)$.
 Thus we deal with these two cases separately in the remainder
 of this section.

  \subsection{The critical case: $\beta = 1$}

For $x>0$ and $n \in \N$ define
\begin{equation}
\label{rdef}
r(n;x) :=   n^{-1} x^2 + (\log (1+x) )^{-1}
  .
  \end{equation}
  For $p>0$ we write $\log^p x$ for $(\log x)^p$.
  We impose the further assumptions that
  there exist $\rho' \in \R$ and $s^2 \in (0,\infty)$ such that
  \begin{align}
  \mu_1 (n ; x) & =   \rho' x^{-1} - \frac{x}{n} + o ( x^{-1} r ( n; x) ) , \label{z2b} \\
  \mu_2 (n ; x) & = s^2 +o( r(n ;x) ) . \label{z3b}
  \end{align}

\begin{theo}
\label{thm1}
Suppose that the process $(Z_n)_{n \in \N}$ satisfies
 (\ref{z1b}), (\ref{z4b}), (\ref{z2b}) and (\ref{z3b}) for some
 $\rho' \in \R$ and $s^2 \in (0,\infty)$.
Then if $2\rho' \leq s^2$, $Z_n$ is recurrent.
\end{theo}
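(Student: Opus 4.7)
My plan is to apply a Foster--Lyapunov supermartingale argument with the Lyapunov function $f(x) = \log(2 + x)$, modelled on the treatment of the classical Lamperti problem \cite{lamp1,lamp3}. Combining (\ref{z1b}) with (\ref{z2b}) and (\ref{z3b}), a two-term Taylor expansion yields, for $n$ and $Z_n$ large,
\[
\Exp_n [f(Z_{n+1}) - f(Z_n)] = \frac{\mu_1(n;Z_n)}{2 + Z_n} - \frac{\mu_2(n;Z_n)}{2 (2+Z_n)^2} + O(Z_n^{-3}) = \frac{2\rho' - s^2}{2 Z_n^2} - \frac{1}{n} + R_n,
\]
where $R_n = o(1/n) + o(1/(Z_n^2 \log Z_n))$ collects the contributions of the $o(r(n;Z_n))$ terms. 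Under $2\rho' \leq s^2$ the first contribution is non-positive, while the time-dependent $-1/n$ term inherited from the $-x/n$ in (\ref{z2b}) provides further negative drift; the two drivers combined should force $Z_n$ into $\{Z \leq M\}$ eventually.

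Arguing by contradiction, suppose $\Pr[\liminf_n Z_n = \infty] > 0$. Then for suitable deterministic $M, N_0, L$ there exists an event $A$ with $\Pr(A) > 0$ on which $Z_n > M$ for all $n \geq N_0$ and $Z_{N_0} \leq L$. After upgrading the drift bound to $\Exp_n[f(Z_{n+1}) - f(Z_n)] \leq -1/(2n)$ on $A$ (for $n$ large enough), I would apply optional stopping at $T_M := \inf\{n \geq N_0 : Z_n \leq M\}$ to the non-negative process $f(Z_n)$:
\[
0 \leq \Exp\big[f(Z_{N \wedge T_M}) \1_A\big] \leq \log(2+L) \Pr(A) - \tfrac{1}{2} \sum_{k=N_0}^{N-1} \frac{\Pr(A \cap \{k < T_M\})}{k}.
\]
Letting $N \to \infty$, non-negativity of the left-hand side forces $\sum_{k \geq N_0} \Pr(A \cap \{k<T_M\})/k < \infty$. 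But on $A$, $T_M = \infty$, so the summand equals $\Pr(A)$ for every $k$, and $\Pr(A) \sum 1/k = \infty$ unless $\Pr(A) = 0$. This contradicts $\Pr(A) > 0$, giving $\liminf_n Z_n < \infty$ a.s.

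The principal obstacle is converting $R_n = o(1/n) + o(1/(Z_n^2 \log Z_n))$ into the required $R_n \leq 1/(2n)$ on $A$. The first summand is handled for $n$ large; the second is delicate, since on $\{Z_n \geq M\}$ it is controlled a priori only by a constant of order $1/(M^2 \log M)$, which is not of smaller order than $1/n$ for large $n$. This difficulty is sharpest in the borderline subcase $2\rho' = s^2$, when no additional buffer is available. I would address this by first establishing an a.s.\ upper envelope $Z_n \leq C n^{1/2}(\log n)^{(1/2)+\eps}$ via a parallel Lyapunov argument on $f(x) = x^2$ (using $\Exp_n[Z_{n+1}^2 - Z_n^2] \leq -2Z_n^2/n + O(1)$ obtained from (\ref{z2b}) and (\ref{z3b}) together with a maximal inequality and Borel--Cantelli), and then refining the analysis of $R_n$ under this two-sided control, possibly aided by a mildly modified Lyapunov function such as $\log(2+x) - 1/\log(2+x)$ which generates an additional favourable term. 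In the strict subcase $2\rho' < s^2$, the buffer $(2\rho'-s^2)/(2Z_n^2) < 0$ directly absorbs the $R_n$-contribution and the argument goes through straightforwardly without invoking the preliminary envelope.
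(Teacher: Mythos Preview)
Your approach works for the strict subcase $2\rho' < s^2$: with $f(x)=\log(2+x)$ the term $(2\rho'-s^2)/(2Z_n^2)<0$ absorbs the $o(Z_n^{-2}(\log Z_n)^{-1})$ part of $R_n$, and $-1/n$ absorbs the $o(1/n)$ part, so $(f(Z_n))$ is eventually a supermartingale outside a bounded set and recurrence follows (the paper closes this step via the Doob decomposition of $W_n$ rather than your optional-stopping-plus-contradiction argument, but that is a matter of packaging).

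The genuine gap is the borderline case $2\rho' = s^2$. Neither of your proposed fixes works. An a.s.\ upper envelope $Z_n \le C n^{1/2}(\log n)^{1/2+\eps}$ gives no upper bound on $Z_n^{-2}(\log Z_n)^{-1}$: on your event $A$ you only know $Z_n \ge M$, so the error is controlled merely by a constant $\delta\,M^{-2}(\log M)^{-1}$, which cannot be dominated by $1/(2n)$ for large $n$. The modification $g(x)=\log(2+x)-1/\log(2+x)$ does generate an extra negative term, but only of order $Z_n^{-2}(\log Z_n)^{-3}$, whereas the error you must kill is $o(Z_n^{-2}(\log Z_n)^{-1})$; at the intermediate order $Z_n^{-2}(\log Z_n)^{-2}$ the new contribution is proportional to $2\rho'-s^2$ and hence vanishes in the borderline case. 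Also note that your drift inequality ``$\Exp_n[f(Z_{n+1})-f(Z_n)]\le -1/(2n)$ on $A$'' is not meaningful as stated, since $A$ is not $\F_n$-measurable; you need the bound on the $\F_n$-event $\{Z_n>M\}$, and that is precisely what fails.

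The paper's resolution is to choose a different Lyapunov function, $W_n=\log\log Z_n$. The point is that $(\log\log x)''=-\tfrac{\log x+1}{x^2\log^2 x}$, so the second-order Taylor term produces, beyond $-\tfrac{s^2}{2Z_n^2\log Z_n}$, an additional strictly negative $-\tfrac{s^2}{2Z_n^2\log^2 Z_n}$. Because the first-order Taylor factor is $1/(Z_n\log Z_n)$, the error from (\ref{z2b}) becomes $o(n^{-1}(\log Z_n)^{-1})+o(Z_n^{-2}(\log Z_n)^{-2})$, and these two pieces are absorbed respectively by $-1/(n\log Z_n)$ and by the new $-s^2/(2Z_n^2\log^2 Z_n)$ term, uniformly for $n$ and $Z_n$ large. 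This is exactly the missing idea; once you have it, your supermartingale-plus-bounded-increment-martingale argument (or the paper's Doob decomposition version) goes through for the full range $2\rho'\le s^2$.
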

\begin{proof}
 Let $W_n:=\log \log Z_n$. Write $D_n := Z_{n+1} - Z_n$.
 First note that Taylor's formula implies that for $x, h$ with
 $x \to \infty$ and
 $h = o(x / \log x)$,
 \[ \log \log ( x+h) = \log \log x + \frac{h}{x \log x}
 - \frac{( \log x + 1) h^2}{2 x^2 \log^2 x } + O ( h^3 x^{-3} (\log x)^{-1} ) .\]
Setting $x = Z_n$ and $h= D_n$ and then taking expectations, we obtain
 \begin{align*}
 \Exp_n [ W_{n+1} - W_n   ]  
  = \frac{\mu_1 (n ; Z_n)}{Z_n \log Z_n} - \frac{ (\log Z_n +1)\mu_2 (n ; Z_n)}{2 Z_n^2 \log^2 Z_n } + O(Z_n^{-3}) ,
 \end{align*}
 using (\ref{z1b}) for the error term.
By (\ref{z2b}) and (\ref{z3b}) this last expression is
\[ \frac{2 \rho' - s^2}{2 Z_n^2 \log Z_n} - \frac{s^2}{2 Z_n^2 \log^2 Z_n} - \frac{1}{n \log Z_n}
  + o ( Z_n^{-2} (\log Z_n)^{-1} r (n ;Z_n )) < 0 , \]
 for all $n$ and $Z_n$ large enough, provided $2 \rho' - s^2 \leq 0$,
 by (\ref{rdef}). Thus there exist non-random constants $w_0 \in (0,\infty)$
 and $n_1 \in \N$ for which,  for all $n \geq n_1$, on $\{ W_n > w_0 \}$,
 \[ \Exp_n [ W_{n+1} - W_n ] < 0 , \as.\]

By Doob's decomposition, we may write $W_n = M_n + A_n$, $n \geq n_1$, where
$W_{n_1} = M_{n_1}$, $(M_n)_{n \geq n_1}$ is a martingale, and the previsible
sequence $(A_n)_{n \geq n_1}$ is defined by
\[ A_n = \sum_{m=n_1}^{n-1} \Exp_m [ W_{m+1} - W_m ]
\leq \sum_{m=n_1}^{n-1} \Exp_m [ W_{m+1} - W_m ] \1 \{ W_m \leq w_0 \}
\leq  C  \sum_{m=n_1}^{n-1}   \1 \{ W_m \leq w_0 \},\]
since the uniform jumps bound (\ref{z1b}) for $Z_n$ implies a uniform jumps bound
for $W_n$, $n \geq n_1$. Hence
$W_n \to \infty$ implies that  $\limsup_{n \to \infty}  A_n  < \infty$ so $M_n \to \infty$ also. However,
$(M_n)_{n \geq n_1}$ is a martingale with uniformly bounded increments (by (\ref{z1b}))
so $\Pr  [ M_n \to \infty] =0$ (see e.g.\ \cite[Theorem 5.3.1, p.\ 204]{durrett}).
Hence $\Pr [ \liminf_{n \to \infty} W_n < \infty ] = 1$. \end{proof}

\begin{theo}
\label{thm2}
Suppose that the process $(Z_n)_{n \in \N}$ satisfies
 (\ref{z1b}), (\ref{z4b}), (\ref{z2b}) and (\ref{z3b})
 for some $\rho' \in \R$ and $s^2 \in (0,\infty)$.
  Then if $2 \rho' > s^2$, $Z_n$ is
transient.
\end{theo}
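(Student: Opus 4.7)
The plan is to adapt the classical supercritical Lamperti strategy via the test function $f(x) = x^{-\gamma}$, modifying it with a deterministic additive correction to absorb the extra $-x/n$ term in the drift (\ref{z2b}). Since $2\rho' > s^2$, pick $\gamma \in (0, 2\rho'/s^2 - 1)$ and set $c_1 := \gamma[\rho' - \tfrac12(\gamma+1)s^2] > 0$. Taylor expanding $f$ at $Z_n$, with the third-order remainder controlled by (\ref{z1b}), and substituting (\ref{z2b}), (\ref{z3b}) gives
\[
\Exp_n\!\left[Z_{n+1}^{-\gamma} - Z_n^{-\gamma}\right] = -c_1 Z_n^{-\gamma-2} + \gamma Z_n^{-\gamma}/n + o\!\left(Z_n^{-\gamma-2}\right) + o\!\left(Z_n^{-\gamma}/n\right).
\]
The first term is the classical negative Lamperti contribution; the second, positive term stems from the $-x/n$ piece of $\mu_1(n;x)$, and it would block a naive supermartingale inequality.

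The saving observation is that, on $(0,\infty)$, the function $z \mapsto \gamma z^{-\gamma}/n - c_1 z^{-\gamma-2}$ attains its supremum at $z^* \sim \sqrt{n}$ with value $C_1 n^{-1-\gamma/2}$ for an explicit $C_1 > 0$; this is summable in $n$ precisely because $\gamma > 0$. Define $h(n) := C_* \sum_{k \geq n} k^{-1-\gamma/2}$ with $C_*$ slightly larger than $C_1$ (to accommodate the $o$-terms). Then $h$ is positive, bounded, and decreases to $0$, and $h(n) - h(n+1) \geq \gamma z^{-\gamma}/n - c_1 z^{-\gamma-2} + \text{errors}$ uniformly in $z > 0$ for $n$ large. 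Hence $F_n := Z_n^{-\gamma} + h(n)$ satisfies $\Exp_n[F_{n+1} - F_n] \leq 0$ on $\{Z_n \geq R\}$ for all $n \geq n_0$, for some $R$ and $n_0$ absorbing the error terms. Setting $T := \inf\{n \geq n_0 : Z_n \leq R\}$, the process $F_{n \wedge T}$ is a non-negative supermartingale, so Fatou's lemma combined with optional stopping yields
\[
R^{-\gamma}\,P[T < \infty \mid \F_{n_0}] \leq \Exp[F_T \1_{\{T<\infty\}} \mid \F_{n_0}] \leq F_{n_0} = Z_{n_0}^{-\gamma} + h(n_0),
\]
that is, $P[T < \infty \mid \F_{n_0}] \leq R^\gamma[Z_{n_0}^{-\gamma} + h(n_0)]$, which is strictly less than $1$ once $Z_{n_0}$ and $n_0$ are sufficiently large.

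To conclude, suppose for contradiction that $P[\liminf_n Z_n \leq R] > 0$ for some fixed $R$. By (\ref{z4b}), on that event $Z_n$ executes infinitely many excursions between $\{Z \leq R\}$ and levels above some large $M > R$, so the stopping times $\sigma_k := \inf\{n > \tau_{k-1} : Z_n \geq M\}$ and $\tau_k := \inf\{n > \sigma_k : Z_n \leq R\}$ (with $\tau_0 := n_0$) are all finite. Applied with starting time $\sigma_k$, the preceding bound gives $P[\tau_k < \infty \mid \F_{\sigma_k}] \leq R^\gamma[M^{-\gamma} + h(\sigma_k)]$; since $\sigma_k \geq k + n_0$ on that event, $h(\sigma_k) \to 0$, so for $M$ large and $k$ large this conditional probability is bounded by some $\alpha < 1$. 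Iterating via the chain rule of conditional probabilities gives $P[\tau_k < \infty \text{ for all } k \leq K] \leq \alpha^{K - K_0}$ for $K$ large, which tends to $0$; hence $P[\tau_k < \infty \text{ for all } k] = 0$, contradicting the assumption. Letting $R$ range over an increasing sequence yields $\liminf_n Z_n = \infty$ a.s., i.e., $Z_n \to \infty$ a.s.

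The principal novelty versus the classical Lamperti argument, and the main obstacle, is the construction of the bounded compensator $h(n)$: it must dominate the positive $\gamma Z_n^{-\gamma}/n$ drift uniformly in $Z_n$, yet remain bounded (with a vanishing tail) so the optional-stopping inequality stays informative as $Z_{n_0}$ and $n_0$ grow. This balancing act is made possible by the uniform estimate $\sup_{z>0}[\gamma z^{-\gamma}/n - c_1 z^{-\gamma-2}] = O(n^{-1-\gamma/2})$, which is summable precisely because of the $\gamma > 0$ we can afford to choose thanks to the hypothesis $2\rho' > s^2$.
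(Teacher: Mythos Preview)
Your proof is correct and takes a genuinely different route from the paper's. Both arguments build a nonnegative supermartingale by adding a deterministic time-dependent correction to a classical Lyapunov function, but the choices differ: the paper uses $W_n = (\log Z_n)^{-1} + 9(\log n)^{-1}$, whose $9(\log n)^{-1}$ piece absorbs the troublesome $+(n\log^2 Z_n)^{-1}$ term coming from the $-x/n$ drift, at the cost of a two-case analysis according to whether $n \lessgtr 4 Z_n^2/\eps$; you instead keep the power function $Z_n^{-\gamma}$ and neutralise the positive $\gamma Z_n^{-\gamma}/n$ term by the summable compensator $h(n)=C_*\sum_{k\geq n}k^{-1-\gamma/2}$, obtained from the clean one-line optimisation $\sup_{z>0}\bigl[\gamma z^{-\gamma}/n - c_1 z^{-\gamma-2}\bigr]=O(n^{-1-\gamma/2})$. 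Your construction is closer in spirit to the classical supercritical Lamperti argument and avoids the case split; the paper's logarithmic choice, on the other hand, lets a \emph{single} stopping-time estimate (with $P[\tau_K<\infty]\leq 3/(4K)$) do the job, whereas you iterate excursions and use that $h(\sigma_k)\to 0$.

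Two minor points worth tidying. First, the phrase ``$h(n)-h(n+1)\geq \gamma z^{-\gamma}/n - c_1 z^{-\gamma-2}+\text{errors}$ uniformly in $z>0$'' is literally too strong, since the $o(z^{-\gamma-2})$ error is only controlled for $z\geq R$; but you only use the inequality on $\{Z_n\geq R\}$, so this is cosmetic. Second, ``$\liminf_n Z_n\leq R$'' does not quite give $Z_n\leq R$ infinitely often; use $\tau_k:=\inf\{n>\sigma_k: Z_n\leq R+1\}$ (or strict inequality) to ensure the excursion times are finite on the event in question. You could also avoid the iteration altogether by mimicking the paper's device: require $\sigma_M\geq M^{2/\gamma}$ in the definition of the entry time, so that $h(\sigma_M)\leq h(M^{2/\gamma})=O(M^{-1})$ and a single estimate $P[\tau_M<\infty]\leq R^\gamma\bigl[M^{-\gamma}+O(M^{-1})\bigr]\to 0$ suffices.
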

\begin{proof}
This time set
$$
W_n: =\frac {1}{ \log Z_n}+\frac {9}{\log n}.
$$
Again write $D_n := Z_{n+1} - Z_n$.
We want to compute
\begin{align}
\label{lem8a}
 \Exp_n [ W_{n+1} - W_n  ] = \Exp_n \left[ (\log (Z_n + D_n))^{-1} - (\log Z_n)^{-1}  \right] \nonumber\\
+ 9 \left[ (\log (n+1))^{-1} - (\log n)^{-1} \right] .\end{align}
Observe that, for the final term on the right-hand side of (\ref{lem8a}),
\begin{equation}
\label{lem8b}
 (\log (n+1))^{-1} - (\log n)^{-1} = \frac{ \log ( 1 - (n+1)^{-1} )}{ \log n \log (n+1)}
= - \frac{1}{n \log^2 n } + O(n^{-2}).\end{equation}
Also for the expectation on the right-hand side of (\ref{lem8a}) we have that
\begin{align*}
\Exp_n \left[ (\log (Z_n + D_n))^{-1} - (\log Z_n)^{-1}   \right]  \\
= (\log Z_n)^{-1} \Exp_n \left[ \left( 1 + \frac{ \log ( 1 + (D_n/Z_n))}{\log Z_n} \right)^{-1} -1   \right] .
\end{align*}
Taylor's formula implies that for $a = O(1)$ and $y = o(1)$,
\[ \left( 1 + a \log (1+y ) \right)^{-1} = 1 - ay + \frac{2a^2+a}{2} y^2 + O(y^3) .\]
Applying this formula with $a = 1/\log Z_n$ and $y = D_n/Z_n$ we obtain,
\begin{align*}
 & ~~~ \Exp_n \left[ (\log (Z_n + D_n))^{-1} - (\log Z_n)^{-1}  \right] \\
 & = - \frac{\mu_1 (n ; Z_n)}{Z_n \log^2 Z_n} + \frac{\mu_2 (n ; Z_n)}
 {2 Z_n^2 \log^2 Z_n } + \frac{\mu_2 (n ; Z_n)}
 { Z_n^2 \log^3 Z_n } + O(Z_n^{-3} ) , \end{align*}
 by (\ref{z1b}). Now using (\ref{z2b}) and (\ref{z3b}) we obtain,
 \begin{align}
\label{lem8c}
 & ~~~ \Exp_n \left[ (\log (Z_n + D_n))^{-1} - (\log Z_n)^{-1}   \right] \nonumber\\
& =  \frac{1}{2Z_n^2 \log^2 Z_n}
\left( - (2 \rho' - s^2) + o(r(n;Z_n)) + O ( ( \log Z_n)^{-1} ) \right) + \frac{1}{n \log^2 Z_n} .\end{align}
Suppose that $2 \rho' - s^2 \geq 2\eps > 0$.
Then by (\ref{rdef}), (\ref{lem8a}), (\ref{lem8b}) and (\ref{lem8c})
we have that there exist non-random constants $n_0 \in \N$ and $x_0 \in (1,\infty)$ 
such that for all $n \geq n_0$,
 on $\{ Z_n \geq x_0\}$, a.s.,
\begin{equation}
\label{lem8d}
  \Exp_n [ W_{n+1} - W_n  ]
\leq
-\frac{\eps}{2 Z_n^2 \log^2 Z_n} - \frac{8}{n \log^2 n} + \frac{3}{2n \log^2 Z_n} .\end{equation}
We have that the right-hand side of (\ref{lem8d})
is bounded above
by
\[ \frac{1}{\log^2 Z_n} \left( \frac{-\eps}{2 Z_n^2} + \frac{3}{2n} \right)
\leq \frac{1}{\log^2 Z_n} \left( \frac{-\eps}{2 Z_n^2} + \frac{3\eps}{8Z_n^2} \right) ,\]
provided $n \geq 4 Z_n^2 \eps^{-1}$, and this last upper bound is negative for $Z_n \geq x_0$.
On the other hand,
if $n \leq 4Z_n^2 \eps^{-1}$
 the right-hand side of (\ref{lem8d}) is bounded above
by
\begin{align*}
\frac{1}{n} \left( \frac{3/2}{\log^2 Z_n} - \frac{8}{\log^2 n} \right)
\leq \frac{1}{n} \left( \frac{7}{\log^2 n} - \frac{8}{\log^2 n} \right) < 0 ,\end{align*}
for $Z_n \geq x_0$ and $n \geq n_0$.
Thus in either case we have concluded that for all $n \geq n_0$,
on $\{ Z_n \geq x_0 \}$,
\begin{equation}
\label{superm}
 \Exp_n [ W_{n+1} - W_n  ]
< 0, \as.
\end{equation}

Now fix $K>1$ and $x_1 \geq x_0$. Define the stopping times
\[
\sigma_K := \min \{ n \geq \max \{ n_0, x_1^{18K} \} : Z_n \geq x_1^{4K} \} ;
~~~ \tau_K := \min \{ n \geq \sigma_K : Z_n \leq x_1 \} .
\]
By   (\ref{z4b}) we have that $\Pr [ \sigma_K < \infty ] = 1$.
From (\ref{superm}) and the definition of $\tau_K$ we have that
$(W_{n \wedge \tau_K})_{n \geq \sigma_K}$
is a non-negative supermartingale, and hence it converges almost surely
to a $[0,\infty)$-valued random variable $W := W^{(K)}$. In particular, since $\sigma_K<\infty$ a.s.,
we have
$\lim_{n \to \infty} W_{n \wedge \tau_K } = W$, a.s.. Moreover
\begin{equation}
\label{0401a}
 \Exp [ W] \geq \Exp [ W \1_{\{ \tau_K < \infty \}} ]
= \Exp [ W_{\tau_K}  \1_{\{ \tau_K < \infty \}} ]
\geq \frac{ \Pr [ \tau_K < \infty ] }{ \log x_1 } ,
\end{equation}
since $Z_{\tau_K} \leq x_1$. On the other hand, since $(W_{n \wedge \tau_K})_{n \geq \sigma_K}$
is a   supermartingale,
\begin{equation}
\label{0401b}
 \Exp [ W ] \leq   \Exp [ W_{\sigma_K } ] \leq \frac{1}{4K \log x_1} + \frac{9}{18K \log x_1 }
= \frac{3}{4K \log x_1 },
\end{equation}
using the facts that $Z_{\sigma_K} \geq x_1^{4K}$ and $\sigma_K \geq x_1^{18K}$.
Combining (\ref{0401a}) and (\ref{0401b}) we see that
\[ \frac{ \Pr [ \tau_K < \infty ] }{ \log x_1 }  \leq \frac{3}{4K \log x_1 } .\]
On $\{ \sigma_K < \infty \} \cap \{ \tau_K = \infty \}$, we have that $\liminf_{n \to \infty} Z_n \geq x_1$, so the preceding argument shows
that $\Pr [ \liminf_{n \to \infty} Z_n \geq x_1 ] \geq 1 - \frac{3}{4K}$ for any $K$ and any $x_1 \geq x_0$.
It follows that $\Pr [ Z_n \to \infty ] =1$. \end{proof}

  \subsection{The supercritical case: $\beta \in [0,1)$}

  Once again we will assume that (\ref{z1b}) and (\ref{z4b}) hold. We will also
  assume that there exist $\beta \in [0,1)$ and $\rho \in \R \setminus \{ 0\}$
   such that
  \begin{align}
   \mu_1 (n ; x) & =   \rho x^{-\beta} - \frac{x}{n} + o(x^{-\beta}  ) + o(x n^{-1}) . \label{z2}
   \end{align}

  \begin{theo}
  \label{superc}
  Consider the process $(Z_n)_{n \in \N}$
  satisfying (\ref{z1b}), (\ref{z4b}), and (\ref{z2}),
  where
  $\beta \in [0,1)$. Then $Z_n$ is transient if $\rho>0$ and recurrent
  if $\rho <0$.
  \end{theo}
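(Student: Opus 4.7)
For the recurrence case $\rho < 0$, I would use a direct Lyapunov function argument with $V(x) = x^{1+\beta}$. A second-order Taylor expansion together with the jump bound (\ref{z1b}) gives
\[
\Exp_n [V(Z_{n+1}) - V(Z_n)] = (1+\beta) Z_n^\beta \mu_1 (n ; Z_n) + O(Z_n^{\beta - 1}),
\]
a.s., using $\mu_2 (n ; x) = O(1)$ from (\ref{z1b}) for the quadratic piece. Substituting (\ref{z2}), the dominant contribution becomes $(1+\beta)\rho - (1+\beta) Z_n^{1+\beta}/n$ modulo $o(1) + o(Z_n^{1+\beta}/n)$ corrections. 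With $\rho < 0$ and $\beta \in [0,1)$ the constant piece $(1+\beta)\rho$ is strictly negative and $-(1+\beta) Z_n^{1+\beta}/n \leq 0$, so there exist $M, c > 0$ and $n_1 \in \N$ such that $\Exp_n [V(Z_{n+1}) - V(Z_n)] \leq -c$, a.s.\ on $\{Z_n \geq M\} \cap \{n \geq n_1\}$. A standard optional-stopping/telescoping argument, using $V \geq 0$, then shows that the hitting time $\tau_M := \inf\{n \geq n_1 : Z_n \leq M\}$ has finite expectation, hence is a.s.\ finite; iterating this together with (\ref{z4b}) produces $\liminf_{n \to \infty} Z_n < \infty$ a.s., which is recurrence.

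For the transience case $\rho > 0$, my plan is to appeal to the stochastic approximation Lemma \ref{lower-lm} developed earlier in this section, which is designed to yield a quantitative lower bound of order $n^{1/(1+\beta)}$ on $Z_n$, tracking the attracting trajectory $z = \ell(\rho,\beta) n^{1/(1+\beta)}$ suggested by the ODE $\dot z = \rho z^{-\beta} - z/t$. Such a lower bound immediately implies $Z_n \to \infty$ a.s. (Combined with the complementary upper bound inherited from \cite{superlamp}, one in fact obtains the full law of large numbers $n^{-1/(1+\beta)} Z_n \to \ell(\rho,\beta)$ a.s., which is then used in the subsequent proofs for the self-interacting walk; for the qualitative transience statement, only the lower bound is needed.)

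The genuine obstacle lies in the transience direction. A naive choice such as $f(x) = x^{-a}$ cannot be a global supermartingale here: the $-x/n$ piece of $\mu_1$ contributes a positive $+ a x^{-a}/n$ to $\Exp_n [f(Z_{n+1}) - f(Z_n)]$ that balances the favourable $-a \rho x^{-a-1-\beta}$ precisely on the equilibrium curve $x = (\rho n)^{1/(1+\beta)}$. Consequently, the classical one-step Lyapunov method used in \cite{lamp1,lamp3} for the standard supercritical Lamperti problem does not carry over; one is forced to track $Z_n$ close to its moving equilibrium, which is exactly what the stochastic approximation approach does. The recurrence side, by contrast, is comparatively elementary because both terms in $\mu_1$ have the same (negative) sign when $\rho < 0$, so a single power Lyapunov function suffices.
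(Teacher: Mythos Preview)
Your recurrence argument for $\rho<0$ is correct and is in fact more direct than the paper's: the paper bounds $\mu_1(n;x)\le (\rho'+o(1))x^{-1}-x/n$ for any $\rho'<0$ and then reuses the proof of Theorem~\ref{thm1} (the $\beta=1$ recurrence result, with Lyapunov function $\log\log Z_n$). Your choice $V(x)=x^{1+\beta}$ exploits directly that both pieces of $\mu_1$ push the same way when $\rho<0$, and the optional-stopping / iteration step you describe is standard and valid.

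The transience argument, however, has a genuine circularity. In this paper, Lemma~\ref{lower-lm} is applied (in the proof of Theorem~\ref{lln1}) with $V_n=(n-1)/Z_n^{1+\beta}$ and exceptional events $A_n=\{Z_n<A\}$; the hypothesis $\Pr[A_n\text{ i.o.}]=0$ is verified \emph{by invoking Theorem~\ref{superc}} (transience). The drift estimate (\ref{eq:edy}) that feeds condition~(b) of Lemma~\ref{lower-lm} is only valid once $Z_n$ is large, because the $o(\cdot)$ terms in (\ref{z2}) are asymptotic in $x$; on $\{V_n>r+\delta\}$ there is no lower bound on $Z_n$, so you cannot take $A_n=\emptyset$. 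Thus the stochastic-approximation route cannot establish transience on its own; it needs transience as input and then upgrades it to the law of large numbers.

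Your diagnosis of the obstruction is half right. It is true that a purely spatial Lyapunov function such as $f(x)=x^{-a}$ fails, for exactly the reason you give. What you are missing is that the paper already built, for the critical case $\beta=1$, a \emph{time-dependent} Lyapunov function $W_n=(\log Z_n)^{-1}+9(\log n)^{-1}$ (proof of Theorem~\ref{thm2}) whose supermartingale estimate survives the $-x/n$ term. The paper's proof of Theorem~\ref{superc} is then a two-line reduction: since $\beta<1$, for any fixed $\rho'>0$ one has $\rho x^{-\beta}\ge(\rho'+o(1))x^{-1}$ for large $x$, so $\mu_1(n;x)\ge(\rho'+o(1))x^{-1}-x/n+o(x^{-1}r(n;x))$; choosing $2\rho'>C^2\ge\mu_2$ and rerunning the proof of Theorem~\ref{thm2} with this inequality in place of (\ref{z2b}) gives transience. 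So a one-step Lyapunov argument \emph{does} work, provided the Lyapunov function is allowed to depend on time.
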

  \begin{proof}
  First suppose that $\rho>0$. By (\ref{z1b}) we can choose $\rho' \in (0,\infty)$
  so that $2 \rho' > C^2 > \Exp_n [ (Z_{n+1} -Z_n)^2 ]$, a.s., and, by (\ref{z2}),
  \[
  \Exp_n [ Z_{n+1} - Z_n  ]   \geq   ( \rho' +o(1)) Z_n^{-1} - \frac{Z_n}{n} + o ( Z_n^{-1} r (n ;Z_n) ) , \as.
  \]
   It is this inequality, rather than the equality (\ref{z2b}),
  that is needed in the proof of Theorem \ref{thm2}. Hence following that proof implies transience.
  Similarly, if $\rho <0$ we have, for any $\rho' \in (-\infty,0)$, a.s.,
   \[
   \Exp_n [ Z_{n+1} - Z_n   ]   \leq    (\rho'+o(1)) Z_n^{-1} - \frac{Z_n}{n} + o ( Z_n^{-1} r ( n ;Z_n) ) .
   \]
     Using this inequality
   in the proof of Theorem \ref{thm1} implies recurrence. \end{proof}

The rest of this section works towards a proof
 of the following law of large numbers.

       \begin{theo}
   \label{lln1}
  Consider the process $(Z_n)_{n \in \N}$
  satisfying (\ref{z1b}), (\ref{z4b}), and (\ref{z2}),
  where
  $\beta \in [0,1)$ and $\rho>0$. Then, with $\ell (\rho,\beta)$ as defined at (\ref{elldef}),
  as $n \to \infty$,
   \begin{equation}  \label{zlln}
    \frac{Z_n}{n^{1/(1+\beta)}} \toas \ell (\rho, \beta) .
   \end{equation}
  \end{theo}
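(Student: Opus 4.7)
The guiding principle is that $\ell(\rho,\beta)$ is exactly the attractor of the ODE $\dot z = \rho z^{-\beta} - z/t$ associated with the drift equation (\ref{z2}): substituting $z(t) = c t^{1/(1+\beta)}$ and equating coefficients yields $c^{1+\beta} = L := \rho(1+\beta)/(2+\beta) = \ell(\rho,\beta)^{1+\beta}$. This motivates rescaling to $V_n := Z_n^{1+\beta}/n$, for which (\ref{zlln}) is equivalent to $V_n \to L$ almost surely.

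A Taylor expansion of $x \mapsto x^{1+\beta}$ at $Z_n$ applied to $D_n := Z_{n+1} - Z_n$, together with the jump bound (\ref{z1b}) and the drift (\ref{z2}), gives (on the event $Z_n$ is large)
\begin{equation*}
\Exp_n[Z_{n+1}^{1+\beta} - Z_n^{1+\beta}] = (1+\beta)\rho - (1+\beta) \frac{Z_n^{1+\beta}}{n} + \text{lower order}.
\end{equation*}
Rewriting $V_{n+1} - V_n = [n(U_{n+1}-U_n) - U_n]/(n(n+1))$ with $U_n := Z_n^{1+\beta}$ and taking conditional expectation then produces the Robbins--Monro-type recursion
\begin{equation*}
\Exp_n[V_{n+1} - V_n] = \frac{2+\beta}{n+1}\,(L - V_n) + \text{lower order},
\end{equation*}
with step size $\asymp 1/n$ and unique attractor $L$. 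The jumps of $V_n$ are of size $O(n^{-1/(1+\beta)})$ by (\ref{z1b}), and are square-summable precisely because $\beta < 1$.

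For the upper bound $\limsup V_n \leq L$ a.s., I would first note that the drift in (\ref{z2}) is dominated by the standard supercritical Lamperti drift $\rho x^{-\beta}$ (dropping the non-positive term $-x/n$), so the results of \cite{superlamp} give $Z_n = O(n^{1/(1+\beta)})$ a.s., guaranteeing a priori boundedness of $V_n$. To sharpen to the correct constant, observe that on $\{V_n \geq L + \eps\}$ the displayed drift is $\leq -c\eps/n$, while the jumps are $o(1)$. A standard exit-time supermartingale argument, using divergence of $\sum 1/n$, shows that $V_n$ cannot stay above $L + \eps$ indefinitely; combining over rational $\eps > 0$ yields $\limsup V_n \leq L$.

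The lower bound is the main obstacle: transience (Theorem~\ref{superc}) only gives $Z_n \to \infty$ with no quantitative rate, and the naive submartingale argument is too weak. This is where the new tool enters: I would apply the forthcoming stochastic approximation Lemma~\ref{lower-lm} to $V_n$, verifying its hypotheses --- the one-sided drift inequality $\Exp_n[V_{n+1} - V_n] \geq c\eps/(n+1)$ on $\{V_n \leq L - \eps\}$ from Step~1, the jump bound of order $n^{-1/(1+\beta)}$ from (\ref{z1b}), and the a priori upper bound from the previous step. The lemma delivers $\liminf V_n \geq L$ almost surely. Combining with $\limsup V_n \leq L$ gives $V_n \to L$ a.s., equivalently $n^{-1/(1+\beta)} Z_n \to \ell(\rho,\beta)$ a.s., proving (\ref{zlln}).
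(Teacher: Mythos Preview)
Your strategy is close to the paper's --- both route through Lemma~\ref{lower-lm} together with the upper bound from \cite{superlamp} --- but there is one structural difference and one gap in how you invoke the lemma.

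The paper does not work with $V_n = Z_n^{1+\beta}/n$ but with the \emph{reciprocal} $V_n = (n-1)/Z_n^{1+\beta}$, whose attractor is $r = (2+\beta)/(\rho(1+\beta)) = 1/L$. It then verifies all of conditions (a)--(c) of Lemma~\ref{lower-lm} in one pass (there is no separate treatment of upper and lower bounds), obtaining $V_n \to V_\infty \in \{0,r\}$ a.s. The role of \cite[Theorem~2.3]{superlamp} is only at the very end: it gives $\limsup_n n^{-1/(1+\beta)} Z_n < \infty$ a.s., which in the reciprocal variable is exactly the statement that $V_\infty \neq 0$.

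The gap in your version is that Lemma~\ref{lower-lm} does \emph{not} deliver ``$\liminf V_n \geq L$'' as you assert: under (a)--(c) it only yields $V_\infty \in \{0, r\}$, and excluding the degenerate limit $0$ is left to the user. With your choice $V_n = Z_n^{1+\beta}/n$, the limit $V_\infty = 0$ corresponds to $Z_n = o(n^{1/(1+\beta)})$ --- precisely the hard lower bound you are trying to prove --- and \cite{superlamp} gives no information here. Your route is in fact salvageable: on $\{V_n \to 0\}$ one has $Z_n \to \infty$ by Theorem~\ref{superc} and eventually $V_n < L/2$, so the drift in your Robbins--Monro display stays bounded below by a constant times $1/n$ uniformly as $V_n \downarrow 0$; the martingale-plus-diverging-compensator argument from the proof of part~(c) of Lemma~\ref{lower-lm} then yields a contradiction. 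But this step is genuinely additional and is not what the lemma says. The paper's reciprocal variable sidesteps the issue entirely, because there $V_\infty = 0$ corresponds to the easy (upper-bound) direction already handled by \cite{superlamp}.
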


 The proof uses the following lemma, which is of
 some independent interest, and falls
 loosely into a family of
 ``stochastic approximation'' results; see e.g.~\cite[Section 2.4]{pemantle}.

  \begin{lm}
    \label{lower-lm}
   Suppose that $(V_n)_{n \in \N}$ is a non-negative process adapted to the
   filtration $(\F_n)_{n \in \N}$. Suppose that there exists $r>0$ such that the following hold.
   \begin{itemize}
   \item[(a)]
   There exists  a non-negative sequence
  $(\gamma_n)_{n\in\N}$ adapted to $(\F_n)_{n \in \N}$ with $\sum_{n \in \N} \gamma_n < \infty$ a.s.\
  such that
    for any $b>0$ and all
    $n \in \N$   we have that, a.s.,
    \begin{align*}
    \Exp_n \left[(V_{n+1}-V_n)^2  \right] \le C(b) \gamma_n ~\textrm{on}~
    {\{ V_n\le b \}},
    \end{align*}
   where $C(b)$ is a constant depending only on $b$.
   \item[(b)]
   There exists $\eps>0$, and for any $\delta\in(0,r)$ there is a sequence of events $A_n=A_n(\delta)$, $n \in \N$, such that $A_n \in  \F_n$, 
    $\Pr[ A_n\text{ i.o.}]=0$, and   a.s.\ for all $n \in \N$,
    \begin{align*}
    \Exp_n [V_{n+1}  ]  \le V_n ~\text{on}~    \{ V_n > r + \delta \}\cap A_n^{\rm c},
    ~\text{ and }~ \Exp_n [V_{n+1} ] & \ge V_n ~\textrm{on}~  \{V_n<r-\delta \}\cap A_n^{\rm c},
    \end{align*}
    and also
  $A_n^{\rm c} \subseteq \{ V_{n+1} > (1-\eps) V_n \}$.
    \end{itemize}
Then a.s.\ $\lim_{n\to\infty} V_n = V_\infty$ exists in $[0,\infty)$.  If,
additionally,
   \begin{itemize}
   \item[(c)]
   there exists a non-negative   sequence $({\tilde\gamma_n})_{n\in\N}$ adapted to
$(\F_n)_{n \in \N}$ with $\sum_{n\in\N} \tilde \gamma_n = + \infty$ a.s.\
   such that
    for any $a, b$ with $0<a<b$ and $r\notin [a,b]$, for all $n$ large
enough,
    on {$\{ V_n \in [a,b] \}$}, a.s.,
    \begin{align*}
    \Exp_n  [V_{n+1}-V_n   ] & \le  -  \tilde C(a,b)\tilde\gamma_n
\text{ if } r<a,\\
    \Exp_n  [V_{n+1}-V_n   ] & \ge     \tilde C(a,b)\tilde\gamma_n
\text{ if } r>b,
   \end{align*}
   where $\tilde C(a,b)>0$ is a constant depending only on $a$ and $b$,
    \end{itemize}
then $V_\infty \in\{0,r\}$.
   \end{lm}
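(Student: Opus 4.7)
The plan is in two stages: first, show that $V_n$ converges almost surely to a finite limit $V_\infty$ using only (a) and (b); then, under (c), identify $V_\infty \in \{0,r\}$.

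For convergence, the strategy hinges on a Lyapunov function tailored to vanish on a band around $r$. Since $\Pr[A_n\text{ i.o.}] = 0$, there is a random $N$ past which $A_n^{\rm c}$ holds for all $n \ge N$, and one works on this full-measure event. I first establish a.s.\ boundedness $\sup_n V_n < \infty$: on $\{V_n > r+\delta\}\cap A_n^{\rm c}$ the process has non-positive conditional drift while (a) gives summable conditional second moments of increments on bounded sets, so a stopping-time excursion argument coupled with a bootstrap in the ceiling level yields finiteness of $\sup_n V_n$; the multiplicative lower bound $V_{n+1}>(1-\eps)V_n$ in (b) is used to prevent pathological downward jumps from compromising this bootstrap. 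Once $V_n \le b$ for all $n$ for some random $b$, summability of $\gamma_n$ and the conditional Borel--Cantelli lemma applied to $\Pr_n[|V_{n+1}-V_n|>\eta] \le C(b)\gamma_n/\eta^2$ give $|V_{n+1}-V_n|\to 0$ a.s. For each fixed $\delta>0$, consider the Lyapunov function $L_\delta(x):=(|x-r|-\delta)_+^2$, which vanishes on $[r-\delta,r+\delta]$. A direct case check shows that on $A_n^{\rm c}\cap\{V_n\le b\}$ one has $\Exp_n[L_\delta(V_{n+1})]\le L_\delta(V_n)+C(b)\gamma_n$, the key observation being that across the band boundary $L_\delta(V_{n+1}) \le (V_{n+1}-V_n)^2$, so the error is the summable $\gamma_n$ rather than the unsummable $\gamma_n^{1/2}$ that $(x-r)^2$ would produce. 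Robbins--Siegmund then yields convergence of $L_\delta(V_n)$ for every $\delta$; combined with $|V_{n+1}-V_n|\to 0$ (which rules out $V_n$ having distinct limit points separated by more than the vanishing jump size) and letting $\delta\downarrow 0$ along a countable sequence, $V_n\to V_\infty$ a.s.\ for some $V_\infty\in[0,\infty)$.

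For the identification under (c), argue by contradiction: suppose $V_\infty = c\notin\{0,r\}$. Choose $[a,b]$ with $0<a<c<b$ and $r\notin[a,b]$. Eventually $V_n\in[a,b]$ a.s., so by (c), for $n$ large, $|\Exp_n[V_{n+1}-V_n]|\ge\tilde C(a,b)\tilde\gamma_n$ with sign determined by whether $c<r$ or $c>r$. The martingale $M_n := \sum_{k<n}(V_{k+1}-V_k-\Exp_k[V_{k+1}-V_k])$ has summable conditional variance (by (a) and boundedness), hence converges a.s. Writing $V_n-V_N = (M_n-M_N)+\sum_{k=N}^{n-1}\Exp_k[V_{k+1}-V_k]$, the compensator sum diverges in absolute value since $\sum\tilde\gamma_n=\infty$, while the left-hand side converges to the finite $V_\infty - V_N$: contradiction. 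Therefore $V_\infty\in\{0,r\}$.

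The main obstacle is the convergence step, specifically the interplay between (i) the absence of drift information in the band $[r-\delta, r+\delta]$ from (b), (ii) the level-dependent constant $C(b)$ in (a), which makes a.s.\ boundedness of $V_n$ a prerequisite for a clean Lyapunov argument, and (iii) the uncontrolled events $A_n$, which must be absorbed via $\Pr[A_n\text{ i.o.}]=0$. The tailored Lyapunov function $(|x-r|-\delta)_+^2$ is chosen precisely so that across the band boundary its increment is dominated by $(V_{n+1}-V_n)^2$, yielding the summable error $\gamma_n$ rather than $\gamma_n^{1/2}$; conversely, bootstrapping a.s.\ boundedness requires using $V_{n+1}>(1-\eps)V_n$ to constrain excursions into the supermartingale regime, where standard maximal inequalities can then be applied against the summable variance budget.
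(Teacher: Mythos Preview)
Your identification step under (c) matches the paper's argument essentially verbatim: compensator versus bounded-variance martingale, leading to a contradiction.

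The convergence step is where your proposal diverges from the paper, and where there is a genuine gap. Your Lyapunov scheme with $L_\delta(x)=(|x-r|-\delta)_+^2$ is sound \emph{provided} you already know $\sup_n V_n<\infty$ a.s.: the case analysis you sketch does give $\Exp_n[L_\delta(V_{n+1})]\le L_\delta(V_n)+C(b)\gamma_n$ on $A_n^{\rm c}\cap\{V_n\le b\}$, and Robbins--Siegmund plus $|V_{n+1}-V_n|\to 0$ then yields convergence of $V_n$. The problem is that the boundedness itself is asserted, not proved. Your description (``stopping-time excursion argument coupled with a bootstrap in the ceiling level'') does not supply a mechanism, and the obstacle is real: condition (a) controls $\Exp_n[(V_{n+1}-V_n)^2]$ only on $\{V_n\le b\}$, with a constant $C(b)$ about which nothing is assumed, so once $V_n$ is large you have no second-moment bound at all. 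In particular, the natural candidates (e.g.\ $(V_n-r-\delta)_+$ or $(V_n-r-\delta)_+^2$ as near-supermartingales) all pick up an uncontrolled $\Exp_n[(V_{n+1}-V_n)^2]$ term precisely when $V_n$ is large, and the overshoot terms from below only give $\gamma_n^{1/2}$, which need not be summable. The multiplicative lower bound $V_{n+1}>(1-\eps)V_n$ constrains \emph{downward} jumps and does nothing to cap upward growth; in the paper it is used in the $\liminf$ half of the argument, not for boundedness.

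The paper avoids the circularity entirely by never needing global boundedness. Its excursion argument tracks a martingale $M_n=\sum_s[(V_{s+1}-V_s)I_s-D_s]$ whose increments are nonzero only when $V_s$ lies in a \emph{fixed bounded} interval $(y-\delta,y+\delta)$; then (a) applies with a single constant and the increasing process is summable, so $M_n$ converges and one can rule out repeated crossings of that band. Combined with two easy super/submartingale observations (handling $\liminf V_n>r$ and $\limsup V_n<r$ separately), this yields convergence directly. If you want to rescue your Lyapunov route, you would need either an independent proof of a.s.\ boundedness under (a)--(b) alone---which, as far as I can see, requires something of the same strength as the paper's excursion argument---or to localise the Lyapunov inequality via a stopping time at level $b$ and then argue that $\tau_b=\infty$ for some random $b$, which again reduces to boundedness.
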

\begin{proof}
We first show that under conditions (a) and (b) of the lemma,
$V_n$ converges a.s.\ to some
finite limit $V_\infty$.
We claim that
\begin{equation}
\label{eq:liminf}
\Pr \left[ \{\liminf_{n \to \infty} V_n\le r\}\cup \{\exists \lim_{n \to
\infty} V_n>r\} \right]=1.
\end{equation}
Indeed, suppose that $\{\liminf_{n \to \infty} V_n\le r\}$ does not hold, so that
 $\liminf_{n \to \infty} V_n> r+\delta$ for some $\delta>0$. For $M \in \N$ let
$$
  \tau^{(M)} :=\inf\{n\ge M:\ V_n\le r+\delta\text{ or $A_n(\delta)$ occurs}\},
$$
and define
$V^{(M)}_n :=V_{n \wedge \tau^{(M)}}$.
Then, for each $M$, by (b), $(V^{(M)}_n)_{n\ge M}$, is a non-negative supermartingale and hence converges a.s..
On the other hand, from (b) and our assumption that $\liminf_{n \to \infty} V_n> r+\delta$ it follows that a.s.\ 
$\tau^{(M)}=\infty$ for {\it some\/} $M$; in this case $V^{(M)}_n\equiv V_n$ for all $n\ge M$ and hence $V_n$ must also converge a.s.,
and the limit must be greater than $r$. This establishes (\ref{eq:liminf}).

By an analogous argument with a
bounded submartingale we also establish
\begin{equation}\label{eq:limsup}
\Pr \left[ \{\limsup_{n \to \infty} V_n\ge r\}\cup \{\exists \lim_{n \to
\infty} V_n<r\} \right] =1.
\end{equation}
Given (\ref{eq:liminf}) and (\ref{eq:limsup}),
to show that $\lim_{n \to \infty} V_n$ exists a.s.\ it suffices to
demonstrate this convergence on the set
\begin{equation*}
E:=\{\limsup_{n \to \infty} V_n\ge r\}\cap\{\liminf_{n \to \infty} V_n\le
r\}.
\end{equation*}
Let us prove that on $E$ in fact $\limsup_{n \to \infty} V_n=r$.
For  $\delta>0$ define
$$
E_\delta:=E\cap \{\limsup V_n > y+\delta\} \text{ where }y=r+2\d.
$$
We will show that $\Pr[E_\delta]=0$ for any $\delta>0$,
 which yields the desired conclusion.

Fix some $\nu_0$ such that $V_{\nu_0}>y+\d$.
Iteratively for $i=0,1,2,\dots$ define
   \begin{align*}
       \tau_i& :=\min\{n>\nu_{i}:  V_n\le y-\delta\},\\
    \kappa_i& :=\min\{n>\tau_{i}: V_n> y-\delta\}, \\
    \nu_{i+1}& :=\min\{n>\tau_{i}: V_n\ge y+ \delta\}.
   \end{align*}
On $E$ we have $V_n \leq r + \delta$ infinitely often, so that $V_n \leq
y-\delta$ infinitely often. Thus our definitions imply that $\tau_i$,
{$\kappa_i$}, and $\nu_i$ are
finite for all $i$  on $E_\delta$.  Next, setting $B_n:=\{V_{n-1}\leq y-\delta,\
V_{n}>y\}$, we have by L\'evy's extension of the second Borel--Cantelli lemma (see e.g.\
\cite[Theorem 5.3.2]{durrett})
  \begin{align*}
  \{\{V_{\kappa_i} > y, \; \kappa_i < \infty \} \text{ i.o.}\} \subseteq \{B_n \text{
i.o.}\}=\left\{\sum_{n \in \N} \Pr_n[B_{n+1}]=\infty\right\},
  \end{align*}
  up to events of probability $0$.
On the other hand,
\begin{align}
\label{cheb1}
   \Pr_n[B_{n+1}]
&=
   \Pr_n[V_{n+1}>y] \1\{V_{n} \leq y-\delta\}
\nonumber\\
&\le     \Pr_n [
|V_{n+1}-V_n| > \delta ] \1\{V_{n} \leq y-\delta\}
\nonumber\\
&\le   \delta^{-2} \Exp_n
[ ( V_{n+1} - V_{n} )^2 ] \1 \{ V_{n} \leq  y-\delta  \} ,
\end{align}
by Chebyshev's inequality,
so that by (\ref{cheb1}) and (a), 
\begin{equation}
\label{cheb2}
 \sum_{n \in \N} \Pr_n[B_{n+1}]
 \le \sum_{n \in \N} \delta^{-2} C(y-\delta)  \gamma_{n}< \infty, \as
 \end{equation}   Thus on $E_\delta$, by (\ref{cheb2})
 and the Borel--Cantelli lemma, 
  $\{V_{\kappa_i} > y\}$ occurs only finitely often a.s., 
  so there is some $N_1 \in \N$ for which
$V_{\kappa_i} \in (y-\d, y]$ for all $i \geq N_1$.
Now let
   \begin{align*}
    \eta_i&:=\min\{n>{\kappa_{i}}: V_n \le y-\d\text{ or } V_n \ge y+\d\}
    \le {\nu_{i+1}}.
   \end{align*}
On $E_\delta$ all the $\eta_i$ are also finite (since the $\nu_i$ are
finite).
For $n \in \N$ define
   \begin{align*}
     I_n&=\left\{\begin{array}{ll}
     1, &\text{if } {\kappa_i}\le n<\eta_{i}\text{ for some }i \text{ and } A_n^{\rm c} \text{ occurs};\\
     0, &\text{otherwise},
     \end{array}\right.
     \\
  D_n&=\Exp_n[(V_{n+1}-V_n) I_n] \text{ and }
  M_n=\sum^{n-1}_{s={\kappa_0}} [(V_{s+1}-V_{s})I_s- D_s],
   \end{align*}
   with an empty sum understood as zero so that $M_{n} =0$ for $n \leq \kappa_0$.
Then $(M_n)_{n\in \N}$ is a zero-mean martingale adapted to $(\F_n)_{n\in\N}$, and it is not
hard to see that
$$
  \Exp_n [M_{n+1}^2-M_n^2]=\Exp_n [(M_{n+1}-M_n)^2]\le
\Exp_n[(V_{n+1}-V_n)^2 I_n ].
$$
Moreover, since for ${\kappa_i}\le n< \eta_i$ we have $y-\d<V_n\le
y+\d$, from (a) it follows that
\begin{align*}
  \Exp_n [ M_{n+1}^2- M_n^2] &\le C(y+\d) \g_n.
\end{align*}
This implies that the increasing process  associated with $M_n$ is
bounded by a constant times $\sum_{n\in\N} \gamma_n$ and hence is a.s.\ finite by (a).
Consequently, by  \cite[Theorem 5.4.9]{durrett}
the martingale $M_n$ converges a.s.~to some finite limit; in
particular, there is some $N_2 \in \N$ for which  
$\sup_{n,m \geq N_2} |M_n-M_m|<\d$ a.s.. 
Then for all $i \geq N_1$ such that  $\kappa_i\geq N_2$  we have 
\begin{align*}
   V_{\eta_i}=V_{\kappa_i}+
[M_{\eta_i}-M_{\kappa_i}]+\sum_{s=\kappa_i}^{\eta_i-1}D_s<y+\d ,
\end{align*}
since, by (b), $D_n\le 0$ for $n\in [\kappa_i,\eta_i)$, $n \geq N_2$, and
$V_{\kappa_i}\le y$ 
for all $i \geq N_1$. Consequently, the process $V_n$ eventually
exits the interval $(y-\d,y+\d)$ only on the left (and it cannot jump
over it, as we showed above), contradicting $E_\delta$. So
$\Pr[E_\delta]=0$.

A similar argument shows that on $E$ not only $\limsup_{n \to
\infty}
V_n=r$ but also $\liminf_{n \to \infty} V_n=r$; we sketch the changes needed
to adapt the previous argument to this case. Analogously to $E_\delta$ above, we define
$E'_\delta := E \cap \{ \liminf V_n < y-\delta\}$ where $y = r-2\delta$ and $\delta \in (0,r/3)$.
Also fix some $\nu'_0$ such that $V_{\nu'_0} < y -\delta$, and iteratively set 
 \begin{align*}
       \tau'_i& :=\min\{n>\nu'_{i}:  V_n\ge y+\delta\},\\
    \kappa'_i& :=\min\{n>\tau'_{i}: V_n< y+\delta\}, \\
    \nu'_{i+1}& :=\min\{n>\tau'_{i}: V_n\le y- \delta\}.
   \end{align*}
This time let $B'_n := \{ V_{n-1} \geq y+\delta, V_n < y \}$. Now
by definition of $A_n^{\rm c}$,
$\{ V_n > (1-\eps)^{-1} r \} \cap A_n^{\rm c} \subseteq \{ V_{n+1} > r \} \subseteq (B_{n+1}')^{\rm c}$, so that
\begin{align*} \Pr_n [ B'_{n+1} ] & \leq \Pr_n [ B'_{n+1} ] \1 ( A_n^{\rm c} ) + \1 ( A_n ) \\
& \leq \Pr_n [ B'_{n+1} ] \1 \{ V_n < (1-\eps)^{-1} r \} + \1 ( A_n ) .\end{align*}
A similar argument as that for (\ref{cheb1}) and (\ref{cheb2}), using Chebyshev's inequality and (a), with
$C(y-\delta)$ in (\ref{cheb2}) now being replaced by $C((1-\eps)^{-1} r)$,
shows that, a.s., $\Pr_n [ B'_{n+1} ] \1 \{ V_n < (1-\eps)^{-1} r \}$
is summable, while $\1 ( A_n )$ is a.s.\ summable by assumption in (b). As before, it follows that
$\{ V_{\kappa'_i} < y \}$ a.s.\ occurs only finitely often. Then a similar argument to the previous case, with the martingale $M_n$,
shows that $\Pr[ E'_\delta ] =0$ as well.

Consequently  , on $E$, $\lim_{n
\to \infty} V_n$ a.s.\
exists and equals $r$ in this case. Thus the first claim of the lemma
follows, and
$V_\infty = \lim_{n \to \infty} V_n$ a.s.\ exists in $[0,\infty)$.

  To prove the second claim of the lemma, under the additional condition
(c), we show
  that $\Pr[ V_\infty \in (0,r ) \cup (r, \infty ) ] =0$.
To this end, suppose that $V_n\to y>r$ (the case {$y\in (0,r)$} can be
handled similarly). Choose a small $\d>0$ such that $y-\d>r$.
Then a.s.\ there exists an $N_3$ such that $|V_n-y|<\d$ for all
$n\ge N_3$. Now  define $D'_n=\Exp_n[(V_{n+1}-V_n) ]$
 and
 the martingale
$M'_n=\sum_{s=1}^{n-1} [(V_{s+1}-V_{s})- D'_s]$. 
Then by (c) we have that for all $n \geq N_3$,
$D'_n=\Exp_n[(V_{n+1}-V_n) ]\le -\tilde C(y-\delta, y+\d)\tilde\g_n$.
By a similar argument
to that for $M_n$ above, $M'_n$ must
converge a.s.. However this leads to a contradiction with the
inequality
  \begin{align*}
  [V_{n} - V_{N_3}]-[M'_{n} - M'_{N_3}]&=\sum_{s=N_3}^{n-1}
   D'_s \le - \tilde
   C(y-\d,y+\d)\sum_{s=N_3}^{n-1} \tilde\g_s,
  \end{align*}
since, a.s., as $n\to\infty$ the right-hand side converges to $-\infty$
while the
left-hand side
converges to a finite limit.
  \end{proof}

 Now we can give the proof of Theorem \ref{lln1}.

 \begin{proof}[Proof of Theorem \ref{lln1}.]
It suffices to prove that
  \begin{equation}  \label{zlln2}
     \lim_{n\to\infty}\frac{n}{Z_n^{1+\beta}}=\frac{2+\beta}{\rho(1+\beta)}, \as.
   \end{equation}
Set $V_n=(n-1)/Z_n^{1+\beta}$ and $\tilde
V_n=n/Z_n^{1+\beta}=\frac{n}{n-1}V_n$. Writing  $D_n :=Z_{n+1}-Z_n$, we have
 \begin{align*}
V_{n+1} - V_n &= \tilde V_n \left[ \left(1+ \frac{D_n}{Z_n} \right)^{-(1+\beta)}-
 \left(1-\frac 1n\right)\right]
 =\tilde V_n \left[ \frac 1n -\frac{(1+\beta)D_n}{Z_n} +O( Z_n^{-2}) \right],
 \end{align*}
 using Taylor's formula and (\ref{z1b}) for the error term. Hence
\begin{equation}
\label{eq33}
V_{n+1} - V_n  = \frac{\tilde V_n}n \left[1 -\frac{(1+\beta)n D_n} {Z_n} +O(n Z_n^{-2})\right]
 .\end{equation}
Taking conditional expectations in (\ref{eq33}) we obtain, on $\{ Z_n \to \infty \}$, a.s., 
  \begin{align}\label{eq:edy}
   \Exp_n [ V_{n+1} - V_n ] & =
   \frac{\tilde V_n}n \left[1 -\frac{(1+\beta)n \mu_1 (n ; Z_n) } {Z_n} +O(n Z_n^{-2})\right] \nonumber\\
   &=
   \frac {\tilde V_n}{n}  \left[  2+\beta+o(1) - \left((1+\beta) \rho +o(1)\right) V_n
   \right],
  \end{align}
  using (\ref{z2}), and then using  the fact that $Z_n \to \infty$  to simplify the error terms.
Similarly, squaring both sides of (\ref{eq33}) and taking expectations,
on $\{ Z_n \to \infty \}$, a.s.,
  \begin{align*}
   \Exp_n [ (V_{n+1} - V_n)^2  ]
   & = \frac{\tilde V_n^2}{n^2} \left[ 1 - \frac{2(1+\beta) n \mu_1 ( n ; Z_n)}{Z_n} (1 + o(1)) + \frac{(1+\beta)^2 n^2 \mu_2 (n ; Z_n )}{Z_n^2} (1+o(1)) \right],  
   \end{align*}
   using (\ref{z1b}) to obtain the error terms. Then from  (\ref{z3b}) and (\ref{z2}) we obtain
   \begin{align}
   \label{eq:var}
      \Exp_n [ (V_{n+1} - V_n)^2  ]
   & = \frac{\tilde V_n^2}{n^{2}}
   \left[ 3 + 2\beta + o(1) - \left( 2 \rho (1+\beta) + o(1) \right) V_n + \frac{(c + o(1)) n^2}{ Z_n^{2} }  \right]
     ,\end{align}
   for some $c \in (0,\infty)$ (depending on $s^2$ and $\beta$)
   as $Z_n \to \infty$ and $n \to \infty$. 
   For a fixed $b>0$ and $A < \infty$, there exists a (non-random) $n_0$ for which $\{ V_n \leq b \}$ implies that
   $\{ Z_n \geq A \}$ for all $n \geq n_0$. In particular, from (\ref{eq:var}) we have that
   for  some (non-random) $C(b) < \infty$, on $\{ V_n \leq b\}$,    for any $n \in \N$, a.s.,
   \[  \Exp_n [ (V_{n+1} - V_n)^2  ] \leq \frac{ \tilde V_n^2}{n^2} \left[ O(1) + (c+o(1)) n^2  (\tilde V_n/n)^{\frac{2}{1+\beta}}    \right]
    \leq C(b) n^{-\frac{2}{1+\beta}} .\] 

  Since $\beta <1$, $\sum_{n\in\N} n^{-2/(1+\beta)} < \infty$
  so that the conditions of part (a) of Lemma~\ref{lower-lm}
  are satisfied with the present choice of $V_n$ and $\gamma_n = n^{-2/(1+\beta)}$.
  Let $A_n := \{ Z_n < A \}$. By Theorem \ref{superc}, $Z_n \to \infty$ a.s.,
  so that
  $A_n$ occurs only finitely often for any $A \in (0,\infty)$.
  Taking $r=\frac{2+\beta}{\rho(1+\beta)}$,
the conditions on $\Exp_n [ V_{n+1}]$ in part (b) of Lemma \ref{lower-lm}
are shown to hold for any $\delta \in (0,r)$, taking $A = A(\delta)$ sufficiently large, by (\ref{eq:edy}). 
Indeed, from (\ref{eq:edy}), on $\{ V_n > r +\delta \}$ for some $\delta \in (0,r)$,
\[ 
\Exp_n [ V_{n+1} - V_n ] \leq  - \delta (1+\beta) \rho (1+o(1)) n^{-1} \tilde V_n  ,
\]
which is negative on $A_n^{\rm c}$ for our choice of $A = A(\delta)$.
 A similar argument
holds for the other condition on $\Exp_n [ V_{n+1} ]$
 in Lemma \ref{lower-lm}(b). 
The final condition in (b), that $A_n^{\rm c}$ implies that $V_{n+1} > (1-\eps) V_n$
for some $\eps \in (0,1)$, follows from (\ref{eq33}) and the fact that $D_n$ is uniformly bounded
(by (\ref{z1b})), taking $A$ and $n$ sufficiently large in our choice of $A_n$.

The conditions in part (c) of Lemma
\ref{lower-lm} follow from (\ref{eq:edy}) again, with $\tilde \gamma_n = n^{-1}$,
noting that the $o(1)$ terms in (\ref{eq:edy}) are uniformly small on $\{V_n \leq b\}$
for any $n \geq n_0$ (for some non-random $n_0 \in \N$).

Hence we conclude from Lemma \ref{lower-lm} that
$V_n \to V_\infty$ a.s.~where $V_\infty \in \{ 0 , r\}$.
To complete the proof of the theorem
we must show that $\Pr[ V_n\to 0 ]=0$. This, however, follows from
the fact that $\limsup_{n \to \infty} ( n^{-1/(1+\beta)}  Z_n ) <\infty$ a.s.\ due to
\cite[Theorem 2.3]{superlamp}, noting the remark following that theorem.  \end{proof}

  \section{Proofs of main theorems on self-interacting walks}
  \label{proofs}

  \subsection{Recurrence classification: Proofs of Theorems \ref{ythm1}  and \ref{1dthm}}
  \label{recprf}

   We apply the results of Section \ref{1dproc}
   to  $Z_n = \| Y_n \| = \| X_n - G_n \|$.

   \begin{proof}[Proof of Theorem \ref{ythm1}.]
  Suppose that (A1) and (A2) hold, and that $\beta \geq 1$.
 First note that with $Z_n = \| Y_n \|$, (\ref{incbound}) and (\ref{lem3eq})
 imply (\ref{z1b}) and (\ref{z4b}). Now from (\ref{lem2eq2}) we obtain,
 with $r(n;x)$ defined by (\ref{rdef}),
    \[ \Exp_n [ Z_{n+1} - Z_n  ] = \left( \rho\1_{\{ \beta =1\}} + \frac{1}{2} (d-1) \sigma^2 \right)
  \frac{1}{Z_n} - \frac{Z_n}{n} + o(Z_n^{-1} r(n; Z_n)), \as \]
 Similarly, we have from
  (\ref{mom2ex}) that
  \[
 \Exp_n [ ( Z_{n+1} - Z_n )^2  ] = \sigma^2 +  O( Z_n n^{-1} ) + o ( (\log Z_n)^{-1} ).\]
 First suppose that
 $\beta =1$. Thus (\ref{z2b}) and (\ref{z3b}) hold with $\rho' = \rho + (d-1)(\sigma^2/2)$ and
 $s^2 = \sigma^2$. It follows from Theorems
 \ref{thm1} and \ref{thm2} that
 $Z_n$ is transient if and only if $2 \rho' > s^2$, or equivalently $2 \rho > \sigma^2 ( 2-d )$,
 i.e., $\rho > \rho_0$. This proves part (i) of the theorem.

 Finally suppose that $\beta >1$. This time
  (\ref{z2b}) and (\ref{z3b}) hold with $\rho' = (d-1)(\sigma^2/2)$ and
 $s^2 = \sigma^2$. It follows from Theorems
 \ref{thm1} and \ref{thm2} that
 $Z_n$ is transient if and only if $2 \rho' > s^2$, or equivalently
$ \sigma^2 ( 2-d ) < 0$,
 i.e., $d>2$. This proves part (ii).
 \end{proof}

\begin{proof}[Proof of Theorem \ref{1dthm}.] Suppose that $d=1$. If $Y_n$ is transient,
 then by (\ref{incbound}) we have that with probability 1 either:
(i) $Y_n \to + \infty$; or (ii) $Y_n \to - \infty$. In case (i), there exists $N \in [2,\infty)$
 for which $Y_n \geq 1$ for all $n \geq N$, so  (\ref{gandy}) with (\ref{incbound}) implies
 that for $n\geq N$,
 \[ G_n \geq X_1 - CN + \sum_{j=N}^n \frac{1}{j-1} \to \infty, \as ,\]
 as $n \to \infty$; a similar argument applies in case (ii).
 Since $X_n = Y_n + G_n$,
 and $Y_n$, $G_n$ are transient with the same sign,
 it follows that $X_n$ is transient too. \end{proof}

  \subsection{Limiting directions: Proof of Theorem \ref{dirthm}}
  \label{direction}

  The key first step in the proof of Theorem \ref{dirthm} is the
  following application of the law of large numbers, Theorem \ref{lln1}.

  \begin{lm}
  \label{ylln}
   Suppose that
   (A1) holds with $d \in \N$, $\beta \in [0,1)$, and $\rho > 0$.
As $n \to \infty$,
 \[ n^{-1/(1+\beta)}  \| X_n - G_n \| \toas  \ell (\rho, \beta ).
  \]
  \end{lm}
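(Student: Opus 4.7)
The plan is to apply the law of large numbers from Theorem \ref{lln1} directly to the one-dimensional process $Z_n := \| Y_n \| = \| X_n - G_n \|$. Since the hard analytic work has already been carried out (Proposition \ref{propinc} establishes the drift and jump structure of $\|Y_n\|$ from assumption (A1), and Theorem \ref{lln1} delivers the law of large numbers for any process with the required drift form), essentially all that remains is to check that $Z_n$ satisfies the three hypotheses (\ref{z1b}), (\ref{z4b}), and (\ref{z2}) of Theorem \ref{lln1}.

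First I would set $Z_n := \|Y_n\|$ and read off from Proposition \ref{propinc}(i) that (\ref{incbound}) gives the bounded-increment hypothesis (\ref{z1b}) and (\ref{lem3eq}) gives $\limsup_{n\to\infty} Z_n = \infty$ a.s., which is exactly (\ref{z4b}). Next I would use Proposition \ref{propinc}(ii), valid since $\beta \in [0,1)$, which provides
\[
 \Exp_n [Z_{n+1} - Z_n] = \rho Z_n^{-\beta} - \frac{Z_n}{n+1} + O\bigl(Z_n^{-\beta}(\log Z_n)^{-2}\bigr).
\]
To cast this in the form (\ref{z2}), I would observe that $\frac{Z_n}{n+1} = \frac{Z_n}{n} + O(Z_n/n^2) = \frac{Z_n}{n} + o(Z_n/n)$, while the error term $O(Z_n^{-\beta}(\log Z_n)^{-2})$ is $o(Z_n^{-\beta})$ as $Z_n$ grows (and is absorbed trivially when $Z_n$ is bounded because all quantities involved are then bounded). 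This yields precisely the hypothesis (\ref{z2}) with the same $\rho > 0$ and $\beta \in [0,1)$ as in (A1).

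Having verified all the hypotheses, Theorem \ref{lln1} applies and gives
\[
 n^{-1/(1+\beta)} \| X_n - G_n \| = n^{-1/(1+\beta)} Z_n \toas \ell(\rho, \beta),
\]
which is the claim. The only point requiring any care is the matching of the error term shapes in (\ref{z2}) against those provided by Proposition \ref{propinc}(ii); this is a routine but genuine check since Proposition \ref{propinc}(ii) phrases its error as an asymptotic statement as $\|Y_n\| \to \infty$, whereas (\ref{z2}) is formally a statement about the $\mu_1(n;x)$ functions uniformly in $n$. I do not anticipate any substantive obstacle: because Theorem \ref{lln1} (via its reliance on Theorem \ref{superc}) first forces $Z_n \to \infty$ a.s.\ under the present hypotheses, the asymptotic regime where Proposition \ref{propinc}(ii) applies is exactly the relevant one.
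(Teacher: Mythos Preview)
Your proposal is correct and follows essentially the same approach as the paper: set $Z_n = \|Y_n\|$ and apply Theorem \ref{lln1}, verifying its hypotheses (\ref{z1b}), (\ref{z4b}), and (\ref{z2}) via (\ref{incbound}), (\ref{lem3eq}), and (\ref{lem2eq}) from Proposition \ref{propinc}. Your additional remarks on matching the error terms are more explicit than the paper's one-line verification but amount to the same argument.
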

  \begin{proof} We take $Z_n = \| Y_n \| = \| X_n -G_n\|$ and apply Theorem \ref{lln1}.
The conditions of the latter are verified since (\ref{incbound}), (\ref{lem3eq}),
  and (\ref{lem2eq}) imply
 (\ref{z1b}), (\ref{z4b}), and (\ref{z2}) respectively. \end{proof}

  The second step in the proof of Theorem \ref{dirthm}
  is to show that the process $Y_n = X_n - G_n$ has
  a limiting direction. Together with Lemma \ref{ylln}
  and the simple but useful relation (\ref{gandy}), we will then be
  able to deduce the asymptotic behaviour of $X_n$ and $G_n$.

  We use the notation $\hat Y_n := Y_n / \| Y_n\|$, with the convention that
 $\hat \0 := \0$.
  Then $(\hat Y_n)_{n \in \N}$ is an $(\F_n)$-adapted process,
  and using the vector-valued version of Doob's decomposition we may write
  \begin{equation}
  \label{ddecomp}
  \hat Y_n = A_n + M_n,
  \end{equation} where $M_1 = \hat Y_1$, $(M_n)_{n \in \N}$ is an $(\F_n)$-adapted
  $d$-dimensional martingale and $(A_n)_{n \in \N}$ is the previsible sequence defined by
  $A_1 = \0$ and
  $A_{n} = \sum_{m=1}^{n-1} \Exp_m [ \hat Y_{m+1} - \hat Y_m ]$ for $n \geq 2$.

  \begin{lm}
  \label{dlem1}
   Suppose that
   (A1) holds with $d \in \N$, $\beta \in [0,1)$, and $\rho > 0$.
   Let the Doob decomposition of $\hat Y_n$ be as given at (\ref{ddecomp}).
  There exists a $d$-dimensional random vector $A_\infty$ such that
  $A_n \to A_\infty$ a.s., as $n \to \infty$.
  \end{lm}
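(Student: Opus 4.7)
The plan is to prove that $\sum_{m=1}^\infty \| \Exp_m [ \hat Y_{m+1} - \hat Y_m ] \| < \infty$ a.s., which yields absolute a.s.\ convergence of $A_n$ to some random limit $A_\infty$. The key step is a drift computation for the direction process $\hat Y_n$. Since the factor $m/(m+1)$ in (\ref{yeq}) cancels under normalisation, $\hat Y_{m+1} = (Y_m + \Delta_m)/\|Y_m + \Delta_m\|$. Writing $r_m := \|Y_m\|$ and $\bu_m := \hat Y_m$, a Taylor expansion of $(1 + 2 \bu_m \cdot \Delta_m/r_m + \|\Delta_m\|^2/r_m^2)^{-1/2}$, legitimate for $r_m$ large compared with $B$ by (\ref{bound}), yields after collecting terms
\begin{equation*}
   \hat Y_{m+1} - \hat Y_m \ = \ \frac{ \Delta_m - ( \bu_m \cdot \Delta_m ) \bu_m }{r_m} + R_m, \qquad \| R_m \| = O ( r_m^{-2} ) ,
\end{equation*}
where the remainder $R_m$ collects second-order contributions whose entries are bounded products of coordinates of $\Delta_m$, deterministically controlled via (\ref{bound}).

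The decisive observation is that the leading-order term above is the component of $\Delta_m$ perpendicular to $\bu_m$. Taking conditional expectations and applying (\ref{drift}) gives $\Exp_m[\Delta_m] = \rho r_m^{-\beta} \bu_m + \eps_m$ with $\| \eps_m \| = O(r_m^{-\beta}(\log r_m)^{-2})$, so the radial main drift is annihilated by the orthogonal projection and only $\eps_m - (\bu_m \cdot \eps_m) \bu_m$ survives. Combined with $\| \Exp_m R_m \| \leq \Exp_m \| R_m \| = O(r_m^{-2})$, this gives
\begin{equation*}
   \| \Exp_m [ \hat Y_{m+1} - \hat Y_m ] \| \ = \ O \bigl( r_m^{-1-\beta} ( \log r_m )^{-2} \bigr) + O ( r_m^{-2} ), \as,
\end{equation*}
for all $m$ with $r_m$ sufficiently large.

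Finally, Lemma \ref{ylln} gives $r_m \sim \ell(\rho,\beta) m^{1/(1+\beta)}$ a.s., so the bound becomes $O(m^{-1} (\log m)^{-2}) + O(m^{-2/(1+\beta)})$ a.s. Both summands are summable: the first by the integral test, the second because $\beta \in [0,1)$ gives $2/(1+\beta) > 1$. The (random but a.s.\ finite) set of early indices for which $r_m$ is too small to run the expansion contributes only a finite total, since $\| \hat Y_{m+1} - \hat Y_m \| \leq 2$ always. Hence $\sum_m \| \Exp_m [ \hat Y_{m+1} - \hat Y_m ] \| < \infty$ a.s., and $A_n \to A_\infty$ a.s.

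The conceptual crux, and the one place a naive estimate would fail, is the cancellation of the radial leading drift: absent the observation that $\rho r_m^{-\beta} \bu_m$ is killed by the projection orthogonal to $\bu_m$, the drift of $\hat Y_n$ would carry a non-trivial radial piece of order $r_m^{-1-\beta} \sim m^{-1}$, which is not summable. One genuinely needs both this geometric cancellation and the logarithmic improvement built into (\ref{drift}) to conclude; morally, this reflects the fact that the drift of $X_n$ stretches $Y_n$ along $\hat Y_n$ but does not rotate its direction at leading order.
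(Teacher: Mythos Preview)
Your proof is correct and follows essentially the same approach as the paper. Both arguments expand $\hat Y_{m+1}-\hat Y_m$ using (\ref{yeq}), exploit the cancellation of the leading radial drift $\rho r_m^{-\beta}\bu_m$ (the paper via the decomposition $T_1-\hat Y_n T_2$, you via the explicit orthogonal projection $\Delta_m-(\bu_m\cdot\Delta_m)\bu_m$, which are algebraically equivalent), invoke Lemma~\ref{ylln} to convert the resulting bound $O(r_m^{-1-\beta}(\log r_m)^{-2})+O(r_m^{-2})$ into summable powers of $m$, and conclude absolute convergence of $A_n$.
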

  \begin{proof}
  We have from (\ref{yeq}) that, with $\Delta_n := X_{n+1} - X_n$ as usual,
  \begin{align*}
  A_{n+1} - A_n & = \Exp_n \left[ \frac{Y_n + \Delta_n}{\| Y_n + \Delta_n \|} - \hat Y_n \right]
  = \Exp_n \left[ \frac{\Delta_n}{\| Y_n + \Delta_n \|} \right] - \hat Y_n \Exp_n \left[ \frac{ \| Y_n + \Delta_n \| - \| Y_n \|}{  \| Y_n + \Delta_n \|} \right] \\
  & =: T_1 - \hat Y_n T_2.\end{align*}
  We deal with the expectations $T_1$ and $T_2$ separately. First,
  \[  T_1 = \| Y_n \|^{-1} \Exp_n [ \Delta_n ]
  - \Exp_n \left[  \frac{ (\| Y_n + \Delta_n \|  - \| Y_n \|) \Delta_n }{\| Y_n \| \| Y_n + \Delta_n \|  }  \right] .\]
  The numerator in the last expectation is bounded in absolute value by $\| \Delta_n \|^2$, by the triangle inequality.
  Then using the fact that $\| \Delta_n \|$ is uniformly bounded, and that $\| Y_n\| \sim \ell(\rho,\beta) n^{1/(1+\beta)}$ by Lemma
  \ref{ylln}, it follows that
  \[  T_1 = \| Y_n \|^{-1} \Exp_n [ \Delta_n ] + O ( n^{-2/(1+\beta)} ) , \as, \]
  as $n \to \infty$.  Similarly, we have that
  \[ T_2
  = \Exp_n \left[ \frac{ \| Y_n + \Delta_n \|^2 - \| Y_n \|^2}{  \| Y_n + \Delta_n \| (\| Y_n + \Delta_n \| + \| Y_n \| )} \right] .\]
  Again using the boundedness of $\| \Delta_n \|$  and  that $\|Y_n\| \sim \ell(\rho,\beta) n^{1/(1+\beta)}$, we obtain
  \[ T_2 =
  \Exp_n \left[ \frac{ 2 \Delta_n \cdot Y_n + \| \Delta_n \|^2}{  \| Y_n + \Delta_n \| (\| Y_n + \Delta_n \| + \| Y_n \| )} \right] =
   \Exp_n \left[ \frac{\hat Y_n \cdot \Delta_n }{ \| Y_n \|  } \right] + O ( n^{-2/(1+\beta)} ) , \as.\]
  On applying (\ref{drift}) to evaluate the terms $\Exp_n [ \Delta_n]$ and $\Exp_n [ \Delta_n \cdot \hat Y_n ]$,
  the leading terms in $T_1$ and $\hat Y_n T_2$ cancel to give
  \[ A_{n+1} - A_n = O ( \| Y_n \|^{-\beta-1} (\log \| Y_n \| )^{-2} ) +  O ( n^{-2/(1+\beta)} ) .\]
 Since $\|Y_n\| \sim \ell(\rho,\beta) n^{1/(1+\beta)}$, and $\beta <1$,   these two $O (\, \cdot\, )$
  terms are summable, so that $\sum_{n=1}^\infty \| A_{n+1} - A_n \| < \infty$, a.s.,
  implying that $A_n$ converges a.s.. \end{proof}

  \begin{lm}
  \label{dlem2}
     Suppose that
   (A1) holds with $d \in \N$, $\beta \in [0,1)$, and $\rho > 0$.
 Let the Doob decomposition of $\hat Y_n$ be as given at (\ref{ddecomp}).
  There exists a $d$-dimensional random vector $M_\infty$ such that
  $M_n \to M_\infty$ a.s., as $n \to \infty$.
  \end{lm}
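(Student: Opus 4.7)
The plan is to show that the martingale $(M_n)_{n \in \N}$ has summable conditional quadratic variation, so that it converges almost surely by the standard $L^2$-bounded martingale convergence theorem (applied componentwise, or as in \cite[Theorem 5.4.9]{durrett} as used in the proof of Lemma~\ref{lower-lm} above).

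First I would control the size of the raw increment $\hat Y_{n+1} - \hat Y_n$. Using (\ref{yeq}) we have $\hat Y_{n+1} = (Y_n + \Delta_n)/\| Y_n + \Delta_n \|$. Writing this as a perturbation of $\hat Y_n = Y_n/\| Y_n\|$ and using the elementary identity
\[
\frac{\by+\bz}{\|\by+\bz\|} - \frac{\by}{\|\by\|} = \frac{\bz}{\|\by+\bz\|} - \frac{\by}{\|\by\|}\cdot\frac{\|\by+\bz\|-\|\by\|}{\|\by+\bz\|},
\]
together with the triangle inequality $\bigl|\|\by+\bz\|-\|\by\|\bigr| \leq \|\bz\|$ and the uniform bound $\| \Delta_n \| \leq B$ from (\ref{bound}), I expect to obtain
\[
\| \hat Y_{n+1} - \hat Y_n \| \leq \frac{C \| \Delta_n \|}{\| Y_n \|} \leq \frac{C'}{\| Y_n \|}, \as,
\]
for all $n$ with $\| Y_n \|$ sufficiently large.

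Next I would invoke Lemma~\ref{ylln}: since $\beta \in [0,1)$ and $\rho>0$, we have $\| Y_n \| \sim \ell(\rho,\beta) n^{1/(1+\beta)}$, so that almost surely $\| Y_n \| \geq c n^{1/(1+\beta)}$ for all $n$ large enough. Combined with the previous display this gives
\[
\Exp_n \bigl[ \| M_{n+1} - M_n \|^2 \bigr] \leq \Exp_n \bigl[ \| \hat Y_{n+1} - \hat Y_n \|^2 \bigr] \leq \frac{C''}{n^{2/(1+\beta)}}, \as,
\]
for all large $n$, using that the martingale increment is the centring of $\hat Y_{n+1} - \hat Y_n$. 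Since $\beta < 1$, the exponent $2/(1+\beta) > 1$, and therefore the series $\sum_n \Exp_n[\| M_{n+1} - M_n \|^2]$ converges almost surely.

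Finally, applying the convergence theorem for vector-valued martingales with almost surely finite conditional quadratic variation (componentwise, as in \cite[Theorem 5.4.9]{durrett}), I conclude that $M_n$ converges almost surely to some $\R^d$-valued random vector $M_\infty$. The main technical point is the elementary inequality relating $\| \hat Y_{n+1} - \hat Y_n \|$ to $\| \Delta_n \|/\| Y_n \|$, but this is straightforward; the real content is that Lemma~\ref{ylln} supplies the almost-sure lower bound on $\| Y_n \|$ that makes $2/(1+\beta) > 1$ summable, which is precisely why $\beta < 1$ is needed.
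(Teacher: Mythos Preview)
Your proposal is correct and follows essentially the same approach as the paper: both bound $\Exp_n[\|M_{n+1}-M_n\|^2]$ by $\Exp_n[\|\hat Y_{n+1}-\hat Y_n\|^2]$, use the same elementary identity and the triangle inequality to control $\|\hat Y_{n+1}-\hat Y_n\|$ by $2\|\Delta_n\|/\|Y_n+\Delta_n\|$, invoke Lemma~\ref{ylln} to convert this to $O(n^{-1/(1+\beta)})$, and conclude via summability of $n^{-2/(1+\beta)}$ for $\beta<1$ and the martingale convergence theorem. The only cosmetic difference is that the paper phrases the conclusion in terms of $\sum_n \Exp_n[\|M_{n+1}\|^2-\|M_n\|^2]<\infty$ rather than $\sum_n \Exp_n[\|M_{n+1}-M_n\|^2]<\infty$, which are of course equivalent by the martingale orthogonality identity.
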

  \begin{proof}
  Taking expectations in the vector identity $\|M_{n+1} - M_n\|^2 =\| M_{n+1}\|^2 - \|M_n\|^2 - 2 M_n \cdot (M_{n+1} - M_n)$
  and using the martingale property, we have
  \[ \Exp_n [ \| M_{n+1} \|^2 - \|M_n\|^2 ] = \Exp_n [ \| M_{n+1} - M_n \|^2 ] = \Exp_n [ \| \hat Y_{n+1} - \hat Y_n - \Exp_n [ \hat Y_{n+1} - \hat Y_n ] \|^2 ] .\]
  Expanding out the expression in the latter expectation, it follows that
  \[  \Exp_n [ \| M_{n+1}\|^2 -\| M_n\|^2 ] = \Exp_n [ \| \hat Y_{n+1} - \hat Y_n \|^2 ] - ( \Exp_n [ \hat Y_{n+1} - \hat Y_n ] )^2  \leq \Exp_n [ \| \hat Y_{n+1} - \hat Y_n \|^2 ] .\]
Here we have from (\ref{yeq}) that
\[ \| \hat Y_{n+1} - \hat Y_n \| = \left\| \frac{ Y_n (\| Y_n \| - \| Y_n + \Delta_n \| ) + \Delta_n \| Y_n \|}{\| Y_n \| \|Y_n + \Delta_n \|} \right\|
\leq \frac{ 2 \| \Delta_n \|}{\| Y_n + \Delta_n \|} ,\]
by the triangle inequality.
Since $\| \Delta_n \|$ is uniformly bounded, and $\|Y_n\| \sim \ell (\rho,\beta) n^{1/(1+\beta)}$ by Lemma
  \ref{ylln}, it follows that $\| \hat Y_{n+1} - \hat Y_n \| = O ( n^{-1/(1+\beta)})$,
  so that
\[ \Exp_n [ \| M_{n+1} \|^2 - \|M_n\|^2 ] = O ( n^{-2/(1+\beta)}), \as. \]
Hence $\sum_{n=1}^\infty \Exp_n [ \| M_{n+1} \|^2 - \|M_n\|^2 ] < \infty$, a.s.,
which implies that $M_n$ has an almost-sure limit,
by e.g.\ the $d$-dimensional version of \cite[Theorem 5.4.9, p.\ 217]{durrett}. \end{proof}

\begin{proof}[Proof of Theorem \ref{dirthm}.]
Combining Lemmas \ref{dlem1} and \ref{dlem2} with the decomposition (\ref{ddecomp}), we conclude that $\hat Y_n \to A_\infty + M_\infty =: \bu$,
for some random unit vector $\bu$,
a.s., as $n \to \infty$. In other words, the process $Y_n$ has a limiting direction.
It follows from the representation
(\ref{gandy})
that the processes $G_n$ and $X_n$ have the same limiting direction. Specifically,
\[ G_n = X_1 + \sum_{j=2}^n \frac{1}{j-1} \| Y_j \| \hat Y_j = X_1
+ \sum_{j=2}^n \frac{1}{j-1} [\ell(\rho,\beta) + o(1) ] j^{1/(1+\beta)} [\bu + o(1)], \as, \]
by Lemma \ref{ylln}. Hence
\[ G_n = [ (1+\beta) \ell(\rho,\beta) \bu   + o(1) ] n^{1/(1+\beta)}, \as, \]
and  the result for $X_n$ follows since $X_n = G_n + Y_n$.
  \end{proof}

 \subsection{Upper bounds: Proof of Theorem \ref{extent}}
 \label{boundprf}

 Theorem \ref{extent} will follow
 from the next result, which gives  bounds   for $\| Y_n\|$.

 \begin{proposition}
 \label{extent2}
 Suppose that (A1) holds with $d \in \N$, $\beta \geq 0$, and $\rho \in \R$.
 Then the following bounds apply.
 \begin{itemize}
  \item[(i)]  If $\beta \geq 1$, then for any $\eps>0$,
  a.s., for all but finitely many $n \in \N$,
  $\| Y_n \| \leq   n^{1/2} ( \log n)^{(1/2)+\eps}$.
 \item[(ii)] If (A2) holds, $\beta =1$, and $\rho < - (d\sigma^2/2)$, then for any $\eps>0$,
 a.s., for all but finitely many $n \in \N$, $\| Y_n \| \leq n^{\gamma (d,\sigma^2, \rho) + \eps}$
 where $\gamma (d,\sigma^2, \rho)$ is given by (\ref{gammadef}).
 \item[(iii)]
  If $\beta \in [0,1)$ and $\rho <0$,
  then for any $\eps>0$,
    a.s., for all but finitely many $n \in \N$,
  $\| Y_n \| \leq    (\log n)^{\frac{1}{1-\beta} +\eps}$.
  \end{itemize}
  \end{proposition}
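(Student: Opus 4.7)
The strategy for each part is the same: apply Proposition \ref{propinc} to the non-negative process $Z_n := \|Y_n\|$ to obtain sharp drift and second-moment asymptotics, construct a Lyapunov function $f(z, n)$ so that $f(Z_n, n)$ is an eventual supermartingale (i.e.\ on the event $\{Z_n \geq \text{threshold}\}$), and finally invoke the almost-sure convergence/concentration machinery for such processes developed in \cite{mvw}. The uniformly bounded increment (\ref{incbound}) plays a crucial technical role throughout, as it gives summable tail estimates for Borel--Cantelli arguments and justifies the Taylor expansions below.

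For part (i) only (A1) is in force, so the best available drift information is (\ref{lem2eq20}) with $\Theta_n \in [0, C]$, together with the bounded variance $\Exp_n[(Z_{n+1}-Z_n)^2] \leq C^2$. These combine to give
\[
\Exp_n[Z_{n+1}^2 - Z_n^2] \leq K - \frac{2 Z_n^2}{n+1} + o(1), \as,
\]
and testing with $f(z, n) := z^2/\bigl(n(\log n)^{1 + 2\eps}\bigr)$ yields an eventual supermartingale whose a.s.\ convergence delivers the desired $n^{1/2}(\log n)^{1/2+\eps}$ bound for $\|Y_n\|$.

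For part (ii), (A2) is assumed and (\ref{lem2eq2})--(\ref{mom2ex}) are available. Set $p := 1/\gamma(d,\sigma^2,\rho) = 2 - d - 2\rho/\sigma^2$, noting $p > 2$ by the hypothesis $\rho < -d\sigma^2/2$. A Taylor expansion of $z^p$ together with the algebraic identity
\[
p \Bigl(\rho + \tfrac{1}{2}(d - 1)\sigma^2\Bigr) + \tfrac{1}{2} p(p - 1)\sigma^2 = 0,
\]
which is exactly the defining relation for $p = 1/\gamma$, kills the leading $Z_n^{p - 2}$ contribution in $\Exp_n[Z_{n+1}^p - Z_n^p]$, leaving
\[
\Exp_n[Z_{n+1}^p - Z_n^p] \leq -\frac{p Z_n^p}{n} + o(Z_n^p/n), \as.
\]
Testing with $z^p/n^{p(\gamma + \eps)}$ then produces an eventual supermartingale, and the resulting a.s.\ bound yields $\|Y_n\| \leq n^{\gamma + \eps}$.

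For part (iii), both $\rho Z_n^{-\beta}$ and $-Z_n/n$ in (\ref{lem2eq}) are negative, so $Z_n$ is heavily confined. Using the exponential Lyapunov function $V(z) := \exp(\lambda z^{1 - \beta})$ with $\lambda > 0$ chosen small enough that the second-moment (Taylor) correction does not overcome the drift, a direct computation shows that $V(Z_n)$ is a supermartingale whenever $Z_n$ exceeds a fixed threshold, and consequently $\Exp[V(Z_n)]$ remains bounded in $n$. Markov's inequality converts $\Pr[V(Z_n) > n^{1 + \delta}] \leq O(n^{-1 - \delta})$ via Borel--Cantelli into $\|Y_n\|^{1 - \beta} \leq (1+\delta)\lambda^{-1} \log n$ a.s.\ for all but finitely many $n$, which yields the claimed bound $\|Y_n\| \leq (\log n)^{1/(1-\beta)+\eps}$ after absorbing the constant into $\eps$. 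The main obstacle is case (ii): the exponent $\gamma$ is dictated by a delicate algebraic cancellation, and the remainder terms in (\ref{lem2eq2}) and (\ref{mom2ex}) must be shown to be genuinely subleading uniformly in $n$ so that the sign of the drift of $Z_n^p$ is preserved at all scales where the supermartingale argument is applied.
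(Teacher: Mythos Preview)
Your approach for parts (i) and (iii) is correct and close to the paper's. For (i) the paper simply checks that $Z_n\mu_1(n;Z_n)$ is bounded above (the $-Z_n/(n+1)$ contribution is negative, the rest is $O(1)$) and invokes \cite[Theorem~4.1(i)]{mvw}; your explicit Lyapunov function $Z_n^2/(n(\log n)^{1+2\eps})$ achieves the same thing directly. For (iii) both you and the paper use an exponential test function $\exp(\lambda z^\alpha)$: the paper takes $\alpha<1-\beta$ and $\lambda=1$, you take $\alpha=1-\beta$ and $\lambda$ small; either choice works. One small fix in (iii): since $Z_n$ is recurrent here, $\Exp[V(Z_n)]$ is not bounded but is $O(n)$ (an $O(1)$ contribution is picked up at each return below threshold). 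This is harmless---apply Markov's inequality with threshold $n^{2+\delta}$ rather than $n^{1+\delta}$ and Borel--Cantelli still goes through.

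The genuine gap is in part (ii), and it is precisely the obstacle you flag but do not resolve. With $p=1/\gamma$ the Taylor expansion gives
\[
\Exp_n\bigl[Z_{n+1}^p-Z_n^p\bigr]
= -\frac{pZ_n^p}{n+1}
+ o\Bigl(\frac{Z_n^{p-2}}{\log Z_n}\Bigr)
+ O\Bigl(\frac{Z_n^{p-1}}{n}\Bigr)
+ O(Z_n^{p-3}),
\]
and the residual $o(Z_n^{p-2}/\log Z_n)$ is \emph{not} $o(Z_n^p/n)$ at the boundary $Z_n\sim n^{\gamma+\eps}$: there $Z_n^p/n\sim n^{p\eps}$ while $Z_n^{p-2}\sim n^{1-2\gamma+(p-2)\eps}$, and since $\gamma<1/2$ the latter exponent is strictly larger for small $\eps$. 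Hence your claimed inequality fails exactly where the supermartingale property is needed, and the test function $Z_n^p/n^{p(\gamma+\eps)}$ does not work.

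The paper's fix is simple but essential: take exponent $p'=1+\kappa'=p-\eps$ rather than $p$. Then the $Z_n^{p'-2}$ coefficient is no longer zero but equals $-p'\eps\sigma^2/2<0$, which dominates the $o(Z_n^{p'-2}/\log Z_n)$ error for large $Z_n$ regardless of the relation between $Z_n$ and $n$. This yields $\Exp_n[Z_{n+1}^{p'}-Z_n^{p'}]$ uniformly bounded above (indeed, negative for large $Z_n$), and \cite[Theorem~3.2]{mvw} gives $Z_n^{p'}\leq n^{1+\eps'}$ eventually, hence $Z_n\leq n^{\gamma+\eps''}$. Note that the $-pZ_n^p/n$ term, which your argument relies on, plays no role in the paper's proof.
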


 To prove this result we apply some general results
 from \cite{mvw}.
 Section 4 of \cite{mvw} dealt
 with  stochastic processes that were time-homogeneous,
 but that condition was not used
 in the proofs of the results that we apply here,
 which relied on the very general results
 of Section 3 of \cite{mvw}: the basic tool
 is Theorem 3.2 of \cite{mvw}.

It is most convenient to again work in some generality.
Again let $(Z_n)_{n \in \N}$ denote a stochastic process on $[0,\infty)$.
Recall the definition of $\mu_k(n;x)$ from (\ref{mudef}).
 The next result gives the upper bounds that we need.
 Part (i) is contained in \cite[Theorem
 4.1(i)]{mvw}. Part (ii) is a variation on   \cite[Theorem 4.3(i)]{mvw} that is more suited to the present application.
 Part (iii) is also based on \cite{mvw}
 but does not seem to have appeared before.

 \begin{lm}
 \label{boundslem}
 Suppose that $(Z_n)_{n \in \N}$ is such that
 (\ref{z1b}) holds.
 \begin{itemize}
 \item[(i)] Suppose that for some $A < \infty$,
  $x \mu_1 (n ; x ) \leq A$
 for all $n$ and $x$ sufficiently large. Then for any $\eps>0$,
  a.s., $Z_n \leq   n^{1/2} (\log n)^{(1/2)+\eps}$   for all but finitely many $n \in \N$.
 \item[(ii)] Suppose that for some $v>0$ and $\kappa >1$,
 $2 x \mu_1 (n ; x) \leq - \kappa \mu_2 (n ; x) + o(1)$ and $\mu_2 (n;x) \geq v$
 for all $n$ and $x$ sufficiently large. Then, for any $\eps >0$,
 a.s.~$Z_n \leq   n^{\frac{1}{1+\kappa} +\eps}$   for all but finitely many $n \in \N$.
 \item[(iii)]
   Suppose that
 for some $\beta \in [0,1)$ and $A>0$,
 $x^\beta \mu_1 (x ; n) \leq -A$ for all $n$ and $x$ large enough. Then
 for any $\eps>0$,
 a.s., for all but finitely many $n \in \N$, $Z_n \leq (\log n)^{\frac{1}{1-\beta} +\eps}$.
 \end{itemize}
 \end{lm}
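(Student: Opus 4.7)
The plan is to apply the abstract framework of \cite[Section 3]{mvw}, specifically \cite[Theorem 3.2]{mvw}, which translates bounds on $\Exp_n[f(Z_{n+1}) - f(Z_n)]$ for an appropriate Lyapunov test function $f$ into almost-sure upper bounds on $Z_n$. In each case I would (a) choose $f$ reflecting the target growth rate, (b) compute the drift of $f(Z_n)$ by Taylor expansion, controlling the remainder via the uniform jumps bound (\ref{z1b}), (c) combine with the hypotheses on $\mu_1$, $\mu_2$, and (d) feed the resulting estimate into the abstract theorem (or, equivalently, perform a direct Markov/Borel--Cantelli argument).

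For part (i), take $f(x) = x^2$. A routine Taylor expansion with (\ref{z1b}) gives
\[
\Exp_n[Z_{n+1}^2 - Z_n^2] = 2 Z_n \mu_1(n;Z_n) + \mu_2(n;Z_n) \leq 2A + C^2
\]
for $n,Z_n$ large, using $x\mu_1(n;x) \leq A$. This is exactly the input required by \cite[Theorem 4.1(i)]{mvw}, yielding the claimed bound.

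For part (ii), take $f(x) = x^p$ with $p \in (1, 1+\kappa)$ chosen close to $1+\kappa$. Taylor expansion gives
\[
\Exp_n[f(Z_{n+1}) - f(Z_n)] = \frac{p Z_n^{p-2}}{2}\bigl[2 Z_n \mu_1(n;Z_n) + (p-1) \mu_2(n;Z_n)\bigr] + O(Z_n^{p-3}) .
\]
Writing $2 Z_n \mu_1 + (p-1)\mu_2 = [2 Z_n \mu_1 + \kappa \mu_2] - (\kappa-p+1)\mu_2$, the hypothesis $2x\mu_1 + \kappa\mu_2 \leq o(1)$ together with $\mu_2 \geq v$ makes this quantity $\leq -v(\kappa-p+1)/2 < 0$ for $n, Z_n$ sufficiently large. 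Hence $f(Z_n)$ has strictly negative drift outside a compact set, and \cite[Theorem 3.2]{mvw} yields $Z_n \leq n^{(1+\eta)/p}$ a.s.\ eventually for any $\eta > 0$; letting $p \to 1+\kappa$ and $\eta \to 0$ appropriately gives $Z_n \leq n^{1/(1+\kappa)+\eps}$.

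For part (iii), the natural Lyapunov function given the strongly negative drift is the exponential $f(x) = \exp(c x^{1-\beta})$ for a small $c > 0$. Since $f'(x) = c(1-\beta) x^{-\beta} f(x)$ and $f''(x) \sim c^2(1-\beta)^2 x^{-2\beta} f(x)$ as $x \to \infty$, Taylor expansion combined with $x^\beta \mu_1 \leq -A$ and $\mu_2 \leq C^2$ (from (\ref{z1b})) gives
\[
\Exp_n[f(Z_{n+1}) - f(Z_n)] \leq c(1-\beta) Z_n^{-2\beta} f(Z_n) \Bigl[ -A + \tfrac{1}{2} c(1-\beta) C^2 + o(1) \Bigr],
\]
which is strictly negative once $c$ is chosen small enough (depending on $A, C, \beta$). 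Then $f(Z_n)$ is a supermartingale outside a compact set, and a standard excursion/optional-stopping argument gives $\Pr[\max_{k \leq n} Z_k > a] \leq Cn/f(a)$. Taking $a = (K \log n)^{1/(1-\beta)}$ with $cK > 2$ makes this summable, and Borel--Cantelli concludes. Part (iii) is the main obstacle: it is flagged as new, and the exponential test function requires care because $f''/f'$ decays only as $x^{-\beta}$ (so the quadratic Taylor contribution is not automatically subleading relative to the linear one), making the explicit choice of $c$ small relative to $A$, $C$, and $1-\beta$ essential to preserve the sign of the drift.
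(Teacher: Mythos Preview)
Your proposal is correct and follows the same strategy as the paper: choose a Lyapunov function suited to the target rate, Taylor-expand using the bounded-jumps assumption (\ref{z1b}), and feed the resulting drift bound into \cite[Theorem 3.2]{mvw}. Parts (i) and (ii) match the paper's argument essentially verbatim.

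In part (iii) there is a small but interesting difference. You take $f(x)=\exp(c\,x^{1-\beta})$ with the \emph{critical} exponent $1-\beta$ and tune the prefactor $c$ small so that the second-order Taylor term $\tfrac12 c(1-\beta)C^2$ is beaten by $A$; as you correctly note, here the linear and quadratic contributions are genuinely of the same order in $Z_n$, so the sign hinges on the choice of $c$. The paper instead takes $f_\alpha(x)=\exp(x^\alpha)$ with $\alpha$ \emph{strictly} less than $1-\beta$: then the quadratic term is $O(Z_n^{2\alpha-2})$ while the linear term is of order $Z_n^{\alpha-1-\beta}$, and $2\alpha-2<\alpha-1-\beta$ makes the former automatically negligible, so no constant needs tuning. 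Both routes yield $\Exp_n[f(Z_{n+1})-f(Z_n)]\leq C$ and hence the claimed $(\log n)^{1/(1-\beta)+\eps}$ bound via \cite[Theorem 3.2]{mvw}; your version is marginally sharper (it gives $(K\log n)^{1/(1-\beta)}$ directly), while the paper's version is slightly cleaner to write down. One caveat: when $\beta=0$ your ``$o(1)$'' remainder is really $O(c^2)$ rather than vanishing in $Z_n$, but this does not affect the conclusion since you are already choosing $c$ small.
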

\begin{proof}
First we prove part (ii). Let $\kappa' = \kappa - \eps$ for $\eps \in (0,\kappa)$. Writing $D_n = Z_{n+1} - Z_n$,
\begin{align*}
\Exp_n [ Z_{n+1}^{1+\kappa'} - Z_n^{1+\kappa'} ] & = Z_n^{1+\kappa'} \Exp_n [ (1 + (D_n/Z_n))^{1+\kappa'} -1 ] \\
& = (1+ \kappa') Z_n^{\kappa'} \left( \mu_1 (n ; Z_n) + \frac{\kappa'}{2 Z_n} \mu_2 (n ; Z_n ) + O( Z_n ^{-2} ) \right) ,
\end{align*}
using Taylor's formula and (\ref{z1b}). Under the conditions of part (ii),
we have
\[ \mu_1 (n ; Z_n) + \frac{\kappa'}{2 Z_n} \mu_2 (n ; Z_n ) + O( Z_n ^{-2} ) \leq - \frac{\eps}{2 Z_n} \mu_2 (n ; Z_n) + o (Z_n^{-1})
< 0 ,\]
for all $n$ and $Z_n$ large enough. Hence $\Exp_n [ Z_{n+1}^{1+\kappa'} - Z_n^{1+\kappa'} ]$
is uniformly bounded above and the result follows from Theorem 3.2 of \cite{mvw}.

It remains to prove part (iii).
For $\alpha >0$, define
$f_\alpha (x) := \exp \{ x^\alpha \}$. First we show that, under
the conditions of the lemma,
  for any $\alpha \in (0,1-\beta)$, for some $C< \infty$,
  \begin{equation}
  \label{fbound}
   \Exp_n [ f_\alpha ( Z_{n+1} ) -  f_\alpha ( Z_n ) ] \leq C, \as .\end{equation}
 Writing $D_n = Z_{n+1} -Z_n$, we have that
 \begin{align*}
 \Exp_n [ f_\alpha (Z_{n+1}) -  f_\alpha (Z_n) ]
 = f_\alpha (Z_n) \Exp_n \left[ \exp \{ ( Z_n + D_n)^\alpha - Z_n^\alpha \} - 1 \right] .\end{align*}
 Since $D_n = O(1)$ a.s., by (\ref{z1b}),   Taylor's formula applied to the last expression yields
  \begin{align*}
 \Exp_n [ f_\alpha (Z_{n+1}) -  f_\alpha (Z_n) ]
 = f_\alpha (Z_n) \Exp_n \left[  \alpha D_n Z_n^{\alpha -1} + O(Z_n^{2\alpha -2} )\right] .\end{align*}
 Here we have that $\Exp_n [ D_n ] \leq  - A Z_n^{-\beta}$
 for   all $Z_n ,n$ large enough.
 Since $\alpha < 1-\beta$ we obtain (\ref{fbound}). Now we can
 apply Theorem 3.2 of \cite{mvw} to complete the proof. \end{proof}

Finally  we complete the proofs of Proposition
\ref{extent2} and  Theorem \ref{extent}.

\begin{proof}[Proof of Proposition \ref{extent2}.]
 Under the conditions of part (i)
of Proposition \ref{extent2}, we have from (\ref{incbound})  and Lemma \ref{lem2}
that the conditions of Lemma \ref{boundslem}(i) hold for $Z_n = \| Y_n \|$.
Thus we obtain part (i) of the proposition. Similarly, under the conditions of part (ii), we have from
(\ref{incbound}), (\ref{lem2eq2}) and (\ref{mom2ex})  that   Lemma \ref{boundslem}(ii) holds for $Z_n = \| Y_n \|$
and $\kappa = - \frac{2\rho}{\sigma^2} - (d-1)$, which is greater than $1$ for $\rho < - d \sigma^2/2$.
Finally,  under the conditions of part (iii), we have from
(\ref{incbound}) and (\ref{lem2eq}) that Lemma \ref{boundslem}(iii) holds for $Z_n = \| Y_n \|$.
 \end{proof}

 \begin{proof}[Proof of Theorem \ref{extent}.]
  Part (i) of the theorem follows from Theorem \ref{dirthm}. Parts (ii), (iii), and (iv) follow
 from Proposition \ref{extent2} with (\ref{gandy})
and the triangle inequality; note this introduces an extra logarithmic factor in the case of part (iv) of the theorem.
\end{proof}

 \section*{Acknowledgements}

 FC was partially supported
by CNRS UMR 7599. MM
thanks the Fondation Sciences Math\'ematiques de Paris for support
during part of this work. Some of this work was done while AW
was at the University of Bristol, supported by the Heilbronn Institute
for Mathematical Research.

\end{document}